\newcommand{\inc}{\hookrightarrow}
\newcommand{\la}{\langle}
\newcommand{\ra}{\rangle}
\newcommand{\x}{\times}
\newcommand{\bd}{\partial}
\newcommand{\ZZ}{\mathbb{Z}}
\newcommand{\CC}{\mathbb{C}}
\newcommand{\CP}{\mathbb{C}P}
\newcommand{\RR}{\mathbb{R}}
\newcommand{\im}{\operatorname{im}}
\newcommand{\U}{\operatorname{U}}
\newcommand{\OO}{\operatorname{O}}   
\newcommand{\Sp}{\operatorname{Sp}}  
\newcommand{\GL}{\operatorname{GL}}
\DeclareMathOperator{\coker}{coker}
\DeclareMathOperator{\Id}{Id}
\DeclareMathOperator{\Diff}{Diff}
\DeclareMathOperator{\End}{End}
\DeclareMathOperator{\vol}{vol}
\DeclareMathOperator{\fix}{fix}
\renewcommand{\a}{\alpha}
\renewcommand{\b}{\beta}
\renewcommand{\d}{\delta}
\newcommand{\g}{\gamma}
\newcommand{\e}{\varepsilon}
\newcommand{\f}{\varphi}
\newcommand{\h}{\theta}
\renewcommand{\l}{\lambda}
\renewcommand{\k}{\kappa}
\newcommand{\n}{\nu}
\renewcommand{\o}{\omega}
\newcommand{\p}{\phi}
\newcommand{\q}{\psi}
\renewcommand{\t}{\tau}
\newcommand{\G}{\Gamma}
\renewcommand{\O}{\Omega}
\renewcommand{\S}{\Sigma}
\newcommand{\D}{\Delta}
\renewcommand{\L}{\Lambda}
\newcommand{\ii}{\mathrm{i}}
\newcommand{\rId}{\mathrm{Id}} 
\newcommand{\frj}{\mathfrak{j}} 
\newcommand{\cA}{\mathcal{A}} 
\newcommand{\cV}{\mathcal{V}} 
\newcommand{\cH}{\mathcal{H}} 
\newtheorem{theorem}{Theorem}
\newtheorem{proposition}[theorem]{Proposition}
\newtheorem{lemma}[theorem]{Lemma}
\newtheorem{definition}[theorem]{Definition}
\newtheorem{corollary}[theorem]{Corollary}
\newtheorem{remark}[theorem]{Remark}
\newtheorem{remarks}[theorem]{Remarks}
\title{Resolution of $4$-dimensional symplectic orbifolds}
\author[L. Mart\' in-Merch\' an]{Luc\'ia  Mart\' in-Merch\' an}
\address{Facultad de Ciencias Matem\'aticas, Universidad
Complutense de Madrid, Plaza de Ciencias 3, 28040 Madrid, Spain}
\email{lmmerchan@ucm.es}
\author[J. Rojo]{Juan Rojo}
\address{ETS Ingenieros Inform\'aticos, Universidad Polit\'ecnica de Madrid,
Campus de Montegancedo, 28660, Madrid, Spain}
\email{juan.rojo.carulli@upm.es}
\subjclass[2010]{53C25, 53D35, 57R17}
\keywords{Symplectic Orbifold}
\begin{document}

\begin{abstract}
We give a method to resolve $4$-dimensional symplectic orbifolds
making use of techniques from complex geometry and gluing of symplectic forms.
We provide some examples to which the resolution method applies.
\end{abstract}

\maketitle


\section{Introduction}

An orbifold is a space which is locally modelled on balls of $\RR^n$ quotient by a finite group.
These have been very useful in many geometrical contexts \cite{Th}. In the setting of 
symplectic geometry, symplectic orbifolds have been introduced mainly as a way to
construct symplectic manifolds by resolving their singularities.
The problem of resolution of singularities and blow-up in the symplectic setting 
was posed by Gromov in \cite{G}. Few years later, the symplectic blow-up was
rigorously defined by McDuff \cite{McD} and it was used to construct a simply-connected
symplectic manifold with no K\"ahler structure.
The concept of symplectic blow-up was later on generalized to the orbifold setting in \cite{God}.

McCarthy and Wolfson developed in \cite{MW} a symplectic resolution for isolated 
singularities of orbifolds in dimension $4$.
Later on, Cavalcanti, Fern\'andez and Mu\~noz gave a method of performing 
symplectic resolution of \emph{isolated} orbifold singularities in all dimensions \cite{CFM}. This was
used in \cite{FM} to give the first example of a simply-connected symplectic $8$-manifold
which is non-formal, as the resolution of a suitable symplectic $8$-orbifold. This manifold
was proved to have also a complex structure in \cite{BM}.

Bazzoni, Fern\'andez and Mu\~noz \cite{BFM} have given the first construction of a 
symplectic resolution of an orbifold of dimension $6$ with isotropy sets of dimension $0$ and
$2$, although the construction is ad hoc for the particular example at hand as it satisfies that
the normal bundle to the $2$-dimensional isotropy set is trivial. This was used to give
the first example of a simply-connected non-K\"ahler manifold which is simultaneously complex
and symplectic.

Niederkr\"uger and Pasquotto provided methods 
for resolving  different types of symplectic
orbifold singularities in \cite{NP1,NP2}. The first deals with orbifolds
arising as symplectic reductions of Hamiltonian circle actions; these singularities are cyclic and
might not be isolated.
In dimension $4$, the previous work in \cite{MR} serves to resolve symplectic $4$-orbifolds
whose isotropy set consists of codimension $2$ \emph{disjoint} submanifolds. 
In such case the orbifold is topologically a manifold
(the isotropy points are non-singular),
so the question only amounts to change the orbifold symplectic form into a smooth symplectic form.

In this paper we give an elementary and self-contained method to resolve arbitrary symplectic $4$-orbifolds.
For the symplectic part, we make use of techniques for gluing symplectic forms.
These include the so called \emph{inflation procedure} introduced by Thurston in \cite{Th},
and the notion of \emph{positivity} (or tameness) with respect to an almost complex structure,
studied in detail in the book \cite{MS}.
For the topological part (the resolution of quotient singularities), 
we mainly make use of complex local models from \cite{CFM}, and tools coming from Invariant Theory.
There is however an essential difficulty when dealing with 
non-isolated isotropy points; this comes from the fact
that the (local) resolution of the topologically-singular points must be made compatible with the resolution
of the isotropy divisors (real codimension $2$) of the orbifold.
To overcome this difficulty, the desingularization of the isotropy divisors has to be made with care. 
The method in \cite{MRT} starts with a manifold and constructs on it an orbifold atlas
with isotropy along a configuration of divisors. 
This construction has to be reversed, but with an essential change:
mainly, that the orbifold and the manifold structures along the divisors must be related through
an \emph{holomorphic} map.

The main result is:

\begin{theorem} \label{thm:main-thm}
Let $(X, \o)$ be a compact symplectic $4$-orbifold.
There exists a symplectic manifold $(\tilde{X}, \tilde{\o})$ and a smooth map 
$\pi: (\tilde{X},\tilde{\o}) \to (X,\o)$
which is a symplectomorphism outside an arbitrarily small neighborhood of the isotropy set of $X$.
\end{theorem}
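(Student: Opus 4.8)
The plan is to analyze the local structure of the isotropy set, resolve its two types of singular behaviour separately using complex models, and then glue the resulting symplectic forms together by an inflation argument controlled by $J$-positivity. First I would fix a compatible almost complex structure $J$ and an orbifold Riemannian metric so that at every point $p$ the isotropy group $\G_p$ embeds in $\U(2)$. This exhibits the isotropy set as a union of two kinds of strata. Along the \emph{isotropy divisors} the group $\G_p$ fixes a complex line, so locally the model is $\CC^2/\ZZ_m$ with the action $(z_1,z_2)\mapsto(z_1,\z z_2)$, $\z=e^{2\pi\ii/m}$; here $X$ is topologically smooth and the quotient identification is $w=z_2^m$, but the orbifold symplectic form is singular in the normal coordinate. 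At the remaining finitely many \emph{isolated} points $\G_p$ fixes only the origin, giving a genuine surface singularity $\CC^2/\G$ with $\G\subset\U(2)$ finite; these are either intersection points of two divisors or isolated quotient singularities.

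Next I would resolve the isolated points complex-analytically. For each such point I take the complex local model from \cite{CFM} and a fixed resolution $\rho:Y\too\CC^2/\G$ of the quotient singularity: $Y$ is a smooth complex surface carrying a K\"ahler form, whose exceptional set is a configuration of rational curves and which agrees with $\CC^2/\G$ away from that set. When the singular point lies on one or two isotropy divisors, I keep track of the proper transforms of those divisors in $Y$, so that the local point-resolution and the divisor-resolution can later be matched.

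Then I would treat the divisors, and this is the delicate step. Along each isotropy divisor the topological space is already a manifold, so only the symplectic form must be changed. I would reverse the construction of \cite{MRT}: rather than producing an orbifold atlas on a manifold, I start from the orbifold and build the smooth structure through the \emph{holomorphic} normal-coordinate change $w=z_2^m$, and replace the orbifold form on a tubular neighborhood of the divisor by a smooth, $J$-positive symplectic form that interpolates back to $\o$ near the boundary. The holomorphic character of the coordinate change is exactly what makes this local modification compatible, at the points where divisors meet the exceptional curves of the isolated resolutions, with the complex coordinates used in the previous step.

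Finally I would assemble $\tilde{X}$ by excising small neighborhoods of the isotropy set and gluing in the resolved divisor neighborhoods and resolved point neighborhoods along their common collar regions via these holomorphic identifications. The closed $2$-forms produced upstairs will not agree on the nose, so I would use Thurston's inflation procedure \cite{Th} to add a large multiple of a form dual to the exceptional and divisor classes, together with a cutoff function interpolating between the glued forms and $\o$, checking tameness with respect to $J$ throughout (as in \cite{MS}) so that the interpolated closed form remains nondegenerate. Shrinking the neighborhoods makes the region where $\pi$ fails to be a symplectomorphism arbitrarily small. I expect the main obstacle to be the \emph{simultaneous} compatibility of the divisor and point resolutions: arranging the holomorphic matching so that the inflation can be carried out near the exceptional curves, where several resolved pieces overlap, without destroying the nondegeneracy of $\tilde{\o}$.
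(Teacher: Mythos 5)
Your outline contains the right ingredients (complex local models, holomorphic compatibility between the two kinds of resolution, $J$-tameness as the gluing mechanism), but it leaves the central difficulty unresolved, and one step would fail as stated. At a point $p \in \S^1$ (a crossing of isotropy divisors) the group $\G_p$ does \emph{not} act freely on the punctured ball, so $p$ is not an isolated orbifold singularity and the resolution of \cite{CFM} cannot be invoked there as you describe: that method interpolates $b^*\o_0$ with a small multiple of the K\"ahler form of the resolution over an annulus which must be a smooth manifold carrying smooth closed forms with vanishing $H^2$, whereas here the annulus contains the isotropy divisors, along which the pushed-forward orbifold form is not smooth in manifold coordinates (it has $|w|^{2/m}$-type behaviour). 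So ``resolve the points first, keeping the proper transforms'' needs either to be carried out entirely in the orbifold category (producing a new orbifold whose isotropy is a union of disjoint curves, to which \cite{MR} could then be applied), or to be replaced by a different device; your proposal does neither explicitly. Relatedly, inflation by ``a large multiple of a form dual to the exceptional and divisor classes'' is not available: exceptional classes have negative self-intersection and cannot be represented by $\frj$-semipositive forms; the working gluing adds a \emph{small} multiple of an exact form supported near the exceptional set.

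The paper resolves the compatibility problem by reversing your order and supplying precisely the missing mechanism. First the divisors are desingularized globally (fiberwise $w \mapsto w^m$, with the form near each surface taken to be $\pi^*\o_S - \tfrac{1}{4} dJ_{\cV} d(r^{2m}+a^2)^{1/m}$, chosen exactly so that its pushforward is smooth in the new coordinates and stays tamed). Then — and this is the key step your sketch lacks — near each $p \in \S^1$ this form is shown to admit a K\"ahler potential $F_a$, which is composed with a convex nondecreasing function $h$ vanishing near $0$; the resulting form $\tfrac{\ii}{2}\bd\bar\bd(h\circ F_a)$ is $\frj$-semipositive, agrees with the desingularized form away from $p$, and \emph{vanishes identically on a neighborhood of $p$}. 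Because the form is zero near $\S^1$, the change of smooth structure at $p$ (via Chevalley's theorem on invariant polynomials and the Riemann extension theorem, which turns $p$ into an honest isolated singularity or a smooth point compatibly with the manifold structure along the divisors) requires no matching of symplectic forms whatsoever; one then glues in the standard form using semipositivity and finishes by applying \cite{CFM} to genuine isolated singularities. Without this flattening-to-zero, or an equivalent decoupling device, your final step is exactly where the proof is missing: nondegeneracy of a cutoff interpolation in the region where exceptional curves meet the desingularized divisors is the thing that must be proved, not assumed.
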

Actually, the compactness hypothesis in the above Theorem can be relaxed: it suffices
that every connected component $S \subset X$ of the set of isotropy surfaces has compact closure $\bar{S}$ in $X$.

In addition, Theorem \ref{thm:main-thm} can be used to construct a $4$-dimensional simply connected symplectic manifold
as the symplectic resolution of a suitable $4$-orbifold. 
This symplectic orbifold is a quotient of a K\" ahler manifold $M_\g(\S_2) \times S^1$ 
by an action of $\ZZ_2 \x \ZZ_2$, where $M_\g(\S_2)$ is a non-trivial mapping torus of the genus $2$ surface. 
The isotropy set of the action consists of $8$ isolated points and $3$ tori that have $4$ intersection points, 
so this symplectic orbifold cannot be resolved with the methods of \cite{CFM,MR}.

In the recent paper \cite{C}, it is given an alternative method for resolving arbitrary symplectic $4$-orbifolds.
The techniques used in \cite{C} (e.g. symplectic reduction and symplectic fillings)
differ completely from ours and are technically more involved.

This paper is organized as follows. In section \ref{sec:sym-orb} we review the necessary preliminaries on symplectic orbifolds. Section \ref{sec:sym-orb-4} studies the isotropy set of $4$-dimensional symplectic orbifolds, giving special local models for the isotropy surfaces. 
With these tools at hand we prove Theorem \ref{thm:main-thm} in section \ref{sec:resolution}.
Finally, in section \ref{sec:examples} we provide some examples in which the symplectic resolution
of Theorem \ref{thm:main-thm} applies.

\noindent\textbf{Acknowledgements.} We are grateful to Vicente Mu\~noz and Giovanni Bazzoni for useful conversations.
The first author acknowledges financial support by a FPU Grant(FPU16/03475). 

\section{Symplectic orbifolds} \label{sec:sym-orb}
In this section we introduce some aspects about orbifolds and symplectic orbifolds, which can be found in \cite{MR}.

\subsection{Orbifolds}

\begin{definition}\label{def:orbifold}
An $n$-dimensional orbifold is a Hausdorff and second countable space $X$ 
endowed with an atlas $\{(U_{\a},V_\a, \phi_{\a},\G_{\a})\}$, 
where $\{V_\a\}$ is an open cover of $X$,
$U_\a \subset\RR^n$, $\G_{\a} < \Diff(U_\a)$ is a finite group acting by diffeomorphisms, and 
$\phi_{\a}:U_{\a} \to V_{\a} \subset X$ is a $\G_{\a}$-invariant map which induces a 
homeomorphism $U_{\a}/\G_\a \cong V_{\a}$. 

There is a condition of compatibility of charts for intersections.
For each point $x \in V_{\a} \cap V_{\b}$ there is some $V_\d \subset V_{\a} \cap V_{\b}$ 
with $x \in V_\d$ so that there are group monomorphisms $\rho_{\d \a}: \G_\d \inc \G_\a$,
$\rho_{\d \b}: \G_\d \inc \G_\b$, and open differentiable embeddings $\imath_{\d \a}: U_\d \to U_\a$, 
$\imath_{\d \b}: U_\d \to U_\b$, which satisfy 
$\imath_{\d \a}(\g(x))=\rho_{\d \a}(\g)(\imath_{\d \a}(x))$ and 
$\imath_{\d \b}(\g(x)) = \rho_{\d \b}(\g)(\imath_{\d \b}(x))$, for all $\g \in \G_\d$. 
\end{definition}

The concept of change of charts in orbifolds is borrowed from its analogue in manifolds.
\begin{definition} \label{def:change of charts}
For an orbifold $X$, a \emph{change of charts} is the map 
 $$
\psi^{\d}_{\a \b}= \imath_{\d \b} \circ \imath_{\d \a}^{-1}:\imath_{\d \a}(U_\d)
\to \imath_{\d \b}(U_\d).
 $$
\end{definition}
Note that $\imath_{\d \a}(U_\d) \subset U_\a$ and $\imath_{\d \b}(U_\d) \subset U_\b$,
so $\psi^{\d}_{\a \b}$ is a change of charts from $U_\a$ to $U_\b$.
Clearly a change of charts between $U_\a$ and $U_\b$ depends on the 
inclusion of a third chart $U_\d$. This dependence is up to the action of an element in $\G_\d$. 
In general this dependence is irrelevant, so
we may abuse notation and write $\psi_{\a \b}$ for any change of chart between $U_\a$ and $U_\b$.
We may further abuse notation and write
$$
\psi_{\a \b}: U_\a \to U_\b
$$
for a change of charts as above, even though its domain and range do not equal in general
all $U_\a$ and $U_\b$ but an open subset of them.

\medskip
 
We can refine the atlas of an orbifold $X$ in order to obtain better properties; given a point $x \in X$, 
there is a chart $(U,V,\phi,\G)$ with $U \subset \RR^n$, $U/\G \cong V$,
so that the preimage $\phi^{-1}(\{x\})= \{u\}$ is only a point,
and the group $\G$ acting on $U$ leaves the point $u$ fixed, 
i.e. $\g(u)=u$ for all $\g \in \G$. We call $\G$ the \emph{isotropy group} at $x$, 
and we denote it by $\G_x$. This group is well defined up to conjugation by a diffeomorphism
of a small open set of $\RR^n$.
In addition, using a $\G_x$-invariant metric and the exponential chart one can prove:

\begin{proposition}
Around any point $x \in X$ 
there exists an orbifold chart $(U,V,\p,\G)$ with $\G_x=\G < \OO(n)$.
\end{proposition}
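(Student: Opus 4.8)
The plan is to start from the refined chart $(U,V,\p,\G)$ described just above the statement, in which $\p^{-1}(\{x\})=\{u\}$ is a single point and $\g(u)=u$ for every $\g\in\G$, and to replace the coordinates on $U$ by geodesic normal coordinates for a suitably chosen metric.

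First I would equip $U$ with an arbitrary Riemannian metric $g_0$ and average it over the finite group $\G$, setting
$$
g=\frac{1}{|\G|}\sum_{\g\in\G}\g^* g_0.
$$
Because $\G$ is finite this sum is well defined and yields a genuine Riemannian metric for which every $\g\in\G$ acts as an isometry; moreover each such $\g$ fixes the point $u$.

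Next I would pass to the exponential chart at $u$. Since each $\g$ is an isometry fixing $u$, it carries $g$-geodesics through $u$ to $g$-geodesics through $u$, and hence intertwines the exponential map with its own differential at $u$:
$$
\exp_u\big( (d\g)_u(v)\big)=\g\big(\exp_u(v)\big)
$$
for $v$ in a small ball of $T_u U$. Choosing $r$ smaller than the injectivity radius at $u$, the geodesic ball $B(u,r)$ is $\G$-invariant, being preserved by the isometries fixing $u$, and $\exp_u$ restricts to a diffeomorphism from the ball $B_r(0)\subset T_u U$ onto $B(u,r)$. Identifying $T_u U\cong\RR^n$ via a $g_u$-orthonormal basis turns $\exp_u$ into a new chart on $\tilde U=B_r(0)$ on which $\G$ acts through the linear representation $\g\mapsto (d\g)_u$.

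Finally, the key point is that this linear action is orthogonal: since every $\g$ preserves $g$, its differential $(d\g)_u$ preserves the inner product $g_u$ on $T_u U$, so in the chosen $g_u$-orthonormal basis the matrices $(d\g)_u$ lie in $\OO(n)$. This furnishes a chart $(\tilde U,\tilde V,\tilde\p,\G)$ with $\G=\G_x<\OO(n)$, as required. I expect the only genuinely delicate step to be the verification of the equivariance formula above together with the $\G$-invariance of the geodesic ball, both of which rest on the observation that isometries fixing $u$ commute with $\exp_u$ through their linearizations; by contrast the averaging of the metric and the orthogonality of the linearized action are routine.
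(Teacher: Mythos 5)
Your proof is correct and follows exactly the route the paper indicates: the paper's entire justification is the remark that the result follows ``using a $\G_x$-invariant metric and the exponential chart,'' which is precisely your averaging argument plus the equivariance $\exp_u\circ (d\g)_u=\g\circ\exp_u$ and the orthogonality of $(d\g)_u$ in a $g_u$-orthonormal basis. You have simply supplied the details the paper leaves implicit, and all the steps (invariance of the averaged metric, $\G$-invariance of the geodesic ball, linearization into $\OO(n)$) are carried out correctly.
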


\begin{definition}
The \emph{isotropy subset} of $X$ is 
$
\S=\{x\in X \mbox{ s.t. } \G_x\neq \emptyset\}.
$
\end{definition}

As we shall see, the isotropy set is stratified into suborbifolds; 
this notion is also similar to the concept of a submanifold:

\begin{definition} \label{def:suborbifold}
Let $X$ be an orbifold of dimension $n$.
A \emph{suborbifold of dimension $d$} or \emph{d-suborbifold} of $X$ is defined to be a subspace $Y \subset X$ such that
for each $p \in Y$ there exists an orbifold chart $(U,V,\phi,\G)$ of $X$ around $p$
with $\G < \OO(n)$, $\phi(p)=0$, and such that $U' = U \cap (\RR^d \x \{0\})$ satisfies $\phi(U')=Y \cap V$.
\end{definition}

Let $Y \subset X$ be a suborbifold. Then $Y$ has a structure of orbifold inherited from $X$, as follows.
Consider the chart $(U,V,\phi,\G)$ of the above definition and let us identify
$\RR^d \cong \RR^d \x \{0\} \subset \RR^n$. Consider $\tilde{\G}=\{ \g \in \G \mbox{ s.t.} \g(\RR^d) \subset \RR^d\} < \G$
the subgroup of elements leaving invariant $\RR^d$. Consider the representation given by
$\varrho:\widetilde{\G} \to \End(\RR^d)$; its image is a subgroup 
$\G' = \im(\varrho) \cong \widetilde{\G}/\ker (\varrho)$. 
Let us denote $V'=Y \cap V=\phi(U')$, and $\p'=\p|_{U'}:U' \to V'$.
The orbifold chart of $Y$ around $p$ is defined to be $(U',V',\p',\G')$.
Clearly, $U'$ is a $\G'$-invariant set and satisfies $U'/\G' \cong Y \cap V$.

Let us state a notion of equivalence between groups of diffeomorphisms that is useful for orbifolds.
\begin{definition}
Let $H < \Diff(U)$, $H' < \Diff(U')$ 
be two groups of diffeomorphisms of open sets $U, U'$ of $\RR^{2n}$.
We say that the germs $(U,H)$ and $(U',H')$ are \emph{equivalent} if there exists
a diffeomorphism $f \colon U \to U'$ such that $f \circ H \circ f^{-1} = H'$.
In this case we write $(U,H) \cong (U',H')$.
\end{definition}
Note that the above gives an equivalence relation in the set of germs 
of diffeomorphisms of $\RR^{2n}$. 
If $(U,V,\G,\p)$ is an orbifold chart, 
a diffeomorphism $f \colon U \to U'$ gives an induced orbifold chart
$(U',V, \G', \p')$, where $\G'=f \circ \G \circ f^{-1}$ and $\p'=\p \circ f^{-1}$.
Hence, all the germs $(U',\G')$ equivalent to $(U,\G)$ induce the same
orbifold chart.  We shall also specify this notion for soubgroups of $\OO(n)$.

\begin{definition}
Two subgroups $\G, \G'$ of $\OO(n)$
are \emph{equivalent} if there exists open sets $U, U' \subset \RR^n$ containing $0$
such that the germs $(U,\G)$ and $(U',\G')$ are equivalent.
We denote $\G \cong \G'$ in this case.
\end{definition}

\begin{proposition} \cite[Proposition 4]{MR} \label{prop:isotropy set}
Let $X$ be an orbifold, and let $\S$ be its isotropy subset. For every equivalence class $H$ of finite subgroup $H<\OO(n)$, we can define the set
$$ 
\S_H= \{ x\in X \mbox{ s.t. } \G_x \cong H\}.
$$
Then the closure $\overline{\S}_H$ is a suborbifold of $X$, and 
$\S_H= \overline{\S}_H - \bigcup_{H<H'} \S_{H'}$ is a submanifold of $X$.
\end{proposition}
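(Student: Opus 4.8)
The plan is to analyze the local structure of the isotropy set chart-by-chart and then glue the local conclusions using the compatibility of charts. Fix an equivalence class $H$ of finite subgroups of $\OO(n)$. The key observation is that membership in $\S_H$ and the fixed-point structure of the isotropy groups are local notions that can be read off in a single orbifold chart $(U,V,\p,\G)$ with $\G<\OO(n)$ and $\p(p)=0$. So first I would reduce to a local statement: given such a chart, describe $\S_H \cap V$ and $\overline{\S}_H \cap V$ inside $U/\G$ in a way that exhibits the suborbifold and submanifold structure required by Definition \ref{def:suborbifold}.

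First I would establish the local model. For a linear action of a finite group $\G<\OO(n)$ on $U\subset\RR^n$, the isotropy group $\G_u$ of a point $u\in U$ is the stabilizer $\{\g\in\G : \g(u)=u\}$, and its fixed-point set is the linear subspace $\mathrm{Fix}(\G_u)=\bigcap_{\g\in\G_u}\ker(\g-\id)$. The central fact from the linear theory of finite group actions is that $U$ decomposes according to stabilizer type, and for each subgroup $K<\G$ the set $U^K=\{u : \G_u = K\}$ is the complement, in the linear subspace $\mathrm{Fix}(K)$, of the union of the strictly larger fixed subspaces $\mathrm{Fix}(K')$ for $K<K'$. Thus $\mathrm{Fix}(K)$ is a linear subspace — hence a smooth submanifold of $\RR^n$ of the form $\RR^d\x\{0\}$ after a change of coordinates by an element of $\OO(n)$ — and $U^K$ is that subspace minus a finite union of proper linear subspaces. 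Projecting by $\p$, this is exactly the data required: $\overline{\S}_H\cap V$ corresponds to $\p(\mathrm{Fix}(K))$, giving the suborbifold chart, while $\S_H\cap V$ removes the higher-isotropy strata, leaving a submanifold.

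Next I would handle the passage from points of a fixed stabilizer $K$ to the equivalence class $H$. Within one chart the relevant subgroups $K$ with $K\cong H$ form finitely many conjugacy classes in $\G$ (since $\G$ is finite), and $\S_H\cap V$ is the union of the images $\p(U^K)$ over those $K$. Because the fixed subspaces $\mathrm{Fix}(K)$ for conjugate $K$ are permuted by $\G$ and identified under $\p$, this union descends to a well-defined suborbifold piece in $U/\G\cong V$. The suborbifold structure on $\overline{\S}_H$ is then obtained by taking $U'=U\cap(\RR^d\x\{0\})$ with $\RR^d=\mathrm{Fix}(K)$, matching Definition \ref{def:suborbifold} verbatim. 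To see that $\S_H$ itself is a genuine (non-singular) manifold, I would note that on $\S_H$ the isotropy is constant equal to $H$, so $H$ acts trivially on the normal directions along $\mathrm{Fix}(K)$ in the relevant locus, and the quotient chart for $\S_H$ carries the trivial group, yielding smooth manifold charts.

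The main obstacle will be the gluing step: verifying that the locally defined submanifold and suborbifold structures are compatible across overlapping charts, so that $\S_H$ and $\overline{\S}_H$ are globally well-defined subspaces with consistent structures. Here I would invoke the change-of-charts maps $\psi_{\a\b}$ from Definition \ref{def:change of charts}: since these are diffeomorphisms intertwining the group actions via the monomorphisms $\rho_{\d\a},\rho_{\d\b}$, they carry fixed-point subspaces to fixed-point subspaces and preserve stabilizer type up to the equivalence $\cong$. Consequently $\S_H$ and $\overline{\S}_H$ are preserved, and the local $\RR^d\x\{0\}$ models glue to a global suborbifold. I would also need to confirm that the closure operation commutes with the chartwise description — that is, that $\overline{\S}_H$ locally equals $\p(\mathrm{Fix}(K))$ rather than something larger — which follows because the higher strata $\S_{H'}$ with $H<H'$ are contained in $\mathrm{Fix}(K)$, so adjoining them back recovers the full linear subspace. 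This last point is essentially the statement $\S_H=\overline{\S}_H-\bigcup_{H<H'}\S_{H'}$, which I would verify directly from the linear stratification described above.
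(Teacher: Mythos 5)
Your overall strategy (work in a linear chart $(U,V,\phi,\Gamma)$ with $\Gamma<\OO(n)$, stratify $U$ by exact stabilizers, observe that $\mathrm{Fix}(K)$ is a linear subspace and that the exact-stabilizer locus $U^K$ is its complement of finitely many proper subspaces, then glue using the equivariance of the change-of-chart maps) is the right one, and it is the standard route; note the paper itself does not reprove this statement but quotes it from \cite[Proposition 4]{MR}. The set-theoretic identity $\Sigma_H=\overline{\Sigma}_H-\bigcup_{H<H'}\Sigma_{H'}$ and the suborbifold structure of $\overline{\Sigma}_H$ come out of this local picture essentially as you describe (modulo the degenerate case where every point of $\mathrm{Fix}(K)$ has strictly larger stabilizer, i.e.\ $U^K=\emptyset$; such $K$ must simply be discarded from the local model, otherwise ``$U^K$ is dense in $\mathrm{Fix}(K)$'' fails).

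There is, however, a genuine error in your justification that $\Sigma_H$ is a submanifold. You assert that ``$H$ acts trivially on the normal directions along $\mathrm{Fix}(K)$,'' and this is exactly backwards: since $\mathrm{Fix}(K)$ is by definition the full fixed subspace, $K\cong H$ acts on the orthogonal complement of $\mathrm{Fix}(K)$ with zero fixed subspace, and if it acted trivially there it would be the trivial group. What is true, and what the argument actually needs, is that $H$ acts trivially on the \emph{tangent} directions, i.e.\ on $\mathrm{Fix}(K)$ itself, so $H$ lies in the kernel of the restriction homomorphism $\varrho\colon\widetilde{\Gamma}\to\End(\RR^d)$ that defines the induced structure group $\Gamma'$ of the suborbifold chart (Definition \ref{def:suborbifold} and the paragraph after it). Concretely: at a point $p\in\Sigma_H$ take a chart with $\Gamma=\Gamma_p\cong H$; then every element of $\Gamma$ restricts to the identity on $\mathrm{Fix}(\Gamma)$, so $\Gamma'=\{1\}$ and $\phi|_{U\cap\mathrm{Fix}(\Gamma)}$ is injective, which gives the manifold chart for $\Sigma_H$ around $p$. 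At points of $\overline{\Sigma}_H-\Sigma_H$ the induced group $\Gamma'$ is in general \emph{not} trivial; there one needs instead that $\Gamma'$ acts \emph{freely} on the exact-stabilizer locus $U^K$ (any $\gamma\in\widetilde{\Gamma}$ fixing $u\in U^K$ lies in $\Gamma_u=K\subset\ker\varrho$), which is why $\overline{\Sigma}_H$ is only an orbifold while its open stratum $\Sigma_H$ is a manifold. Your sentence, as written, both states a false fact and conflates these two situations; with the correction above the proof goes through.
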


\begin{definition}
An orbifold function $f \colon X\to \RR$ is a continuous function such that $f\circ \phi_\a \colon U_\a \to \RR$ is smooth
for every $\a$. 
\end{definition}

Note that this is equivalent to giving smooth functions $f_\a$ on $U_\a$ which are $\G_\a$-equivariant
and which agree under the changes of charts.
An \emph{orbifold partition of unity subordinated to the open cover $\{V_\a\}$ of $X$} consists of orbifold functions
$\rho_\a \colon X \to [0,1]$ such that
the support of $\rho_\a$ lies inside $V_\a$ and the sum $\sum_\a \rho_\a \equiv 1$ on $X$.

\begin{proposition} \label{prop:partition unity}  \cite[Proposition 5]{MR}
Let $X$ be an $n$-orbifold. For any sufficiently refined open cover $\{V_\a\}$ of $X$ there exists an
orbifold partition of unity subordinated to $\{V_\a\}$.
\end{proposition}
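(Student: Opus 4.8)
The plan is to follow the classical construction of partitions of unity on smooth manifolds, the only genuinely new ingredient being an averaging step over the isotropy groups that turns ordinary bump functions into \emph{invariant} ones, so that they descend to orbifold functions. Since each $V_\a$ is homeomorphic to $U_\a/\G_\a$ with $U_\a\subset\RR^n$ open, the space $X$ is locally compact; being also Hausdorff and second countable, it is paracompact. First I would pass to a locally finite refinement $\{W_j\}$ of $\{V_\a\}$, where each $\overline{W_j}$ is compact and contained in some chart domain $V_{\a(j)}$, and then apply the shrinking lemma to obtain a further open cover $\{W'_j\}$ with $\overline{W'_j}\subset W_j$.

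Next I work inside a fixed chart $(U_{\a(j)},V_{\a(j)},\phi_{\a(j)},\G_{\a(j)})$, writing $\G=\G_{\a(j)}$ and $\phi=\phi_{\a(j)}$ for brevity. The preimage $\phi^{-1}(W'_j)\subset U_{\a(j)}$ is a $\G$-invariant open set whose closure is compact and contained in $\phi^{-1}(W_j)$, so I can choose a smooth function $\mu_j\colon U_{\a(j)}\to[0,1]$ equal to $1$ on $\phi^{-1}(W'_j)$ and supported in a compact subset of $\phi^{-1}(W_j)$. Averaging over the group,
\[
\lambda_j=\frac{1}{|\G|}\sum_{\g\in\G}\mu_j\circ\g ,
\]
gives a smooth $\G$-invariant function that still equals $1$ on the invariant set $\phi^{-1}(W'_j)$ and whose support remains compact inside $\phi^{-1}(W_j)$. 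By invariance $\lambda_j$ descends to a function on $V_{\a(j)}$, and because its support is compactly contained there, extension by zero defines an orbifold function $\hat\lambda_j\colon X\to[0,1]$.

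Finally I would set $\sigma=\sum_j\hat\lambda_j$, which is a well-defined orbifold function thanks to local finiteness, and which satisfies $\sigma\ge 1$ everywhere since $\hat\lambda_j\equiv 1$ on $W'_j$ and the sets $\{W'_j\}$ cover $X$. Then $\eta_j=\hat\lambda_j/\sigma$ is an orbifold partition of unity subordinate to $\{W_j\}$, and regrouping by the choice function, $\rho_\a=\sum_{j:\,\a(j)=\a}\eta_j$, yields one subordinate to the original cover $\{V_\a\}$. The step I expect to demand the most care is checking that each $\hat\lambda_j$ is a bona fide orbifold function, i.e. that its expression in every \emph{other} chart $U_\b$ is smooth, $\G_\b$-equivariant, and compatible with the changes of charts $\psi_{\a\b}$. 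Here the two features built into $\lambda_j$ are exactly what is needed: the compact containment of its support guarantees that the zero-extension is smooth across chart overlaps, while the $\G$-invariance guarantees that the locally defined pieces agree after the change of charts.
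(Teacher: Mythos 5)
Your proof is correct and follows exactly the expected route: the paper itself gives no proof of this proposition (it is quoted from \cite[Proposition 5]{MR}), and the argument there is precisely the one you give — paracompactness from local compactness and second countability, bump functions constructed in charts, averaging over the finite isotropy groups to obtain $\G_\a$-invariance so the functions descend and extend by zero, and normalization by the locally finite sum. The two points you flag as delicate (compact containment of supports for the zero-extension, invariance for compatibility under changes of charts) are indeed the only orbifold-specific issues, and your treatment of them is sound.
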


Orbifold tensors are defined in the same way as functions are. 
That is, an orbifold tensor on $X$ is a collection of 
$\G_\a$-invariant tensors on each $U_\a$ which agree under the changes of charts. 
In particular, there is a notion of orbifold differential forms $\Omega_{orb}(X)$ and
the exterior derivative is also well-defined.

\subsection{Symplectic orbifolds}

\begin{definition}
A symplectic orbifold is an orbifold $X$ equipped with and orbifold $2$-form $\o \in \O^2_{orb}(X)$ such that $d\o=0$ and $\o^n>0$ where $2n=\dim(X)$.
\end{definition}

The proof of the existence of an almost K\" ahler structure on a manifold (see \cite{Silva}) easily carries over to the orbifold case: 

\begin{proposition} \cite[Proposition 8]{MR}\label{prop:almost-Kahler}
Let $(X,\o)$ be a symplectic orbifold. Then $(X,\o)$ admits 
an almost K\"ahler orbifold structure $(X, \o, J , g)$.
\end{proposition}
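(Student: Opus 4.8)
The plan is to mirror the classical manifold construction of a compatible almost complex structure, applied chart-by-chart and then glued by an orbifold partition of unity from Proposition \ref{prop:partition unity}. The key observation is that all the pointwise linear-algebra steps of the manifold case are equivariant, so they descend to the orbifold setting provided we work with $\G_\a$-invariant data on each local uniformizing chart $U_\a$.

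First I would build a Riemannian orbifold metric $g_0$. On each chart $U_\a$ one takes any metric and averages it over the finite group $\G_\a$ to obtain a $\G_\a$-invariant metric; then one patches these together using the orbifold partition of unity $\{\rho_\a\}$, so that $g_0 = \sum_\a \rho_\a g_0^\a$ is a genuine orbifold metric (the partition of unity preserves positive-definiteness since it is a convex combination). Next, on each chart the orbifold symplectic form $\o$ and the metric $g_0$ determine a $\G_\a$-equivariant bundle endomorphism $A$ via $\o(u,v) = g_0(Au,v)$; since $\o$ and $g_0$ are both $\G_\a$-invariant, $A$ is $\G_\a$-equivariant. The operator $A$ is invertible and skew-adjoint with respect to $g_0$, so its polar decomposition $A = J\sqrt{A^*A}$ singles out $J = (\sqrt{A^*A})^{-1}A$, and one checks in the standard way that $J^2 = -\id$. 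Because polar decomposition is canonical and commutes with the $\G_\a$-action (conjugating $A$ by an orthogonal $\g$ conjugates $\sqrt{A^*A}$ the same way), the resulting $J$ is again $\G_\a$-equivariant on each chart, and canonicity forces agreement under the changes of charts $\psi_{\a\b}$; hence $J$ is a globally defined orbifold almost complex structure. Finally one defines $g(u,v) = \o(u, Jv)$ and verifies it is a symmetric, positive-definite, $J$-compatible orbifold metric, so that $(X,\o,J,g)$ is almost K\"ahler.

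The main technical point to verify carefully is that the pointwise polar-decomposition recipe genuinely produces an object compatible with the orbifold structure: that is, that the locally defined $J_\a$ agree under the change-of-chart maps and are invariant under each $\G_\a$. This reduces to the functoriality (naturality) of polar decomposition under orthogonal conjugation, which is where the invariant-theoretic care enters, but it is the only place where the orbifold structure could in principle obstruct the argument. Since every construction used --- averaging, convex combination, and polar decomposition --- is canonical and equivariant, the obstruction does not arise, and the manifold proof of \cite{Silva} carries over verbatim to orbifolds. This is exactly the content claimed before the statement, so I expect the proof to be short and to consist mainly of recording these equivariance checks.
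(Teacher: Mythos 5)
Your proposal is correct and follows exactly the route the paper intends: the paper gives no independent argument but simply observes (citing \cite{MR} and \cite{Silva}) that the classical compatible-triple construction --- auxiliary invariant metric via averaging and an orbifold partition of unity, then the canonical polar-decomposition $J=(\sqrt{A^*A})^{-1}A$ from $\o(u,v)=g_0(Au,v)$, then $g(u,v)=\o(u,Jv)$ --- carries over to orbifolds because every step is natural and $\G_\a$-equivariant. Your equivariance and change-of-charts checks are precisely the content implicit in that remark, so there is nothing to add.
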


\begin{corollary} \label{cor:isotropy unitary}
Let $(X,\o)$ be a symplectic $2n$-orbifold. Every point in $X$ admits a chart
$(U,V,\phi,\G,\o)$ with $\G < \U(n)$.
\end{corollary}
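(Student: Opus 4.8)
The plan is to upgrade the orthogonal local model to a unitary one by exploiting the compatible almost complex structure. First I would invoke Proposition \ref{prop:almost-Kahler} to fix an almost K\"ahler orbifold structure $(X,\o,J,g)$; here $g$ is an orbifold Riemannian metric, hence $\G_\a$-invariant on every chart, and $J$ is a $\G_\a$-invariant orbifold almost complex structure satisfying $g(J\cdot,J\cdot)=g(\cdot,\cdot)$ and $\o=g(J\cdot,\cdot)$. The symplectic form $\o$ then plays no further active role: it is determined by $(g,J)$ and is simply carried along in the chart data $(U,V,\phi,\G,\o)$.

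Fix a point $x\in X$ and let $u$ be its unique preimage in a chart. Exactly as in the proof that every isotropy group can be conjugated into $\OO(2n)$, I would pass to geodesic normal coordinates for the invariant metric $g$ centred at $u$: the exponential map $\exp_u\colon T_uU\to U$ satisfies $\g\circ\exp_u=\exp_u\circ\, d\g_u$ because $\g$ is a $g$-isometry fixing $u$, so in these coordinates $\G$ acts linearly through its isotropy representation $\rho\colon\G\to\GL(T_uU)$, $\rho(\g):=d\g_u$. Since $g$ is invariant and becomes the Euclidean inner product $g_u$ at the origin (after choosing a $g_u$-orthonormal basis), $\rho$ lands in $\OO(T_uU,g_u)\cong\OO(2n)$.

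The new ingredient is to keep track of $J$. Differentiating the invariance relation $\g^*J=J$ at the fixed point $u$, and using $\g(u)=u$, yields $\rho(\g)\circ J_u=J_u\circ\rho(\g)$ for every $\g\in\G$, so each $\rho(\g)$ is complex linear for $J_u$; combined with orthogonality this means $\rho(\g)$ preserves the Hermitian structure determined by $(g_u,J_u)$, i.e. $\rho(\g)\in\U(T_uU,g_u,J_u)$. Finally I would standardize this Hermitian structure: because $\OO(2n)$ acts transitively on the set of $g_u$-orthogonal complex structures with stabilizer the standard $\U(n)$, there is $A\in\OO(T_uU,g_u)$ with $AJ_uA^{-1}=J_{\mathrm{std}}$. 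Conjugating the coordinates by the linear map $A$ keeps $g_u$ Euclidean and turns $J_u$ into the standard complex structure, so $A\rho(\G)A^{-1}$ preserves both and therefore lies in the standard copy $\U(n)<\OO(2n)$. By the germ-equivalence formalism of Section \ref{sec:sym-orb}, this conjugated chart is an admissible orbifold chart around $x$ with $\G<\U(n)$, as required.

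Since the argument is essentially linear algebra once the normal-coordinate linearization is in place, I do not expect a serious obstacle. The only point that needs care is the last step: that a single linear change of coordinates can be chosen \emph{inside} $\OO(2n)$ so as to standardize $J_u$ without disturbing the already-Euclidean metric $g_u$. This is precisely the fact that $\U(n)$ is the common stabilizer of a compatible pair $(g_u,J_u)$, equivalently that the space of compatible complex structures is $\OO(2n)/\U(n)$.
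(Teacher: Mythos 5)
Your proposal is correct and follows essentially the same route as the paper: both fix the almost K\"ahler structure from Proposition \ref{prop:almost-Kahler}, linearize the isotropy action at the fixed point (via the exponential map of the invariant metric), observe that the linearized action commutes with $J_u$ and preserves $g_u$, and conclude $\G < \GL(n,\CC)\cap\OO(2n)=\U(n)$. The only difference is presentational: the paper simply asserts the existence of a linear chart in which $(\o_p,J_p,g_p)$ is standard at the origin, whereas you explicitly construct it by conjugating with an element of $\OO(2n)$ using transitivity of $\OO(2n)$ on compatible complex structures with stabilizer $\U(n)$ --- a detail the paper leaves implicit.
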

\begin{proof}
Put any almost K\"ahler structure $(\o,J,g)$ on $X$ as provided by Proposition \ref{prop:almost-Kahler}.
Fix a chart $(U,V,\phi,\G)$ around $p$ such that $\phi(0)=p$, $\G$ acts linearly, 
and the almost K{\"a}hler structure $(\o_p,J_p,g_p)=(\o_0,\frj,g_0)$ at $p=0$ is standard.
As $J$ is an orbifold almost complex structure, $\G$ preserves $J$; in particular
at the point $0 \in U$ we have $d_0\g \circ \frj=\frj \circ d_0\g$ for all $\g\in \G$. 
As $\g$ is linear, we have that $d_0\g=\g$, hence $\g$ preserves the complex structure of $\CC^n=(\RR^{2n},\frj)$. This means that $\G<\GL(n,\CC)$.
Analogously, since $\g$ preserves the standard metric $g_0$, one sees that $\G <\OO(2n)$.
The conclusion is that $\G < \GL(n,\CC) \cap \OO(2n)=\U(n)$.
\end{proof}

For symplectic (almost K{\"a}hler) orbifolds, the isotropy set inherits a symplectic (almost K{\"a}hler) structure.

\begin{corollary} \cite[Corollary 9]{MR}\label{cor:overSH}
The isotropy set $\S$ of $(X,\o)$ consists of immersed symplectic 
suborbifolds $\overline{\S}_H$.
Moreover, if we endow $X$ with an almost K\"ahler orbifold structure $(\o,J,g)$, 
then the sets $\overline{\S}_H$ are almost K\"ahler suborbifolds.
\end{corollary}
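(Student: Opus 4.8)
The plan is to reduce the statement to a pointwise linear-algebra fact about complex subspaces of $(\CC^n,\frj)$, exploiting the unitary charts furnished by Corollary \ref{cor:isotropy unitary}. That each $\overline{\S}_H$ is an immersed suborbifold is already the content of Proposition \ref{prop:isotropy set}, so the only new task is to verify that the restriction of the orbifold form $\o$ (respectively of the triple $(\o,J,g)$) to $\overline{\S}_H$ is again symplectic (respectively almost K\"ahler). Since non-degeneracy and compatibility are pointwise conditions, it suffices to check them in one orbifold chart around each isotropy point, and then observe that everything patches because $\o$, $J$ and $g$ are orbifold tensors.

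First I would fix a point $p \in \overline{\S}_H$ and, by Corollary \ref{cor:isotropy unitary}, choose a chart $(U,V,\phi,\G,\o)$ around $p$ with $\phi(0)=p$, with $\G < \U(n)$ acting linearly, and with the almost K\"ahler data standard at $0$, namely $(\o_0,\frj,g_0)$. In this chart the local model of $\overline{\S}_H$ is the common fixed locus $W=\fix(H)=\bigcap_{\g\in H}\ker(\g-\id)$: by definition the stratum $\S_H$ is formed by the points whose isotropy group equals $H$, and these are exactly the points of $W$ not lying in a smaller fixed locus, which matches the description $\S_H=\overline{\S}_H-\bigcup_{H<H'}\S_{H'}$ of Proposition \ref{prop:isotropy set}.

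The key observation is that each $\g\in H\subset\U(n)$ is complex-linear, hence commutes with $\frj$; therefore $\ker(\g-\id)$ is $\frj$-invariant, and so is the intersection $W$. Thus $W$ is a complex subspace of $(\CC^n,\frj)$, and the standard almost K\"ahler structure restricts to it as a genuine (linear) K\"ahler structure: $\frj|_W$ is a complex structure, $g_0|_W$ the induced Hermitian metric, and $\o_0|_W=g_0(\frj\,\cdot\,,\cdot)|_W$ the associated K\"ahler form. The latter is non-degenerate precisely because $W$ is $\frj$-invariant, since for $0\neq v\in W$ one has $\frj v\in W$ and $\o_0(v,\frj v)=g_0(\frj v,\frj v)=|v|^2>0$. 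In particular, the tangent space to $\overline{\S}_H$ at every point is a complex, hence symplectic, subspace.

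It remains to assemble the local data. Closedness of the restricted form is automatic, as $d(\o|_{\overline{\S}_H})=(d\o)|_{\overline{\S}_H}=0$; non-degeneracy has just been established pointwise; and compatibility of $(\o|_{\overline{\S}_H},J|_{\overline{\S}_H},g|_{\overline{\S}_H})$ is inherited from the $\frj$-invariance of $W$. Because $J$, $\o$ and $g$ agree under the changes of charts, their restrictions patch to well-defined orbifold tensors on $\overline{\S}_H$, yielding the asserted almost K\"ahler (in particular symplectic) structure. The main obstacle I anticipate is the bookkeeping required to identify, compatibly across charts, the local model of $\overline{\S}_H$ with the complex fixed subspace $\fix(H)$: one must ensure that the chart provided by Corollary \ref{cor:isotropy unitary} can be arranged so that the suborbifold sits as the $\frj$-complex subspace $W$ in the sense of Definition \ref{def:suborbifold}, and that this identification is independent of the chart up to the action of $\G$.
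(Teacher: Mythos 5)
Your proposal is correct and follows essentially the same route as the proof this paper relies on (the paper gives no proof of this corollary, citing \cite[Corollary 9]{MR}): in a chart centered at each point where $\G<\U(n)$ acts linearly and the triple $(\o,J,g)$ is standard at the origin, every local branch of $\overline{\S}_H$ is the fixed locus of a subgroup of $\U(n)$, hence a $\frj$-complex, therefore $J$-invariant and $\o$-nondegenerate subspace, and the restricted tensors patch to orbifold data. The only caution---which you already flag---is that at points of $\overline{\S}_H\setminus\S_H$ the local model is a union of such fixed loci, one for each subgroup of $\G_p$ equivalent to $H$, which is precisely why the suborbifolds are only immersed; your pointwise linear-algebra argument applies verbatim to each branch.
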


The following result is a Darboux theorem for symplectic orbifolds.

\begin{proposition} \cite[Proposition 10]{MR} \label{prop:Darboux}
Let $(X,\o)$ be a symplectic orbifold and $x_0 \in X$. There exists an orbifold chart $(U, V,\phi, \G)$ around $x_0$
with local coordinates $(x_1,y_1,\ldots, x_n,y_n)$ such that the symplectic form 
has the expression $\o= \sum dx_i \wedge dy_i$ and $\G < \U(n)$ is a subgroup of the unitary group.
\end{proposition}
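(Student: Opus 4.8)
The plan is to run an equivariant version of Moser's stability argument inside a linear unitary chart. First I would invoke Corollary \ref{cor:isotropy unitary} to obtain a chart $(U,V,\phi,\G)$ around $x_0$ in which $\G < \U(n)$ acts linearly and $x_0$ corresponds to the fixed point $0 \in U \subset \RR^{2n}$. As in the proof of that corollary, I may further arrange that the almost K\"ahler data at the origin are standard, so that $\o_0 = \o_{\mathrm{std}} := \sum dx_i \wedge dy_i$ as bilinear forms at $0$. Both $\o$ and the constant-coefficient form $\o_{\mathrm{std}}$ are then $\G$-invariant on $U$ (since $\G < \U(n)$ preserves the standard symplectic structure), and they agree at the origin.

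Next I would express the difference $\o - \o_{\mathrm{std}}$ as an exact form $d\b$ with a $\G$-invariant primitive $\b$ that vanishes at $0$. Shrinking $U$ to a star-shaped $\G$-invariant neighborhood of the origin, the Poincar\'e lemma produces a primitive via the standard homotopy operator; since $\o - \o_{\mathrm{std}}$ is closed, $\G$-invariant, and vanishes as a form at $0$, averaging the primitive over the finite group $\G$ yields a $\G$-invariant $\b$ with $\b_0 = 0$, and the identity $d\b = \o - \o_{\mathrm{std}}$ is unaffected by the averaging because $d$ commutes with the linear $\G$-action.

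Then I would apply Moser's trick equivariantly. Set $\o_t = \o_{\mathrm{std}} + t\,d\b$ for $t \in [0,1]$; each $\o_t$ equals $\o_{\mathrm{std}}$ at the origin and is therefore nondegenerate on a neighborhood of $0$ that is uniform in $t$. Define the time-dependent vector field $X_t$ by $\iota_{X_t}\o_t = -\b$. Because $\o_t$ and $\b$ are $\G$-invariant, $X_t$ is $\G$-equivariant, and because $\b_0 = 0$ the field $X_t$ vanishes at the origin; hence its flow $\psi_t$ is defined on a neighborhood of $0$ for all $t \in [0,1]$, fixes $0$, and is $\G$-equivariant. The usual computation $\frac{d}{dt}\psi_t^*\o_t = \psi_t^*\bigl(\mathcal{L}_{X_t}\o_t + d\b\bigr) = \psi_t^*\bigl(d\,\iota_{X_t}\o_t + d\b\bigr) = 0$ gives $\psi_1^*\o = \o_{\mathrm{std}}$.

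Finally, I would read off the new chart: replacing $\phi$ by $\phi \circ \psi_1$ produces coordinates $(x_1,y_1,\ldots,x_n,y_n)$ in which $\o = \sum dx_i \wedge dy_i$. The equivariance $\psi_1 \circ \g = \g \circ \psi_1$ gives $\psi_1^{-1}\circ\g\circ\psi_1 = \g$, so $\G$ acts by exactly the same unitary matrices in the new chart and we retain $\G < \U(n)$. The main obstacle is the equivariance bookkeeping: one must ensure that every object in the Moser construction --- the primitive $\b$, the vector field $X_t$, and its flow $\psi_t$ --- is $\G$-equivariant, so that the resulting Darboux chart is a genuine orbifold chart with the group still acting linearly and unitarily. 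The averaging step and the vanishing of $X_t$ at the fixed origin are precisely what make this work.
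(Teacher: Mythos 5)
Your proof is correct: the equivariant Moser trick --- linear unitary chart from Corollary \ref{cor:isotropy unitary}, a $\G$-invariant primitive $\b$ with $\b_0=0$ via the radial homotopy operator plus averaging, and the flow of the equivariant vector field $\iota_{X_t}\o_t=-\b$ fixing the origin --- is exactly the standard argument, and it is the one behind the cited result \cite[Proposition 10]{MR}, which this paper quotes without reproducing the proof. All the equivariance bookkeeping you flag (invariance of $\b$, equivariance of $X_t$ and $\psi_t$, and $\psi_1^{-1}\G\psi_1=\G$) is handled correctly, so nothing is missing.
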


Any orbifold almost K{\"a}hler structure 
can be perturbed to make it standard around any chosen point.
We include a proof below.
Denote $\frj$ the standard complex structure on $\CC^n$.

\begin{corollary} \label{cor:flat chart}
Let $(X,\o)$ be a symplectic orbifold, and let $(J,g)$ be a compatible 
almost K{\"a}hler structure. Let $p \in X$ a point and $(U,V,\phi,\G,\o_0)$ 
a Darboux chart around $p$.
Choose $V_1$ a neighborhood of $p$ such that $\overline{V_1} \subset V$,
and let $U_1=\phi^{-1}(V_1) \subset U$.
There exists another compatible almost K{\"a}hler structure $(J',g')$
such that $J'=J$ and $g'=g$ outside $V$, and $(J',g')$ is the standard $(\frj,g_0)$
when lifted to the chart $U_1 \subset U$.
\end{corollary}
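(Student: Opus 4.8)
The plan is to keep the symplectic form $\o$ fixed and to modify only the compatible data $(J,g)$, exploiting the fact that on the Darboux chart $\o=\o_0$ is already standard. The essential tool is the canonical retraction from pairs (symplectic form, metric) to compatible almost K\"ahler pairs, obtained via polar decomposition (see \cite{MS}). Concretely, given a symplectic form $\o$ and a Riemannian metric $g$ on a $2n$-dimensional vector space, let $A\in\End(\RR^{2n})$ be defined by $g(Av,w)=\o(v,w)$; then $A$ is $g$-skew-adjoint and invertible, so $P:=(-A^2)^{1/2}$ is positive-definite symmetric and commutes with $A$, and $r(\o,g):=(J,g')$ with $J:=AP^{-1}$ and $g'(v,w):=g(Pv,w)$ is an $\o$-compatible almost K\"ahler pair. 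Three properties of $r$ are what make the argument work: it depends smoothly on $(\o,g)$; it is natural, i.e. equivariant under any linear isomorphism preserving both $\o$ and $g$, so it preserves $\G$-invariance and hence descends to orbifold charts; and it is a retraction, meaning $r(\o,g)=(J,g)$ whenever $g$ is already $\o$-compatible with complex structure $J$ (indeed then $A=J$, so $-A^2=\id$, $P=\id$ and $J=A$).

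Next I would build the interpolating metric. Using Proposition \ref{prop:partition unity} I choose a $\G$-invariant orbifold cutoff function $\chi\colon X\to[0,1]$ with $\chi\equiv 1$ on $V_1$ and $\mathrm{supp}\,\chi\subset V$. On the chart $U$ the standard metric $g_0$ and the given metric $g$ are both $\G$-invariant and compatible with $\o_0$, so I set
$$
g''=\chi\, g_0+(1-\chi)\, g .
$$
Since $\chi$ is supported in $V$, the term $\chi g_0$ vanishes off $V$ and $g''$ is a globally defined $\G$-invariant orbifold Riemannian metric, because a convex combination of positive-definite forms is positive-definite. By construction $g''=g_0$ on $V_1$ and $g''=g$ outside $\mathrm{supp}\,\chi$.

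Finally, I would define $(J',g'):=r(\o,g'')$ pointwise. On the lift $U_1$ the inputs are the compatible pair $(\o_0,g_0)$, so the retraction property gives $(J',g')=(\frj,g_0)$, which is the desired standard structure; outside $V$ the inputs are the compatible pair $(\o,g)$, so $(J',g')=(J,g)$; and in between $r$ returns a genuine $\o$-compatible almost K\"ahler pair. Smoothness across $\bd V$ follows from the smoothness of $r$ together with the fact that $g''$ coincides with $g$ where $\chi$ vanishes, and $\G$-equivariance (hence descent to $X$) follows from naturality of $r$ and $\G$-invariance of $\chi$, $g_0$, $g$ and $\o_0$.

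The main obstacle is precisely the one that the polar-decomposition retraction is designed to overcome: one cannot simply interpolate $J$ and $\frj$ linearly, since a convex combination of almost complex structures need neither square to $-\id$ nor be $\o$-compatible. Interpolating the metrics, a convex and therefore harmless operation, and then applying $r$ converts this into an automatically compatible family; the only points requiring care are the $\G$-equivariance, guaranteed by naturality of $r$, and the verification that $r$ genuinely fixes the two compatible endpoints, so that the glued structure matches $J$ outside $V$ and $\frj$ on $U_1$.
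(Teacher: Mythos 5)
Your proof is correct and follows essentially the same route as the paper: interpolate the standard metric $g_0$ and the given metric $g$ by an orbifold bump function supported in $V$, then apply the canonical polar-decomposition retraction (this is exactly the auxiliary-metric construction in the proof of Proposition \ref{prop:almost-Kahler}), using that the retraction fixes already-compatible pairs to get $(\frj,g_0)$ on $U_1$ and $(J,g)$ outside $V$. The only difference is that you spell out the retraction and its naturality explicitly, whereas the paper simply cites the proof of Proposition \ref{prop:almost-Kahler}.
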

\begin{proof}
Take a bump function $\rho$ which equals $1$ in $V_1$ and $0$ outside $V$.
Consider the metric $g_1=\rho g_0 + (1-\rho)g$, where $\rho g_0$ coincides
with the standard metric in $U_1$ and extends as $0$ to all $X$.
If we use the metric $g_1$ as auxiliary metric in the proof of Proposition 
\ref{prop:almost-Kahler} and construct a compatible almost K{\"a}hler structure 
$(g',J')$, we find that $J'=\frj$, $g'=g_0$ when lifted in $U_1$ because both $\o$
and the auxiliary metric $g_1$ are standard in $U_1$.
\end{proof}

Let us recall a result from symplectic linear algebra that will be useful later.
Consider the retraction
\begin{equation} \label{eqn:r(A)}
 r \colon \Sp(2n,\RR)\to \U(n), \qquad r(A)=A(A^tA)^{-1/2}
\end{equation}
The fact that $A(A^tA)^{-1/2} \in \U(n)=\Sp(2n,\RR) \cap \OO(2n)$ for any $A \in \Sp(2n,\RR)$
can be seen as follows. First, since $A^t \O_0 A=\O_0$, with $\O_0$ the matrix of the standard symplectic form on $\RR^{2n}$, it is easy to check that $(A^tA)^t \O_0 A^tA=\O_0$
so $A^tA \in \Sp(2n,\RR)$. Then, by expressing the square root $S^{1/2}$ as a power series of $S$, for $S$
a positive definite symmetric matrix, one sees that $(A^t A)^{1/2} \in \Sp(2n,\RR)$, hence so does its inverse
and it follows that $r(A)=A(A^tA)^{-1/2} \in \Sp(2n,\RR)$. Finally, using that $S$ and $S^{1/2}$ commute,
it follows that $r(A)^t r(A)=\Id$, so $r(A) \in \OO(2n)$.
 
This retraction satisfies the following. 
If there is a group $\G<\U(k)$ and an isomorphism $\rho \colon \G \to \G'<\U(k)$, such that
$A \in \Sp(2n,\RR)$ is $\G$-equivariant in the sense that $A \circ \g=\rho(\g)\circ A$
for all $\g \in G$, then $r(A)$ is also $\G$-equivariant, i.e. 
$r(A) \circ \g=\rho(\g)\circ r(A)$ for all $\g \in G$. 
This property is a consequence of the following result:

\begin{lemma} \cite[Lemma 21]{MR} \label{lem:retrac}
Let $A,C \in \U(k)$ and $B \in \Sp(2n,\RR)$ such that $A=B^{-1}  C B$.
Then $A=r(B)^{-1} C \, r(B)$.
\end{lemma}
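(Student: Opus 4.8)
The plan is to exploit the polar decomposition that is already built into the definition of $r$. Writing $P = (B^t B)^{1/2}$ for the symmetric positive-definite square root and $Q = r(B) = B P^{-1}$, one has the factorization $B = Q P$ with $Q \in \OO(2n)$, so that $Q^{-1} = Q^t$. Substituting this into the hypothesis gives $A = B^{-1} C B = P^{-1}\,(Q^{-1} C Q)\,P$. Since $P$ is invertible, the desired identity $A = Q^{-1} C Q = r(B)^{-1} C\, r(B)$ becomes equivalent to the single assertion that $Q^{-1} C Q$ commutes with $P$. So the whole problem reduces to establishing that commutation.

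The key step is to extract this from the fact that $A$ and $C$ are not merely invertible but orthogonal (they lie in $\U(n) \subset \OO(2n)$). First I would prove that $C$ commutes with $B B^t$. Indeed, from $A = B^{-1} C B$ one computes $A^t A = B^t C^t (B B^t)^{-1} C B$, and imposing $A^t A = \Id$ yields $C^t (B B^t)^{-1} C = (B B^t)^{-1}$; using $C^t = C^{-1}$ this rearranges to $C\,(B B^t) = (B B^t)\,C$.

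With this in hand the remainder is formal. Since $B B^t = Q P^2 Q^{-1}$ is symmetric and positive-definite, its unique positive-definite square root is $(B B^t)^{1/2} = Q P Q^{-1}$, and any matrix commuting with a positive-definite matrix commutes with its square root as well (the square root being a limit of polynomials in the matrix, exactly the power-series argument used above for $(A^tA)^{1/2}$). Hence $C$ commutes with $Q P Q^{-1}$, and conjugating this relation by $Q^{-1}$ shows precisely that $Q^{-1} C Q$ commutes with $P$, which closes the argument.

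I expect the only genuine obstacle to be the second paragraph: the conclusion is false for an arbitrary invertible $C$, so the proof must actually use orthogonality, and the cleanest way I see to do so is the computation forcing $C$ to commute with $B B^t$. Everything else is manipulation of the two polar factors $Q$ and $P$. I would also be careful with the dimension bookkeeping, identifying $\U(n)$ with $\Sp(2n,\RR) \cap \OO(2n)$ so that $A$, $C$, and $B$ are all regarded as real $2n \times 2n$ matrices and all the products are meaningful.
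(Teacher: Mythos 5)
Your proof is correct, and every step checks out: the reduction to showing that $r(B)^{-1}Cr(B)$ commutes with $P=(B^tB)^{1/2}$, the computation forcing $C(BB^t)=(BB^t)C$ from $A^tA=\Id$ and $C^t=C^{-1}$, and the passage from a positive-definite matrix to its square root via polynomial approximation (the same device the paper itself uses just above the lemma to show $(A^tA)^{1/2}\in\Sp(2n,\RR)$). One point of comparison is worth noting: the paper gives no proof at all here --- it cites \cite[Lemma 21]{MR} --- so your argument genuinely fills in what the text leaves to a reference, and it does so with exactly the right key idea, namely that orthogonality of $A$ and $C$ (not just invertibility) is what forces the commutation $C(BB^t)=(BB^t)C$. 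Two small remarks. First, your proof never uses that $B$ is symplectic, only that it is invertible, nor that $A,C$ are unitary rather than merely orthogonal; so you have in fact proved a slightly more general linear-algebra statement, which is fine since $\U(n)=\Sp(2n,\RR)\cap\OO(2n)$. Second, once $C$ commutes with $BB^t$ (hence with $(BB^t)^{\pm 1/2}$), you could shortcut the polar-factor bookkeeping by using the identity $B(B^tB)^{-1/2}=(BB^t)^{-1/2}B$, which gives
$$
r(B)^{-1}Cr(B)=B^{-1}(BB^t)^{1/2}\,C\,(BB^t)^{-1/2}B=B^{-1}CB=A
$$
in one line; also, the commutation itself can be obtained even faster by transposing $A=B^{-1}CB$ to get $A=B^tCB^{-t}$ and comparing the two expressions for $A$. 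But these are cosmetic simplifications of the same argument; your proof stands as written.
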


\section{Symplectic orbifolds in dimension $4$.} \label{sec:sym-orb-4}

\subsection{The isotropy set in dimension $4$.} \label{subsec:local}

Let $(X,\o)$ be a symplectic orbifold of dimension $4$. Let $x \in X$ a point.
Put a compatible orbifold almost complex structure on $(X,\o)$,
obtaining an almost K{\"a}hler orbifold $(X,\o,J)$.
By the equivariant Darboux Theorem, around any point we have an orbifold chart
$(U,V,\p,\G,\o_0)$ 
such that $U=B_\e(0) \subset \CC^2$ is a ball and $\p^{-1}(\{x\})=\{0\}$,
and $\G=\G_x < \U(2)$ acts in $U$ by unitary matrices.
Unless otherwise stated, 
from now on we assume that every orbifold chart of $(X,\o)$ has the form above. 
We will write $(U,V,\p,\G,\o_0)$ if moreover the symplectic form is standard in the chart,
and analogously for another tensors like $g_0$ and $\frj$.

In dimension $4$ the isotropy set can be expressed as a union $\S=\S^0\cup \S^* \cup \S^1$
of three subsets with distinct properties. These are determined by a geometric condition
that depends on the action of the isotropy groups $\G_x < \U(2)$ in $\CC^2$, as follows.

\begin{enumerate}
\item[]\textbf{Case 1:}  $x\in \S^0$ if the action of $\G_x$ on $\CC^2-\{0\}$ is free.
\medskip
\item[]\textbf{Case 2:}  $x\in \S^*$ if there exists a complex line $L \subset \CC^2$
such that for every $\g \in \G_x$ we have $\fix(\g)=D$.
\medskip
\item[]\textbf{Case 3:}  $x\in \S^1$ if there exist at least two complex lines $L_1, L_2 \subset \CC^2$
and non-identity elements $\g_1, \g_2 \in \G_x$ so that $L_1 \subset \fix(\g_1)$ and $L_2 \subset \fix(\g_2)$.
\end{enumerate}

Note the following:
\begin{itemize}
\item If $x\in \S^0$, then $x$ is an isolated point of $\S$.
That is why the points of $\S^0$ are called \emph{isolated singular points}.
\medskip
\item If $x \in \S^*$ then $D=\p(L)$ 
is contained on $\S^*$ and every point on this line has constant isotropy $\G_x$.
The connected components of $\S^*$
are therefore surfaces $S_i$ such that all its points have the same isotropy group $\G_i$.
\medskip
\item The points of $\S^1$ are also isolated; in addition these lie on the closure of some surfaces $S_i \subset \S^*$. 
Given $x\in \S^1$, let us call $I_x$ the set of indices $i$ such that the surface $S_i$ accumulates to $x$ and write $\G_i$ the isotropy set of $S_i$. 
\end{itemize}

We have the following result:

\begin{lemma} \label{lemma:trivial}
Let $p \in \S^1$ and let $(U,V,\G)$ be an orbifold chart around $p$, with $\G=\G_p < \U(2)$.
Let $\G^*= \la \G_i \mbox{ s.t. } i \in I_p \ra \lhd \G$ be the normal subgroup
generated by the isotropy groups of all the surfaces $S_i$ accumulating at
$p$. 
\begin{enumerate}
\item The space $U'=U/\G^*$ is a topological manifold and inherits naturally a complex orbifold structure with isotropy the surfaces $S_i$.

\item The quotient group $\G'=\G/\G^*$ has an induced action on $U'=U/\G^*$, 
and moreover $U'/\G'=U/\G$.
\end{enumerate}
\end{lemma}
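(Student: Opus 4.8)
The plan is to recognize $\G^*$ as a complex reflection group and then invoke the Shephard--Todd--Chevalley theorem. First I would record the fundamental local observation: since each surface $S_i$ lies in $\S^*$, every non-identity element $\g \in \G_i$ fixes the complex line $L_i$ pointwise, i.e. $\g$ is a complex (pseudo-)reflection with mirror $L_i$. As $\g \in \U(2)$ is linear and unitary, it is determined by its action on the orthogonal line $L_i^\perp$, so $\G_i$ embeds into $\U(1)$ and is cyclic of some order $m_i = |\G_i|$. Consequently $\G^* = \la \G_i : i \in I_p \ra$ is generated by complex reflections.

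For part (1), the smoothness of $U'$ follows from the Shephard--Todd--Chevalley theorem: a finite subgroup of $\GL(2,\CC)$ is generated by complex reflections if and only if its invariant ring is a polynomial algebra. Thus $\CC[z_1,z_2]^{\G^*} = \CC[f_1,f_2]$ for two homogeneous invariants, and the map $F=(f_1,f_2)\colon \CC^2 \to \CC^2$ descends to an isomorphism of complex varieties $\CC^2/\G^* \xrightarrow{\cong} \CC^2$. Since $U = B_\e(0)$ is $\G^*$-invariant and open, $U' = U/\G^*$ is an open subset of this quotient, hence a topological (indeed complex) manifold. The complex orbifold structure on $U'$ is the quotient orbifold structure $[U/\G^*]$, which is automatically a complex orbifold because $\G^* < \U(2)$ acts holomorphically. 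It remains to identify its isotropy locus. Here I would use that a non-identity linear map fixing a nonzero vector $u$ fixes the whole complex line $\CC u$ pointwise; hence any point of $U$ with nontrivial $\G^*$-stabilizer lies on a mirror of some reflection of $\G^*$, and conversely. Each such mirror is either one of the $L_i$ or a $\G^*$-conjugate, and since $\phi$ is $\G$-invariant all of these project to surfaces $S_i$ with $i\in I_p$. Thus the non-free locus of the $\G^*$-action maps exactly onto $\bigcup_i \overline{S}_i$, so the isotropy of $U'$ is supported on the surfaces $S_i$. Finally, for a generic $q \in L_i$ the stabilizer in $\G^*$ consists of the elements fixing $\CC q = L_i$ pointwise, which is precisely $\G_i \cong \ZZ_{m_i}$; this shows the isotropy of $U'$ along $S_i$ is the cyclic group $\ZZ_{m_i}$.

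For part (2), the statement is essentially formal once normality of $\G^* \lhd \G$ is given. I would define the action of $\G' = \G/\G^*$ on $U' = U/\G^*$ by $(g\G^*)\cdot [u] = [g u]$, and check it is well defined: if $u' = h u$ with $h \in \G^*$ then $g u' = (g h g^{-1})\, g u$ and $g h g^{-1} \in \G^*$ by normality, so $[g u'] = [g u]$. This action is holomorphic since $\G$ acts holomorphically on $U$, and it preserves the orbifold structure. The identity $U'/\G' = U/\G$ then follows from the standard factorization of the quotient: the composite $U \to U/\G^* \to (U/\G^*)/(\G/\G^*)$ coincides with $U \to U/\G$ and induces the required homeomorphism of orbifolds.

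The main obstacle is the conceptual step in part (1): transporting the purely group-theoretic smoothness of $\CC^2/\G^*$ furnished by Shephard--Todd--Chevalley into the geometric statement that the isotropy of the orbifold $U'$ is exactly the configuration of surfaces $\{S_i\}$. This requires carefully matching the abstract mirror arrangement of the reflection group $\G^*$ with the isotropy stratification of $X$ inherited from the chart, in particular verifying that mirrors and their conjugates project precisely onto the $S_i$ and that the residual isotropy along each $S_i$ is the expected cyclic group. The smoothness itself, and all of part (2), are comparatively routine.
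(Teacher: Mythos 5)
Your proposal follows essentially the same route as the paper: both rest on the Chevalley-type theorem that the invariant algebra of a finite complex reflection group is a polynomial algebra (the paper cites Chevalley and Prill, you cite Shephard--Todd--Chevalley), so that the invariants map $(f_1,f_2)$ induces a homeomorphism $\CC^2/\G^* \cong \CC^2$ and hence smoothness of $U'$; part (2) is the same formal quotient argument in both cases. Your extra verification that the non-free locus of $\G^*$ is exactly the mirror arrangement, and that the mirrors project precisely onto the surfaces $S_i$, is in fact more detailed than what the paper writes down.

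The one genuine omission is the normality of $\G^*$ in $\G$, which you treat as given (``once normality of $\G^* \lhd \G$ is given'') but which the paper proves as the first step of its argument. It is needed for $\G'=\G/\G^*$ to be a group at all, and under the natural reading of the statement it is a claim rather than a hypothesis, since $\G^*$ is defined as the subgroup generated by the isotropy groups $\G_i$. The fix is short and your own setup already contains the ingredients: for $\g\in\G$ and a reflection $r\in\G_i$ whose mirror is a line $L$ lying over $S_i$, the conjugate $\g r\g^{-1}$ is again a reflection with mirror $\g(L)$; since $\phi$ is $\G$-invariant, $\g(L)$ lies over one of the surfaces accumulating at $p$, so $\g r \g^{-1}$ belongs to one of the generating isotropy groups. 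Hence the generating set of $\G^*$ is stable under conjugation and $\G^*\lhd\G$. (If one instead reads $\G^*$ as the normal closure, normality is definitional, but then the same observation is needed to see that $\G^*$ is still generated by complex reflections, which is what Shephard--Todd--Chevalley requires.) With this paragraph added, your proof is complete and matches the paper's.
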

\begin{proof}
We check first that $\G^*$ is a normal subgroup of $\G$.
Take $g_i \in \G_i$, and $\g \in \G$. Then $\g g_i \g^{-1}$ leaves fixed all the points
in the surface $\g(S_i) \subset U$. Hence $\g g_i \g^{-1}$ belongs to the isotropy group 
of some of the surfaces $S_j=\g(S_i)$. 
This means that $\G \circ \G_i \circ \G^{-1} \subset \bigcup_j \G_j \subset \G^*$.
If we take now a generic element of $\G^*$, 
i.e. finite product $\prod_k g_{i_k}$ with $g_{i_k} \in \G_{i_k}$,
then for any $\g \in \G$ we have $\g (\prod_k g_{i_k}) \g^{-1}= \prod_k (\g g_{i_k} \g^{-1}) \in \G^*$,
and this proves that $\G \circ \G^* \circ \G^{-1} \subset \G^*$, so $\G^*$ is a normal subgroup.

The complex orbifold structure on $U/\G^*$ exists because 
$\G$ acts in $U \subset \CC^2$ by biholormorphisms, so it acts $\frj$-equivariantly.
To see that $U'=U/\G^*$ is a topological manifold, observe that
the group $\G^*$ acts in $\CC^2$ and it is generated by complex reflections. 
Hence the algebra $\CC[z_1,z_2]^{\G^*}$ of $\G^*$-invariant polynomials is a polynomial algebra 
generated by $2$ elements, say $f,g$. This is proved for real reflections in \cite{Chev},
but the proof carries over to complex reflections also, see \cite{Prill}.
Consider 
\begin{equation*} 
H\colon \CC^2 \to \CC^2 \quad , \quad H(z)=(f(z_1,z_2),g(z_1,z_2)).
\end{equation*}
This map induces an homeomorphism $\bar{H} \colon \CC^2/\G^* \to \CC^2$.
That ensures that $U/\G^*$ is a topological manifold.

Now consider $\G'=\G/\G^*=\{\g \G^* \mbox{ s.t. } \g \in \G\}$, and define its action on 
$U'=U/\G^*=\{\G^* u \mbox{ s.t. } u \in U\}$ by $(\g \G^*) \cdot (\G^* u) = \G^* (\g u)$
for $u \in U$ and $\g \in \G$. This is well defined
since for $\g'=\g \g_1^*$ and $u'=\g_2^* u$ other representatives of $\g \G^*$ and $\G^* u$ we have $\g' u'= (\g \g_1^*) (\g_2^* u)=\g (\g_1^* \g_2^*) u = \g \g^* u = c_g(\g^*) \g  u$, where $c_\g \colon \G \to \G$ is conjugation by $\g$ maps $\G^*$ to itself,
and $\g^*=\g_1^* \g_2^* \in \G^*$ so $\G^* (\g' u')= \G^* (\g u)$. 
It is immediate to check that this gives an action.
Moreover, the orbit of $\G^* u$ in $U'/\G'$ is given by 
$\G' \cdot (\G^* u)=\{ \G^* (\g u) | \g \in \G \}$ 
so it equals $\G u$ the orbit of $u$ in $U/\G$.
\end{proof}

The following Lemma proves the existence of a suitable orbifold almost K{\"a}hler structure
in dimension $4$. It gives a local K{\"a}hler model around any point if $\S^1 \cup \S^0$.

\begin{lemma} \label{lem:orbi-almost-kahler}
Let $(X,\o)$ be a $4$-dimensional symplectic orbifold.
There exists an almost K{\"a}hler structure $(X,\o,g,J)$ such that:
\begin{enumerate}
\item For each point $p \in \S^0$, there is an orbifold chart
$(U,V,\G,\o_0,g_0,\frj)$ around $p$.

\item For each point $p \in \S^1$ there is an orbifold chart
$(U,V,\phi,\G,\o_0,g_0,\frj)$, and each surface $S_i$ that accumulates to $p$
lifts to $\phi^{-1}(S_i)$ which is a union of disjoint complex curves
in the chart $(U,\frj)$.
\end{enumerate}
\end{lemma}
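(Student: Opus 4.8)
The plan is to begin with an arbitrary compatible almost K\"ahler structure $(\o,J,g)$ on $(X,\o)$, furnished by Proposition \ref{prop:almost-Kahler}, and then to straighten it to the standard model near the isolated points of the isotropy set by means of the localized perturbation of Corollary \ref{cor:flat chart}. Since both $\S^0$ and $\S^1$ consist of isolated points of $\S$, the union $\S^0\cup\S^1$ is a discrete, hence locally finite, subset of $X$ (finite when $X$ is compact). First I would fix, for every $p\in\S^0\cup\S^1$, a Darboux chart $(U,V,\phi,\G,\o_0)$ with $\G=\G_p<\U(2)$ as in Proposition \ref{prop:Darboux}, shrinking the neighborhoods $V$ so that they become pairwise disjoint.

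Next I would apply Corollary \ref{cor:flat chart} at each such $p$, replacing $(J,g)$ by a compatible structure that coincides with $(J,g)$ outside $V$ and equals the flat $(\o_0,g_0,\frj)$ on a smaller chart $U_1\subset U$. Because the successive perturbations are supported in the pairwise disjoint sets $V$, they do not interfere and may be performed one point at a time, yielding a single global almost K\"ahler structure $(X,\o,g,J)$ that is standard on a chart around every point of $\S^0\cup\S^1$. Renaming this flat chart $U$, condition (1) is then immediate for the isolated points $p\in\S^0$.

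To obtain condition (2), the essential remark is that in such a flat chart about $p\in\S^1$ the group $\G<\U(2)$ still acts by unitary, and hence $\frj$-holomorphic, linear maps; the perturbation has altered only $J$ and $g$, never the $\G$-action. Each surface $S_i$ with $i\in I_p$ is the image under $\phi$ of a complex line $L_i\subset\CC^2$ fixed pointwise by its isotropy group $\G_i$, and, $\G_i$ being unitary, $L_i$ is $\frj$-complex. The preimage $\phi^{-1}(S_i)$ is then the $\G$-orbit $\bigcup_{\g\in\G}\g(L_i\setminus\{0\})$, a finite union of punctured complex lines. As two distinct complex lines through the origin meet only at $0=\phi^{-1}(p)$, and $p\notin S_i$ since $\S^1$ and $\S^*$ are disjoint, these punctured lines are pairwise disjoint. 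Hence $\phi^{-1}(S_i)$ is a union of disjoint $\frj$-complex curves, establishing condition (2).

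Essentially all of the analytic work is absorbed into Corollary \ref{cor:flat chart}, so the genuine points demanding care are organizational rather than computational: checking that $\S^0\cup\S^1$ is locally finite, so that the perturbations can be confined to disjoint charts, and observing that the flattening yields $\frj$-complex isotropy surfaces \emph{for free}, precisely because a unitary group forces its fixed lines to be complex once $J$ has been straightened to $\frj$. I do not expect the interpolation region between $U_1$ and the complement of $V$ to pose any difficulty, since condition (2) only asks the surfaces to be complex inside the flat chart itself; this is also consistent with Corollary \ref{cor:overSH}, which already makes the $\overline{\S}_H$ complex for every compatible almost K\"ahler structure.
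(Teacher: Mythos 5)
Your proposal is correct and follows essentially the same route as the paper: both apply Corollary \ref{cor:flat chart} to flatten the almost K\"ahler structure in charts around the discrete, locally finite set $\S^0\cup\S^1$, and then observe that the isotropy surfaces lift to $\frj$-complex curves in those flat charts. The only cosmetic difference is that the paper justifies this last point by citing Corollary \ref{cor:overSH}, whereas you argue directly that the lifts are fixed-point sets of unitary elements of $\G_p$, hence punctured complex lines that are pairwise disjoint away from the origin; your explicit bookkeeping about performing the perturbations in pairwise disjoint supports is a detail the paper leaves implicit.
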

\begin{proof}
We use Corollary \ref{cor:flat chart} to put an almost K{\"a}hler structure $(g,J)$
so that there are flat K{\"a}hler charts around any point in $\S^1 \cup \S^0$.
Now, using Corollary \ref{cor:overSH}, both statements are clear.
\end{proof}

\subsection{Tubular neighbourhood of singular surfaces}

With respect to the orbifold almost K{\"a}hler structure of above, 
given a surface $S \subset \S^*$, note that $TS^{\perp \o}=TS^{\perp g}$, i.e. for every $z \in S$,
the symplectic and metric orthogonal spaces to $T_zS$ are the same.
The following Lemma gives an orbifold atlas of $X$ such that a tubular neighborhood of any surface $S \subset \S^*$ inherits an
atlas of an orbifold disc-bundle with structure group in $\U(1)$.

\begin{lemma} \label{lem:atlas bundle}
The symplectic orbifold $(X,\o)$ admits an atlas $\cA$ such that
for any $S \subset \S^*$, some neighborhood $D_{\e_0}(\bar S)$ of $\bar{S}$ in $X$ 
admits an open cover $D_{\e_0}(\bar S)=\cup_\a V_\a$ such that 
for each $\a$ there is an orbifold chart
$(U_\a,V_\a,\G_\a,\phi_\a,\o_\a) \in \cA$, satisfying:
\begin{enumerate}
\item If $V_\a \cap \S^1= \emptyset$, then $U_\a=S_\a \x B_{\e_0}$ is a product, 
with $S_\a \subset S$ open, $D_{\e_0} \subset \CC$ a disc, and the group
$\G_\a=\G$ is the isotropy group of the surface $S$.
For any other $V_\b$ with $V_\b \cap \S^1= \emptyset$, the orbifold change of charts are given by
$$
\psi_{\a \b}=(\psi^1_{\a \b},\psi^2_{\a \b}) \colon U_\a \to U_\b \quad , \quad (z,w) \mapsto (\psi^1_{\a \b}(z),A_{\a \b}(z)w)
$$
with 
$$
A_{\a \b} \colon S_\a \to \U(1) \quad , \quad z \mapsto A_{\a \b}(z)
$$ 
a smooth function taking values in the unit circle $\U(1)$. 
The group $\G < \U(1)$ acts in $U_\a$ and $U_\b$ by a rotation in $D_{\e_0}$,
in particular it is isomorphic to $\ZZ_m$.

\item For each $p \in \S^1 \cap \bar{S}$ denote $V_p$ 
an open set of the cover that contains $p$.
Then the corresponding chart $(U_p,V_p,\G_p,\phi_p,\o_0)$ satisfies that 
$H_p \x D_{\e_0} \subset U_p$ with $\phi_p(H_p \x \{0\}) = \bar{S} \cap V^p$,
and if $V_\a$ does not contain $p$ the change of charts is given by
$$
\psi_{\a p} \colon U_\a \to U_p \quad , \quad (z,w) \mapsto (\psi^1_{\a p}(z),A_{\a p}(z)w)
$$
with $A_{\a p}(z) \in \U(1)$, and its image is 
$\psi_{\a p}(U_\a)=H_\a \x D_{\e_0} \subset H_p \x D_{\e_0}$, 
with $\phi_p(H_\a \x \{0\}) = S \cap V_\a \cap V_p$.
If we denote $\rho_{\a p} \colon \G_\a = \G \hookrightarrow \G_p$ the associated monomorphism
of isotropy groups, then the subgroup $\rho_{\a p}(\G) < \G_p$ 
acts on $H_\a \x D_{\e_0}$ as a rotation in $D_{\e_0}$.

\end{enumerate}
\end{lemma}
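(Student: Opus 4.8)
The goal is to produce an orbifold atlas in which a tubular neighborhood of each isotropy surface $S \subset \S^*$ looks like a disc bundle with structure group in $\U(1)$, with the two stated normal forms for charts away from and near the special points $p \in \S^1$. The plan is to build this atlas from the almost Kähler structure provided by Lemma \ref{lem:orbi-almost-kahler}, exploiting the observation already recorded above that $TS^{\perp\o}=TS^{\perp g}$ along $S$. Because $\o$ restricts nondegenerately to $TS$ and to its $g$-orthogonal complement, the normal bundle $\nu(S)$ inherits simultaneously a complex line-bundle structure (from $J$) and a symplectic/Hermitian structure; its structure group thus reduces to $\U(1)$. The exponential map of $g$ (equivalently, a symplectic tubular neighborhood à la Weinstein, but here I prefer the Kähler exponential so as to keep charts compatible with $\frj$) identifies a disc bundle in $\nu(S)$ with a neighborhood $D_{\e_0}(\bar S)$ of $\bar S$.

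First I would treat the charts of type (1), away from $\S^1$. Over an open $S_\a \subset S$ on which $\nu(S)$ is trivialized by a $g$-orthonormal, $J$-complex frame, the tubular-neighborhood map gives a chart $U_\a = S_\a \times D_{\e_0}$ in which the group $\G$ (the common isotropy of the surface, which by Corollary \ref{cor:isotropy unitary} and the local analysis of $\S^*$ lies in $\U(2)$ and acts trivially on $TS$, hence by a rotation of the normal disc) appears as a cyclic subgroup $\ZZ_m < \U(1)$. Two such trivializations of the \emph{same} Hermitian line bundle differ by a transition function valued in $\U(1)$; this immediately yields the change of charts $\psi_{\a\b}(z,w)=(\psi^1_{\a\b}(z),A_{\a\b}(z)w)$ with $A_{\a\b}\colon S_\a\to\U(1)$. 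The only care needed is that these transition maps commute appropriately with the $\ZZ_m$-action, which holds because the rotation action is central in $\U(1)$.

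Next, for each $p \in \S^1\cap\bar S$ I would start from the flat Kähler chart $(U_p,V_p,\phi_p,\G_p,\o_0,g_0,\frj)$ guaranteed by part (2) of Lemma \ref{lem:orbi-almost-kahler}, in which $S$ lifts to a \emph{complex} curve $\phi_p^{-1}(S)$. Since this curve is $\frj$-holomorphic and passes through the origin, I can adapt holomorphic coordinates so that it becomes $H_p\times\{0\}$, writing the chart as $H_p\times D_{\e_0}\subset U_p$; the disc factor is the normal direction and $\rho_{\a p}(\G)$ acts on it by rotation, again landing in $\U(1)$, because the surface's isotropy fixes $S$ pointwise and acts as a complex reflection in the normal line. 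Matching this chart with a nearby product chart $U_\a$ over $S\cap V_\a\cap V_p$ reduces once more to comparing two $\U(1)$-trivializations of the normal line bundle, giving $\psi_{\a p}(z,w)=(\psi^1_{\a p}(z),A_{\a p}(z)w)$ with the stated image $H_\a\times D_{\e_0}$.

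The main obstacle is compatibility at the special points $p\in\S^1$: one must ensure that the product structure $S_\a\times D_{\e_0}$ coming from the generic charts glues to the chart $U_p$ through transition maps that are genuinely of the fibered unitary form, and that the monomorphisms $\rho_{\a p}$ of isotropy groups are respected. The key point making this work is that the normal bundle carries a single Hermitian structure and that $J$ is standard in the flat charts, so all overlaps reduce to changes of Hermitian frame, which are automatically $\U(1)$-valued. I would therefore organize the proof so that the disc bundle structure on $\nu(S)$ is fixed \emph{once} globally along $\bar S$ (using that $\bar S$ is compact, or has compact closure), and then all charts, both generic and those at $\S^1$, are read off as local trivializations of this one bundle; the change-of-chart formulas and the rotation action of the isotropy groups then follow uniformly.
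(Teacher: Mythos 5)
Your proposal is correct and arrives at the same chart structure as the paper, but it handles the central technical difficulty by a genuinely different mechanism. The paper likewise starts from the almost K\"ahler structure of Lemma \ref{lem:orbi-almost-kahler}, linearizes the existing orbifold transitions in the normal direction (taking $A'_{\a \b}(z)=\bd_w \f^2_{\a \b}|_{(z,0)}$) and uses the exponential map to pass to product charts; at that stage, however, its transition matrices are only symplectic, and orthonormalizing the frames by pointwise matrices $P_\a(z) \in \Sp(2,\RR)$ makes the isotropy group act through the varying conjugates $P_\a(z) \, \G \, P_\a(z)^{-1}$. Restoring the constant rotation action while keeping unitary transitions is exactly what the retraction $r \colon \Sp(2,\RR) \to \U(1)$ of (\ref{eqn:r(A)}) and Lemma \ref{lem:retrac} are used for. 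You sidestep this entirely: by trivializing the Hermitian normal line bundle with unit-length $J$-complex frames from the outset, your transitions are $\U(1)$-valued by construction, and both the equivariance of the transitions and the constancy of the fiberwise rotation are automatic because $\U(1)$ is abelian (together with finiteness of $\G$, which makes the fiberwise multiplier locally constant). This buys a cleaner and more conceptual argument, but it exploits that the fibers are complex one-dimensional; the paper's retraction argument is the one that would survive in higher codimension, where $\U(k)$ is non-abelian and conjugation by frame changes does not cancel by itself. Your treatment of the charts at $p \in \S^1$ matches the paper's (Remark \ref{rem:rotation}); the one point you should make explicit is that the coordinate adaptation at $p$ must be a \emph{unitary} rotation, which is possible because the lift of $\bar S$ is a complex line (the fixed locus of a unitary subgroup), and that the full flat structure $(g_0,\frj,\o_0)$, not only $\frj$, is standard in that chart: this is what makes the exponential map affine there, so that $(U_p,\phi_p)$ is itself one of your bundle trivializations and the transition $\psi_{\a p}$ is fibered with image the product $H_\a \x D_{\e_0}$.
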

\begin{proof}
Consider an orbifold almost K{\"a}hler structure $(\o,g,J)$ on $X$ 
as in Lemma \ref{lem:orbi-almost-kahler}.
To see (1), take an initial cover $\cup_\a V_\a$ of $S$ 
with orbifold charts $(U'_\a, V_\a,\G, \o_0,g_\a,J_\a)$ such that $V_\a \cap \S^1=\emptyset$. 
Let $(z,w)$ be coordinates in $U'_\a$, such that $S_\a=S \cap U'_\a=\{w=0\}$.
Recall that we have for $z=(z,0) \in S_\a$ an identification $(T_z S_\a)^{\perp}=\{z\} \x \CC$.
The change of charts are given by
\begin{align*}
\f_{\a \b} \colon \, & U'_\a \to U'_\b \\ 
            & (z,w) \mapsto (\f^1_{\a \b}(z,w),\f^2_{\a \b}(z,w)=(z',w'))
\end{align*}
with $\f^2_{\a \b}(z,0)=0$ for all $(z,0) \in S_\a$.
Consider now $U_\a=S_\a \x \CC$, and the maps
\begin{align*}
\phi_{\a \b} \colon \, & U_\a=S_\a \x \CC \to U_\b=S_\b \x \CC \\
               & (z,u) \mapsto (\phi^1_{\a \b}(z),A'_{\a \b}(z)u)=(z',u')
\end{align*}
with $\phi^1_{\a \b}(z)=\f^1_{\a \b}(z,0)$, and $A'_{\a \b}(z)=\bd_w \f^2_{\a \b}|_{(z,0)}$.
Here $\bd_w \f^2_{\a \b}$ stands for the Jacobian matrix of $\f^2_{\a \b}$ in the variable $w$.
Now we use the exponential map to identify $U'_\a$ and $U_\a=S_\a \x D_\e$, where $D_\e \subset \CC$
is a small disc. To this end let us consider the maps 
$$
e_\a \colon U_\a=S_\a \x D_\e \to U'_\a \quad , \quad (z,u) \mapsto \exp_z(u)=(z,w)
$$
which are diffeomorphisms $\e \leq \e_0$, maybe reducing $U'_\a$.
The induced action of the group $\G$ in $U_\a=S_\a \x D_{\e}$ 
is given by complex multiplication in $D_{\e}$.
Now, it is easy to check that the maps $\p_{\a \b}$ are the induced change of charts 
with respect to the new coordinates $(z,u)$ and $(z',u')$ in $U_\a$ and $U_\b$.
In other words, $\phi_{\a \b} =e_\b^{-1} \circ \f_{\a \b} \circ e_\a.$
Hence we can take the maps $\p_{\a \b}$ as new orbifold change of charts.
The matrices 
$$
A'_{\a \b}(z) \colon ((T_zS_\a)^{\perp}, h_\a|) \to ((T_{z'}S_\b)^{\perp}, h_\b|)
$$
are isometries with respect to the orbifold hermitian metrics 
$h_\a=g_\a + \o_0(\cdot, J_\a \cdot)$ and $h_\b=g_\b + \o_0(\cdot, J_\b \cdot)$
restricted to the orthogonal spaces to $S$ (we use the notation $h_\a|$ to express this restriction). 
In particular $A'_{\a \b}(z) \in \Sp(2)$ are symplectic matrices.
Take orthonormal basis of $((T_zS_\a)^{\perp}, h_\a|)$
and $((T_{z'}S_\b)^{\perp}, h_\b|)$ so that $h_\a|_z$ and $h_\b|_z$
become the standard hermitian metric $h_0$,
and denote $P_\a(z), P_\b(z') \in \Sp(2,\RR)$ the matrices of change of basis. 
Call the new coordinates $(z,v)=(z,P_\a(z)u)$
and $(z',v')=(z',P_\b(z')u')$.
The change of trivializations in the new coordinates are given by
the matrices
$$
A''_{\a \b}(z)=P_\b(z') \cdot A'_{\a \b}(z) \cdot (P_\a(z))^{-1} \in \U(1).
$$
These matrices are unitary as we want, but the isotropy groups act via 
$$
\G_z=P_\a(z)\cdot \G \cdot P_\a(z)^{-1} \quad , 
\quad \G_{z'}=P_\b(z')\cdot \G \cdot P_\b(z')^{-1}
$$
so they are groups acting non-linearly. To fix this, 
consider $r \colon \Sp(2,\RR) \to \U(1)$ the retraction given in (\ref{eqn:r(A)}).
By Lemma \ref{lem:retrac} we have
$$
\G_z=r(P_\a(z))\cdot \G \cdot r(P_\a(z))^{-1} \quad , 
\quad \G_{z'}=r(P_\b(z')) \cdot \G \cdot r(P_\b(z'))^{-1}.
$$
So if we introduce another coordinates $w$ in $U'_\a$ by $(z,w)=(z,r(P_\a(z))^{-1}v)$ 
and $w'$ in $U'_\b$ by $(z',w')=(z',r(P_\b(z'))^{-1}v')$
then the corresponding transition matrices are given by
$$
A_{\a \b}(z)=r(P_\b(z'))^{-1} \cdot A''_{\a \b}(z) \cdot r(P_\a(z)) \in \U(1)
$$
and moreover the varying groups $\G_z$ and $\G_{z'}$
become $\G$ again. This shows what we wanted.
The sought transition maps $\psi_{\a \b}$ are given by $\psi^1_{\a \b}(z)=\phi^1_{\a \b}(z)$ 
and $\psi^2_{\a \b}(z,w)=A_{\a \b}(z)w$

Now let us see (2). Suppose that $S$ accumulates at $p\in \S^1$, and let $(U,V,\p,\G)$ 
a chart around $p=\p(0)$ with coordinates $(z,w)$ 
such that $(\o,g,J)$ is the standard K{\"a}hler structure in this chart. 
After a complex rotation on $U$ (which preserves the whole structure) 
we can suppose that $\bar{S} \cap V = \p(\{w=0\})$.
In this case, $e_U(z,w)=(z,w)$ so that $(U,V,\p,\G)$ remains invariant 
after the process described before.
\end{proof}

\begin{remark} \label{rem:rotation}
The proof of this Lemma shows that,
given the K{\"a}hler chart $\p \colon U_p \to V_p$ of a point $p \in \S^1$,
the atlas for the tubular neighborhood $D_{\e_0}(S)$ of a singular surface $S$ with $p\in \overline{S}$ can be constructed making a complex
rotation of the preimage $\p^{-1}(V_p \cap D_{\e_0}(S))$ so that $S=\{w=0\}$.
\end{remark}

\begin{remark} \label{rem:restriction}
Near $p \in \S^1 \cap \bar{S}$ one can define a \emph{compatible orbifold chart} from $(U_p,V_p,\G_p,\p_p)$: 
we let $\e_p>0$ be such that  $B_{3\e_p}(p)\subset V_p$ and let $\e_0>0$ such that 
$$
\p_p ((B_{3 \e_p}(0) -  B_{\e_p}(0)) \cap (\CC \x D_{\e_0})) \subset X- \cup_{S' \neq S} D_{\e_0}(\bar{S'}).
$$
There is a compatible orbifold chart $(A_p^3,V_{A_p^3},\tilde \G, \p_p)$ 
with $A^3_p=(B_{3 \e_p}(0) -  B_{ \e_p}(0)) \cap (\CC \x D_{\e_0})$, $V_{A_p^3}=\phi_p(A_p^3)$ 
and $\tilde \G=\{ \g \in \G_p \mbox{ s.t. } \g(z,0)=(z',0) \}< \U(1)\x \U(1)$.

Moreover, if $\G_S$ is the isotropy of $S$ then $A_p/\G_S \to A_p/\tilde \G$ is a covering with Deck group $\tilde{\G}/\G_S$. In addition, given $\p_p(z,0)\in U_{A_p^3}$ one can restrict sufficiently the previous chart to obtain an orbifold chart of $X$ with isotropy $\G_S$.
\end{remark}

\begin{remarks} \label{rem:almost standard}
\begin{enumerate}
\item The symplectic forms $\o_\a=e_\a^*\o_0$ of the atlas above
may not be standard in the charts $U'_\a=S_\a \x D_\e$,
but they are standard at the points of $S$, so we have in coordinates
$(z,w) \in U'_\a$ the expression
$$
\o_\a=-\tfrac{\ii}{2} (dz \wedge d \bar z + dw \wedge d \bar w) + O(|w|).
$$
\item The atlas $\cA$ constructed above can be refined so that for any $p \in \S^1$
and any neighborhood $W^p$ of $p$ in $X$, there is an orbifold chart $(U^p,V^p,\p_p,\G_p)$
in $\cA$ with $p \in V^p \subset W^p$. 
Also, we can assume that only one of the open sets of the atlas contains the point $p \in \S^1$.
\end{enumerate}
\end{remarks}

Consider an orbifold almost K{\"a}hler structure $(X,\o,g,J)$ of Lemma
\ref{lem:orbi-almost-kahler}.
Let $S \subset \S^*$ be an isotropy surface, and $D_{\e_0}(\bar S)$ a neighborhood 
of $\bar{S}$ in $X$ as in Lemma \ref{lem:atlas bundle}, 
with an open cover $D_{\e_0}(\bar S)=\cup_\a V_\a$ and orbifold charts $(U_\a,V_\a,\G_\a,\phi_\a,\o_\a)$.
For $p \in \bar{S} \cap \S^1$ let $(U_p,V_p,\G_p,\phi_p,\o_0)$ be the unique orbifold chart covering $p$.
Denote $\pi \colon D_{\e_0}(\bar S) \to \bar{S}$ the projection.
The following Lemma shows the existence of an orbifold connection $1$-form
in $D_{\e_0}(\bar S)- (\cup_{p\in \S^1\cap \bar{S}} B_{\e_p}(p) \cup \bar{S})$, where $\e_p$ verifies that $B_{3\e_p}(0)\subset U_p$ and $3\e_p<\e_0$.

\begin{lemma} \label{lem:connection}
Notations as above.
There exists an orbifold $1$-form $\eta=\eta_S \in \O^1_{orb}(D_{\e_0}(\bar S)- (\cup_{p\in \S^1\cap \bar{S}} B_\d(\e_p) \cup \bar{S}))$ such that:
\begin{enumerate}
\item If $V_\a \cap \S^1=\emptyset$, the liftings $\eta_\a$ 
in the orbifold charts $U_\a=S_\a \x D_\e$ have the form
$\eta_\a= d \h + \pi^* \nu_\a$ for $\nu_\a \in \O^1(S_\a)$, 
with $\h$ the angular coordinate in $D_{\e_0}$.

\item For $p \in \S^1 \cap \bar S$, let $H_p \x D_{\e_0} \subset U_p$ with $\phi(H_p \x \{0\})=\bar{S} \cap V_p$.
Then, the lifting of $\eta$ in $U_p-B_{\e_p}(p)$ equals $d \h$ in $V_{A^2_p}$, with $V_{A_p^2}=\phi_p(A^2_p)$ and $A^2_p=(B_{2 \e_p}(0) -  B_{ \e_p}(0)) \cap (\CC \x D_{\e_0'})$.
\end{enumerate}
\end{lemma}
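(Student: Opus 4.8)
The plan is to build $\eta$ as a connection $1$-form for the $\U(1)$ disc-bundle structure furnished by Lemma \ref{lem:atlas bundle}, modified so that it becomes \emph{flat} (equal to the fibre angular form) on the annuli around the points of $\S^1$. Recall from that Lemma that the changes of charts of the tubular neighbourhood $D_{\e_0}(\bar S)$ have the fibrewise form $w \mapsto A_{\a\b}(z)\,w$ with $A_{\a\b}\colon S_\a \to \U(1)$. Writing $A_{\a\b}=e^{\ii \chi_{\a\b}}$, the real closed base $1$-forms $d\chi_{\a\b}=\tfrac{1}{\ii}A_{\a\b}^{-1}dA_{\a\b}$ are globally well defined on the overlaps and, because the $A_{\a\b}$ satisfy the usual cocycle condition and $\U(1)$ is abelian, they satisfy $d\chi_{\a\g}=d\chi_{\a\b}+d\chi_{\b\g}$ on triple overlaps.

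In each chart I would use the fibre coordinate $w=r\,e^{\ii\h}$ to obtain the angular $1$-form $d\h$, which is smooth on the chart away from the zero section $\bar S=\{w=0\}$ and is invariant under the isotropy, since the latter acts on the fibre by rotations. Under a change of charts $w_\b=A_{\a\b}(z)w_\a$ one has $\h_\b=\h_\a+\chi_{\a\b}$, hence
\[
 d\h_\b=d\h_\a+\pi^* d\chi_{\a\b}.
\]
Thus the local forms $d\h$ fail to glue by exactly the cocycle $d\chi_{\a\b}$, which I will correct with connection potentials $\nu_\a\in\O^1(S_\a)$.

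First I would refine the atlas, using Remarks \ref{rem:almost standard}, so that every $p\in\S^1\cap\bar S$ lies in a single chart $U_p$; then choose an orbifold partition of unity $\{\m_\d\}$ on $\bar S$ subordinate to the induced cover of $\bar S$ (Proposition \ref{prop:partition unity}), arranged so that $\m_p\equiv 1$ on $\pi(A^2_p)$ for each such $p$. This last arrangement is possible precisely because only $U_p$ meets $p$ and, by Remark \ref{rem:restriction}, $S_p$ contains the whole neighbourhood $\pi(A^3_p)$ while the annuli $A^3_p$ for distinct points are disjoint and separated from the other isotropy surfaces. Setting $\nu_\g=-\sum_\d \m_\d\, d\chi_{\d\g}\in\O^1(S_\g)$ and $\eta:=d\h+\pi^*\nu_\g$ in each chart, the cocycle relation gives $\nu_\a-\nu_\b=d\chi_{\a\b}$, whence $d\h_\a+\pi^*\nu_\a=d\h_\b+\pi^*\nu_\b$; so $\eta$ is a globally well-defined orbifold $1$-form on $D_{\e_0}(\bar S)-(\cup_{p}B_{\e_p}(p)\cup\bar S)$, orbifold because both $d\h$ and $\pi^*\nu_\g$ are invariant under the fibre rotations.

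Property (1) then holds by construction: on a product chart $U_\a$ with $V_\a\cap\S^1=\emptyset$ one has $\eta_\a=d\h+\pi^*\nu_\a$ with $\nu_\a\in\O^1(S_\a)$. For property (2), on $\pi(A^2_p)$ only the term $\m_p$ survives and $d\chi_{pp}=0$, so $\nu_p=0$ there and $\eta=d\h$ on $V_{A^2_p}$. The main obstacle is exactly this last compatibility at the points of $\S^1$: forcing the connection to be flat on the annuli $A^2_p$ while keeping it of the prescribed type $d\h+\pi^*\nu_\a$ on every product chart. This is what dictates the adapted choice of partition of unity and is feasible only thanks to the refinement that isolates each $p$ in one chart and to the geometric separation provided by Remark \ref{rem:restriction}.
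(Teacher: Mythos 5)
Your proposal is correct and is essentially the paper's own argument: the paper defines $\eta=\sum_\a \pi^*\rho_\a\cdot\pi_\a^*(d\h)$, a partition-of-unity average of the local angular forms, which in any fixed chart $U_\g$ equals exactly your $d\h+\pi^*\nu_\g$ with $\nu_\g=-\sum_\d\rho_\d\,d\chi_{\d\g}$ (via $\pi_\d^*(d\h)-\pi_\g^*(d\h)=\pi^*d\chi_{\g\d}$), and property (2) is obtained in the same way, by arranging the cover so that $V_{A_p^2}$ is met only by the chart at $p$. The one check you pass over too quickly is invariance near $p\in\S^1$: there the isotropy is $\tilde\G<\U(1)\x\U(1)$ from Remark \ref{rem:restriction}, acting by rotations in the base coordinate as well as in the fibre, so one should note (as the paper does explicitly) that $d\h$ is also preserved by the maps $(z,w)\mapsto(e^{2\pi\ii/k}z,w)$.
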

\begin{proof}
Consider $\pi_\a \colon S_\a \x D_{\e_0} \to D_{\e_0}$, and the angular function $\pi_\a^*\h$ which measures the angle in each fiber $D_{\e_0}$. 
We have that $\pi_\a^* \h- \pi_\b^*\h=\pi^*\xi_{\a \b}$ in the intersections,
being $\xi_{\a \b}=\xi_{\a \b}(z)$ a function on $S$.

The $1$-forms 
$d \pi_\a^* \h - d \pi_\b^*\h=\pi^* d \xi_{\a \b}=\pi^* \nu_{\a \b}$
are $\G$-invariant since $\G$ acts on the angle $\h$ as a translation
in the charts. The argument carries also on the chart $(A_p^3,V_{A_p^3},\tilde{\G},\p_p)$ defined on Remark \ref{rem:restriction}; the angular form is $\tilde{\G}$-invariant because each element of $\tilde{G}$ can be expressed as the composition of a map $(z,w) \to (e^{\frac{2\pi i}{k}}z,w)$ which preserves the angle, 
and a map that acts on the angle $\h$ as a translation.

We take a cover of $\bar{S}- \cup_{p \in  \S^1 \cap \bar{S}}B_\d(\e_p)$ 
formed by coordinate open sets and such that all points of $V_{A_p^2}$ are covered only by $V_{A_p^3}$. 
We denote it by $\{V_\a\}_{\a\in\D}$.
Now, taking a partition of unity $\rho_\a$ subordinated to the cover $\{V_\a\}$
we can define $\eta_\a=\sum_\a \pi^* \rho_\a \cdot \pi_\a^* (d \theta)$.
If we fix a chart $U_\b=S_\b \x D_{\e_0}$, then the lifting of $\eta$ to $U_\b$ is given by 
$$
\eta|_{U_\b} = \sum_\a \pi^* \rho_\a \cdot (\pi_\b^* (d \h) + \pi^* \nu_{\a \b})
=\pi_\b^* (d \h) + \sum_\a \pi^* (\rho_\a \cdot \nu_{\a \b}).
$$
This proves that $\eta$ restricts to $d \h$ on each fiber and (1). 

Take $p \in \S^1$. Since points on $\p_p((B_{2 \e_p}(0) -  B_{ \e_p}(0)) \cap \CC \x D_{\e_0})$ 
are covered by a unique open set of the covering, the connection is trivial over it. This proves $(2)$.
\end{proof}

\section{Resolution.} \label{sec:resolution}
In this section we shall be explicit about the atlas that we consider in the space $X$. 
Let $\cA$ the orbifold atlas of $X$ that verifies Lemma \ref{lem:atlas bundle} 
and denote the symplectic orbifold structure by $(X,\o,\cA)$.
 
First observe that we can suppose that $\S^0= \emptyset$ because we can use the method described in \cite{CFM} to resolve isolated singularities; we briefly describe it in subsection \ref{ss:isolated-singularities}. In order to perform the resolution we first endow $X$ with the structure of a symplectic orbifold $(X,\widehat{\mathcal{A}},\widehat{\o})$ without changing the underlying topological manifold. The isotropy points of the new structure are isolated and consist of $\S^0 \cup \S^1$. For that purpose, we first construct a manifold atlas of $X-\S^1$ and replace $\o$ with a closed $2$-form $\o_a^*$, which is zero on a closed neighbourhood of $\S^1$ and symplectic out of it. After this we extend the orbifold structure to $\S^1$ to obtain the desired orbifold structure $(X,\widehat{\mathcal{A}})$. The orbifold form $\o'$ naturally extends to $(X,\widehat{\mathcal{A}})$; we finally use a gluing lemma (see Lemma \ref{lem:gluing_forms}) to contruct $\widehat{\o}$. 

The extension process is inspired in Lemma \ref{lemma:trivial}. 
Following its notation, if $p\in \S^1$ and $(U,V,\G,\p)$ is an orbifold chart then $V=U/\G= (U/\G^*)/\G'$. An holomorphic homeomorphism $H\colon U/\G^* \to \widehat U \subset \CC^2$ allows us to resolve the singularities of $\S^*\cap V$; and $\widehat U/\G'$ has an isolated singularity at $0$. This structure must be compatible with the atlas defined on $X-\S^1$; for that reason we resolve the singularities on $\S^*$ using complex transformations. Riemman extension theorem will ensure the compatibility of both structures out of $\S^1$.

We finally resolve the isolated isotropy locus of 
$(X,\widehat{\mathcal{A}},\widehat{\o})$ using again the method of \cite{CFM}. 
This process yields a resolution of $(X,\mathcal{A},\o)$ as follows:

\begin{theorem} \label{thm:main-thm-bis}
Let $(X, \o)$ be a symplectic $4$-orbifold such that 
the closure of each conected component $S\subset \S^*$ is  compact.
There exists a symplectic manifold $(\tilde{X}, \tilde{\o})$ and a smooth map 
$\pi \colon (\tilde{X},\tilde{\o}) \to (X,\o)$
which is a symplectomorphism outside an arbitrarily small neighborhood of the isotropy set of $X$.
\end{theorem}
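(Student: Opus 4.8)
The plan is to execute the two-stage reduction sketched above: first convert the non-isolated part $\S^*$ of the isotropy set into a smooth locus, at the cost of creating only isolated orbifold singularities at the crossing points $\S^1$, and then appeal to the known resolution of isolated singularities. I begin by disposing of $\S^0$. Its points are isolated, and around each the isotropy group $\G<\U(2)$ acts freely on $\CC^2-\{0\}$, so the flat complex local models and the symplectic resolution of \cite{CFM} apply directly; carrying this out in disjoint Darboux balls (a locally finite family) reduces us to $\S=\S^*\cup\S^1$. Throughout I fix the almost K\"ahler structure of Lemma \ref{lem:orbi-almost-kahler}, so that around every $p\in\S^1$ the chart is flat K\"ahler and each surface $S$ accumulating at $p$ lifts to a complex coordinate line.

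Next I build a manifold structure on $X-\S^1$ resolving $\S^*$. Fix $S\subset\S^*$ with isotropy $\G_S\cong\ZZ_m$ and the disc-bundle atlas of Lemma \ref{lem:atlas bundle}, with fibre coordinate $w$ and transitions $(z,w)\mapsto(\psi^1_{\a\b}(z),A_{\a\b}(z)w)$, $A_{\a\b}(z)\in\U(1)$. The fibrewise branched cover $w\mapsto w^m$ descends the $\ZZ_m$-quotient to a genuine disc and turns the transitions into $A_{\a\b}(z)^m$, producing a smooth $\U(1)$ disc-bundle atlas on a punctured tubular neighbourhood of $S$; this is the manifold atlas of $X-\S^1$. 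The pushed-forward orbifold form degenerates along the zero section $S$, so it is not symplectic there. To repair this I would, following \cite{MR}, glue in a fibrewise-standard model form built from the connection $1$-form $\eta=\eta_S$ of Lemma \ref{lem:connection}, of minimal-coupling type $\pi^*\o_S+d(s(r)\eta)$ with $r=|w|$, interpolating it with $\o$ near the boundary $r=\e_0$ by a Thurston-type inflation and keeping the result positive (tame) with respect to $J$; compactness of $\bar S$ is what lets me do this with uniform control along $S$. Multiplying by cut-offs in $r$ and near each $p\in\S^1$ then yields a closed $2$-form $\o_a^*$ that is symplectic on $X-\S^1$ and identically zero on a closed neighbourhood of $\S^1$.

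Now I extend the orbifold structure across $\S^1$. For $p\in\S^1$ with chart $(U,V,\G,\p)$, Lemma \ref{lemma:trivial} factors $V=(U/\G^*)/\G'$ and provides the holomorphic homeomorphism $H\colon U/\G^*\to\widehat U\subset\CC^2$; since $\G^*$ is generated by complex reflections, $\widehat U$ is smooth and $\G'=\G/\G^*$ acts on it with an isolated fixed point at $0=H(p)$. Declaring $(\widehat U,\G')$ to be the chart near $p$, and using the resolved disc-bundle charts elsewhere, defines a new atlas $\widehat{\mathcal A}$ whose only singular points are the isolated set $\S^1$. The main obstacle is the compatibility of the two descriptions on their overlap: the charts built by the branched covers $w\mapsto w^m$ must agree, as charts of $\widehat{\mathcal A}$, with those induced by $H$ away from $0$. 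This works precisely because both $H$ and the surface resolution $w\mapsto w^m$ are holomorphic, so on the punctured overlap the two smooth structures are biholomorphic; Riemann's extension theorem guarantees that the identification extends holomorphically across the removed divisor, making $\widehat{\mathcal A}$ a genuine orbifold atlas. The image $\o'$ of $\o$ under this resolution is symplectic in a neighbourhood of each $p$ in $(X,\widehat{\mathcal A})$, and I finally invoke the gluing lemma (Lemma \ref{lem:gluing_forms}) to interpolate between $\o_a^*$ (symplectic off $\S^1$, zero near it) and $\o'$ (symplectic near $\S^1$), obtaining a symplectic orbifold form $\widehat\o$ on $(X,\widehat{\mathcal A})$ that agrees with $\o$ outside an arbitrarily small neighbourhood of $\S$.

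At this stage $(X,\widehat{\mathcal A},\widehat\o)$ is a symplectic $4$-orbifold whose isotropy set is a finite set of isolated points $\S^0\cup\S^1$, each with a flat K\"ahler model. Resolving these by \cite{CFM} produces the smooth symplectic manifold $(\tilde X,\tilde\o)$ together with $\pi\colon\tilde X\to X$. Since every modification was confined to an arbitrarily small neighbourhood of $\S$, the map $\pi$ is a symplectomorphism outside an arbitrarily small neighbourhood of $\S$, which is the assertion. I expect the compatibility step of the third paragraph --- reconciling the branched-cover resolution of the divisors with the $H$-resolution at the crossings --- to be the crux, the remaining ingredients being a careful but standard orchestration of inflation, cut-offs and gluing of symplectic forms.
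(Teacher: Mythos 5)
Your overall architecture is the same as the paper's: resolve $\S^0$ by \cite{CFM}, desingularize $\S^*$ away from the crossings by the fibrewise branched covers $w \mapsto w^m$, extend the orbifold structure across $\S^1$ via the invariant-polynomial map $H$ together with the Riemann extension theorem, glue the symplectic forms, and finish with \cite{CFM} at the remaining isolated points. The topological and holomorphic parts of your argument (the branched-cover atlas, and the compatibility at $\S^1$ via Riemann extension) do match the paper's proof. The genuine gap is in the symplectic step near $\S^1$, in two linked places.

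First, your construction of $\o_a^*$ by ``multiplying by cut-offs in $r$ and near each $p\in\S^1$'' does not work: multiplying a closed form by a cut-off destroys closedness, and no interpolation between a symplectic form and the zero form can remain non-degenerate, so there is necessarily a transition annulus around each $p$ where $\o_a^*$ is degenerate. For the later application of Lemma \ref{lem:gluing_forms} you need $\o_a^*$ to be closed \emph{and} $\frj$-semipositive on that annulus, and this semipositivity is the crux, not a formality. The paper secures it by designing $\o_a$ near $p$ to be $\pi^*\o_S - \frac{1}{4}dJ_{\cV}d(r^{2m}+a^2)^{\frac{1}{m}}$, i.e.\ an explicit $\frac{\ii}{2}\bd\bar\bd$-expression admitting a $\G_p$-invariant K\"ahler potential $F_a$ (Proposition \ref{prop:kahler-potential}), and then replacing $F_a$ by $h\circ F_a$ with $h$ convex, non-decreasing and vanishing below a level $t_0$ (Lemma \ref{lem:extension}); the resulting $\o_a^*=\frac{\ii}{2}\bd\bar\bd(h\circ F_a)$ is closed, vanishes on $B_{\e_p}(p)$, and is $\frj$-semipositive in between. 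Nothing in your proposal provides this mechanism; note also that the same explicit choice of profile is what makes the push-forward of $\o_a$ under $w\mapsto w^m$ smooth and non-degenerate across $S$, a property your unspecified ``minimal-coupling'' profile $s(r)$ must satisfy but which you leave to \cite{MR}, where the interaction with $\S^1$ never arises. Second, the form you glue in at $p$, ``the image $\o'$ of $\o$ under this resolution,'' does not exist as a symplectic (or even smooth) form near $p$ in $\widehat{\cA}$: near $p$ the resolution map is a branched cover, and the push-forward of a symplectic form under a branched cover is singular along the branch divisors (already for $z\mapsto z^m=w$ on $\CC$ the standard form becomes $\frac{\ii}{2m^2}|w|^{\frac{2}{m}-2}\,dw\wedge d\bar w$), and these divisors pass through every neighbourhood of $p$. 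The paper instead glues $\widehat{\o}_a^*$ with a small multiple of the \emph{standard flat form} $\o_0$ of the new chart $\widehat U\subset\CC^2$, a form unrelated to $\o$, which is exactly where the vanishing and semipositivity of $\widehat{\o}_a^*$ are consumed by Lemma \ref{lem:gluing_forms}. With these two repairs your outline would coincide with the paper's proof.
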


\subsection{Resolution of isolated singularities}\label{ss:isolated-singularities}

We briefly outline the process of resolving an isolated singularity, which can be found in \cite{CFM}. As one should observe, this method is valid for symplectic orbifolds of arbitrary dimension; but we restrict to the case that the dimension is $4$.

Take an isolated singular point $p\in \S^0$ and a K{\"a}hler Darboux chart $(U,V,\G,\p,\o_0, \frj)$
around $p$, with $V \cong U/\G$, $\G <\U(2)$. 
The space $U/\G$ is an affine variety because one can consider $\la P_1,\dots, P_N \ra$ 
a basis of the finitely generated $\CC$-algebra of polynomials that are invariant by the action of $\G$, and define the holomorphic embedding:
$$
\iota \colon \CC^2 /\G \to \CC^N, \qquad \iota(x)=(P_1,\dots,P_N)(x).
$$
One can use the model $\iota(\CC^2/\G)$ to perform the resolution of singularities; 
this obtained by a finite number of blow-ups. Thus the resolution $b \colon F \to \iota(\CC^2/\G)$ is quasi-proyective and consequently K\" ahler. We shall denote by $\o_F$ the symplectic form on the resolution.

Then we replace $B_\e(p)=\p(B_\e(0)) \subset V$ by a small ball around the exceptional set
$E=b^{-1}(0)$ in $F$; that is, define:
$$
X'=(X-B_{\e}(p)) \cup_{\bar{\phi} \circ b} b^{-1}(B_\e(0)/\G),
$$
To endow $X'$ with a symplectic form we interpolate $b^*\o_0$ and $\l \o_F$ on $A=b^{-1}(B_{3\d}(0)-B_{\d}(0)/\G)$, where $\l$ is small enough. The interpolation is allowed due to the fact that $A$ is a lens space and thus $H^2(A,\RR)=0$; in order to do so one has to replace the K\"ahler potential $r^2$ of $\o_0$ with a radial K\" ahler potential on $\CC^2- B_{\d}(0)$ that vanishes on $\overline{B_{\d}(0)}$ and coincides with $r^2$ on $\CC^2-B_{2\d}(0)$, obtaining a form $\o_1$.
If $d\eta= \o_F - b^*\o_1$ on $A$, one can ensure that if $\l$ is small enough and $\rho$ is a radial bump function which is $1$ on $B_{2\d}(0)$ and $0$ on $\CC^2-B_{3\d}(0)$ , that:
$$
\o_\l= b^*\o_1 + \l d( (\rho \circ b) \eta)
$$
extends to a symplectic form on $b^{-1}(B_{2\e}(0)/\G)$ and interpolates the desired forms; this is simmilar to the gluing process described on Lemma \ref{lem:gluing_forms}. To ensure that $\o_\l$ is symplectic on $\d < r \leq 2\d$ we use the fact that both $b^*\o_1$ and $\o_F$ are positive with respect to the complex structure $\frj_F$ on $F$.

\subsection{Construction of $(X-\S^1, \widehat{\mathcal{A}}, \o')$.}

In this first step we resolve each surface $S\subset \S^*$ separately; working away of $\S^1$. We split the construction in two parts: we first do a preparation on the orbifold $(X,\o, \cA)$ and then change the symplectic orbifold structure.

\subsection*{Preparation}

In order to construct a smooth atlas $\widehat \cA$ of $X-\S^1$ we shall modify $\cA$ around singular surfaces. For this, we use the basic fact that the map $q \colon \CC \to \CC$, $q(z)=z^m$ gives a homeomorphism between $\CC/\ZZ_m$ and $\CC$.
This map applied to the fibers $\{z\} \x D_{\e_0} \subset D_{\e_0}(\bar S)$ yields a manifold atlas of $D_{\e_0}(\bar S)-\S^1\cap \bar{S}$,
hence providing the sought manifold atlas $\widehat{\cA}$ on $X-\S^1$.

But the symplectic form $\o$ is singular on $\S^*$ with respect to the atlas $\widehat \cA$ of $X-\S^1$.  For this reason we replace $\o$ on the orbifold $(X,\cA)$ with a form $\o_a^*$ that is degenerated on each $S\subset \S^*$, but it will be symplectic on the manifold $(X- \overline{B_\e(\S^1)}, \widehat{\cA})$. Here $B(\S^1)$
stands for a neighborhood of $\S^1$ which is a union of balls around each $p\in \S^1$ that are contained in $V_p$, where $(U_p,V_p,\G_p,\o_0)\in \cA$ is the Darboux chart as usual. More precisely, given $p\in \S^1$ the ball is $\p(B_{\e_p}(0))$ where $\e_p>0$ verifies $B_{3\e_p}(0)\subset U_p$. In addition, $\o_a^*=0$ on $\overline{B(\S^1)}$. 

As a first step, we need an orbifold symplectic form $\o^0$ in $X$ which is constant in the fibers of $D_{\e_0}(\bar S)$ for each $S \subset \S^*$. For that purpose we first introduce some notations; let $S$ be an isotropy surface, we denote 
$\pi \colon D_{\e_0}(\bar S) \to \bar S$ the projection. By Lemma \ref{lem:connection}
we have an orbifold connection $1$-form $\eta$ on $D_{\e_0}(\bar S)-\bar S \x \{0\}$ 
which equals $d \h$ in each punctured fiber $\{z\} \x (D_\e - \{0\})$, $z \in \bar S$.
Denote $\o_S= \iota^* \o \in \O^2(S)$ the symplectic form in $S$, 
with $\iota \colon \bar S \hookrightarrow D_{\e_0}(\bar S) \subset X$ the inclusion.

\begin{lemma} \label{lem:omega^0}
For any choice of $\d>0$ small enough, there exists an orbifold symplectic 
form $\o^0=\o^0(\d)$ on $X$ such that $\o^0=\o$ in 
$$
(X- \cup_{S \subset \S^*} D_{2\d}(\bar S))\cup_{p \in \S^1} V_p,
$$
and for every singular surface $S\subset \S^*$,
$\o^0=\pi^* \o_S +  r dr \wedge \eta  + \tfrac{1}{2} r^2 d \eta$ in
$D_{\d}(\bar S)$; where $\pi \colon D_{\d}(\bar S) \to \bar S$ denotes the projection.
\end{lemma}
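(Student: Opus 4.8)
The plan is to construct $\o^0$ by interpolating between the given symplectic form $\o$ and the fiberwise-standard model form $\pi^*\o_S + r\,dr\wedge\eta + \tfrac12 r^2\,d\eta$ inside the tubular neighborhood $D_{\e_0}(\bar S)$, while leaving $\o$ untouched away from the isotropy surfaces. First I would examine the model form $\o_{\mathrm{mod}} := \pi^*\o_S + r\,dr\wedge\eta + \tfrac12 r^2\,d\eta$ and verify it is a closed orbifold $2$-form on $D_{\e_0}(\bar S)$: closedness follows because $\pi^*\o_S$ is closed (pulled back from the closed form $\o_S$ on $\bar S$) and $d(r\,dr\wedge\eta + \tfrac12 r^2\,d\eta) = r\,dr\wedge d\eta - r\,dr\wedge d\eta = 0$, so the whole expression is a genuine closed form, and the connection $1$-form $\eta$ from Lemma \ref{lem:connection} makes this globally well-defined on the punctured neighborhood. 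The key comparison is that near $S$ (i.e.\ as $r\to 0$) the form $\o$ agrees with $\o_{\mathrm{mod}}$ to leading order: by Remarks \ref{rem:almost standard}, in the charts $U'_\a = S_\a \times D_\e$ we have $\o_\a = -\tfrac{\ii}{2}(dz\wedge d\bar z + dw\wedge d\bar w) + O(|w|)$, and in polar coordinates $w = re^{\ii\h}$ the standard fiber form $-\tfrac{\ii}{2}dw\wedge d\bar w = r\,dr\wedge d\h$ matches $r\,dr\wedge\eta + \tfrac12 r^2\,d\eta$ up to the connection correction, while $-\tfrac{\ii}{2}dz\wedge d\bar z$ restricts to $\o_S$ on $S$.

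Next I would carry out the interpolation. Since $\o$ and $\o_{\mathrm{mod}}$ are both closed and represent the same leading-order behavior along $S$, their difference $\o - \o_{\mathrm{mod}}$ is closed on $D_{\e_0}(\bar S)$ and vanishes to first order at $r=0$; I would show it is exact by finding a primitive $\beta$ with $\o - \o_{\mathrm{mod}} = d\beta$ on the (topologically trivial in the fiber direction) neighborhood, choosing $\beta$ to vanish suitably fast as $r\to 0$ so that the glued form stays controlled. Then, fixing a radial bump function $\chi = \chi(r)$ that equals $1$ for $r \le \d$ and $0$ for $r \ge 2\d$, I would set
$$
\o^0 = \o - d\bigl(\chi(r)\,\beta\bigr),
$$
which equals $\o_{\mathrm{mod}}$ on $D_\d(\bar S)$ (where $\chi\equiv 1$ forces $\o^0 = \o - d\beta = \o_{\mathrm{mod}}$) and equals $\o$ outside $D_{2\d}(\bar S)$ (where $\chi\equiv 0$), as required; closedness is automatic. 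The condition $\o^0 = \o$ on $\cup_{p\in\S^1} V_p$ is arranged because the construction is confined to the disc-bundle region away from $\S^1$, using the refinement of the atlas from Remarks \ref{rem:almost standard}(2) so that the interpolation region does not reach the $\S^1$ charts.

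The main obstacle is establishing that $\o^0$ remains \emph{nondegenerate} throughout the interpolation region $\d < r \le 2\d$, not merely closed. Away from $S$ both $\o$ and $\o_{\mathrm{mod}}$ are symplectic, but a convex-type interpolation of two symplectic forms need not stay nondegenerate. The standard remedy, which I would employ, is \emph{positivity with respect to the almost complex structure} $J$: I would verify that both $\o$ and $\o_{\mathrm{mod}}$ are positive (tame) with respect to the orbifold almost Kähler $J$ from Lemma \ref{lem:orbi-almost-kahler}, using that $\o$ is compatible with $J$ by construction and that $\o_{\mathrm{mod}}$ is a small perturbation agreeing with the standard Kähler form to leading order; since the positive cone is convex, any form sufficiently $C^0$-close to an interpolation of positive forms is again positive, hence nondegenerate. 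Making this quantitative for $\d$ small enough — so that the $O(|w|)$ error terms and the primitive $\beta$ are small in the thin annular shell — is the delicate estimate, and it is precisely here that choosing $\d$ small (as the statement permits) does the work.
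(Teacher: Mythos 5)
Your overall strategy --- take the model form $\o' = \pi^*\o_S + r\,dr\wedge\eta + \tfrac{1}{2}r^2\,d\eta$, write $\o'-\o=d\b$ with a primitive decaying as $r\to 0$, glue via a radial bump function, and control nondegeneracy on the shell $\d\le r\le 2\d$ by a $C^0$-smallness estimate --- is essentially the paper's proof. The nondegeneracy step is even simpler than you make it: since $|\o^0-\o|=O(\d)$ on $D_{2\d}(\bar S)$ (using $|\b|=O(r^2)$ and $|\rho_\d'|\le 3/\d$), openness of nondegeneracy over the compact $\bar S$ suffices, with no appeal to tameness or convexity of the positive cone; your tameness argument also works, it is just not needed here.

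The genuine gap is in your treatment of the condition $\o^0=\o$ on $\cup_{p\in\S^1}V_p$. You claim this is arranged because ``the interpolation region does not reach the $\S^1$ charts.'' That cannot be: when $S$ accumulates at a point $p\in\S^1$, we have $p\in\bar S$, so every neighborhood $D_\d(\bar S)$ --- on all of which the lemma demands $\o^0=\o'$ --- meets $V_p$ nontrivially. The two required identities thus overlap, and they are compatible only because of a fact your proposal never establishes: near $p$ the model form \emph{already equals} $\o$. Indeed, by Lemma \ref{lem:connection}(2) the connection is flat near $p$, so $\eta=d\h$ and $d\eta=0$, and by Lemma \ref{lem:orbi-almost-kahler}(2) the chart at $p$ is standard K\"ahler with $S$ lifting to $\{w=0\}$; hence $\pi^*\o_S + r\,dr\wedge d\h=\o_0=\o$ on $H_p\times D_\e$. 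This is precisely what allows one to choose the primitive $\b$ (for instance the radial primitive $\b=\int_0^r \a_0\,dr$, where $\o'-\o=\a_0\wedge dr+\a_1$ with $\a_1(\bd_r,\cdot)=0$) to vanish identically on $H_p\times D_\e$, so that $\o^0=\o+d(\rho_\d\b)$ is untouched on $V_p$ while still equal to $\o'$ on all of $D_\d(\bar S)$. Without this observation your construction either fails to produce the model form up to $p$ or fails to leave $\o$ unchanged on $V_p$; with it, your argument closes and coincides with the paper's.
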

\begin{proof}
Let $S\subset  \S^*$ be a singular surface; we define an orbifold $2$-form by
$$
\o'=\pi^* \o_S +  r dr \wedge \eta  + \tfrac{1}{2} r^2 d \eta \in \O^2_{orb}(D_{\e_0}(\bar S))
$$
where $r$ is the function in $D_{\e_0}(\bar S)$ measuring the radius of the fiber $D_{\e_0}$.
A simple computation shows that $\o'$ is smooth for $r=0$, and that $d \o'=0$. In addition, given $p \in \S^1 \cap \overline{S}$, it holds that $\eta=d\theta$ and $\o$ is the standard  K{\"a}hler form on the set $H_p\times D_\e$; therefore $\o'$ coincides with $\o$. 

It is clear that $\o'$ is non-degenerate
at every point of the zero section $\bar S \x \{0\}$, so it is non-degenerate in a maybe smaller neighborhood which we call again $D_{\e_0}(\bar S)$.
Now we interpolate $\o'$ and $\o$ to obtain the sought orbifold symplectic form $\o^0$ on $X$.
Since $\iota^*(\o'-\o)=0$ and $D_{\e_0}(\bar S)$ retracts onto $\bar S$, 
we have that $\o'-\o=d \b$ for some orbifold $1$-form $\b$ defined in $D_{\e_0}(\bar S)$ which is $0$ on $H_p\times D_\e$.
By remark \ref{rem:almost standard} we have $|\o'-\o| = O(r)$ in $D_{\e_0}(\bar S)$.
We can take a primitive $\b$ of $\o'-\o=d \b$ such that $|\b| = O(r^2)$. 
Indeed, we can write $\o'-\o=\a_0 \wedge dr + \a_1$ for $\a_0$ a $1$-form
and $\a_1$ a $2$-form with $\a_1(\bd_r,\cdot)=0$.
Then we set $\b= \int_{0}^r \a_0 dr$, which is smooth and a primitive for $\o'-\o$
such that 
$$
|\b| \le C r |\a_0| = C r |(\o'-\o)(\bd_r,\cdot)| \le C r |\o-\o'||\bd_r|=O(r^2)
$$
since $|\bd_r|$ is bounded.
Now consider a bump function $\rho_\d(r)$ which equals $1$ in $D_\d(\bar S)$ 
and $0$ in $D_{2 \d}(\bar S)$ and such that $|\rho'_\d| \le \tfrac{3}{\d}$.  
Here $\d < \tfrac{\e_0}{2}$ is small, to be fixed later.
Define $\o^0= \o + d(\rho_\d \b)$.
We have that 
$$
|\o^0 - \o|=|\rho'_\d(r) dr \wedge \b + \rho_\d d \b| = O(\tfrac{r^2}{\d}) + O(r)
$$
so $\o^0 - \o=O(\d)$ in $D_{2 \d}(S)$. Hence $\o^0$ is symplectic in $D_{2 \d}(\bar S)$ for $\d$ small. 
Outside $D_{2 \d}(S)$ we have $\o^0=\o$ 
so it is also symplectic, and then $\o^0$ is a global orbifold symplectic form in $X$.
Note that $\o^0$ equals $\o'$ in $D_{\d}(S)$, as desired. Finally, 
it is clear from the construction of $\o^0$ that $\o=\o^0$ on a neighborhood $\cup_p V_p$ of $\S^1$.
\end{proof}

We now modify $\o'$ in order to obtain an intermediate form, $\o_a \in \O^2(X-B(\S^1))$. This is a closed form which is symplectic on $X-(\S^* \cup B(\S^1))$ but $\o_a \neq 0$ on $\partial B(\S^*)$; we contruct later the desired form $\o_a^*$ from $\o_a$. The construction of $\o_a$ follows the ideas of the proof of Lemma \ref{lem:omega^0} and consists on defining a symplectic form which is adapted to a splitting of the tangent bundle of each singular surface $S\subset \S^*$ into two distributions that we now introduce. 

Recall that our atlas provides a well-defined radial function around $S$. The connection $1$-form $\eta$ defined on Lemma \ref{lem:connection} allows us to define the horizontal subbundle $\cH= \ker(rdr\wedge\eta)$ and the vertical subbundle $\cV=\ker(d\pi)$; these can be endowed with an almost K{\"a}hler structure:

\begin{enumerate}
\item On the horizontal space we consider the symplectic form $\pi^*\omega_S$; if $J_S$ tames $\o_S$ on $S$, we can extend it to $\cH$ via the isomorphism $d\pi$ and continue denoting it with the same name. This extension tames $\pi^*\o_S$ because $(d\pi)^t(\o_S)=\pi^*\omega_S$.

\item On the vertical bundle $\cV$ we consider the standard metric $g_{\cV}=dr^2+r^2 d \h^2$ and the complex structure $J_{\cV}$ induced by the complex multiplication by $\ii$ in the atlas $(U_\a,V_\a,\G_\a,\p_\a)$. The induced form is
$\o_{\cV}=rdr\wedge d\theta = rdr \wedge \eta|_{\cV}$.
\end{enumerate}
Note that ${\cH}^* \cong \mathrm{Ann}({\cV})= \mathcal{C}^\infty \otimes \pi^*(\Omega^1(S))$
and that ${\cV}^* \cong \mathrm{Ann}({\cH})=\mathcal{C}^\infty \otimes \la dr, \eta \ra$,
so we can extend any tensor initially constructed in the horizontal (vertical) distribution
as being zero in the vertical (horizontal) distribution respectively. This applies
especially to $J_{\cV}$.

Before stating the result in which we construct the form $\o_a$, we introduce some notations. Consider the neighborhoods $D_\d(\bar S)$ for $0 < \d \le \e_0$; there exists $\d_p^{\S^*}>0$ such that
for any $0 < \d < \d_p^{\S^*}$ it holds 
$D_\d(\bar S) \cap D_\d(\bar S') \subset B(\S^1)$ for any pair of singular surfaces $S, S'$. Fixed a singular surface $S$, define for $0 < \d < \d_p^{\S^*}$ the $\d$-normal neighborhood of $S-B(\S^1)$
$$
N_{\d}(S)= \bigcup_{\a \in \L_S} \p_\a(S_\a \times B_{\d}(0)),
$$ 
where $\L_S$ denotes the set of indexes $\a$ such that
$V_\a \cap S \ne \emptyset$ and $V_\a \subset X- B(\S^1)$. To ease notation, we assume that $\e_0$ is chosen so that 
$\e_0 < \d_p^{\S^*}$. Hence the neighborhoods $N_{\e_0}(S)$
are disjoint for the different surfaces $S$.

\begin{proposition}\label{prop:omega-a}
For every isotropy surface $S$ there exist $0<\d_0^S<\frac{1}{2}\d_p^{\S^*}$, $\d_2^S < \frac{1}{3}\d_0^S$, and $a_2^S>0$ such that for every $a< \min_S \{a_2^S\}$ there is a closed form $\o_a \in \O^2(X-B(\S^1))$ which is non-degenerate on $X- \left(  B(\S^1) \cup \S^* \right)$, such that $\o_a = \o$ on $X-\left( \cup_{p \in \S^1}B_{\e_p}(p) \cup_{S\in \S^*} N_{2\d_0^S}(S) \right)$, and for each singular surface we have:
$$
\o_a= \pi^*\o_S - \frac{1}{4} d J_{\cV} d(r^{2m} + a^2)^{\frac{1}{m}}
$$
on $N_{\d_2^S}(S)$. Here $\eta_S$ is the connection $1$-form constructed on Lemma \ref{lem:connection}.
On $\cup_{p\in \S^1} ( B_{2 \e_p}(p)-B_{\e_p}(p))-\S^*$ the form $\o_a$ is $\frj$-tamed and K{\"a}hler.
\end{proposition}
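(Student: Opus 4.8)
\emph{The plan is to} obtain $\o_a$ by smoothing the fibrewise K\"ahler potential of the form $\o^0$ of Lemma~\ref{lem:omega^0}, whose parameter I take equal to $\d_0^S$, so that $\o^0=\pi^*\o_S-\tfrac14\,dJ_{\cV}d(r^2)$ on $D_{\d_0^S}(\bar S)$ and $\o^0=\o$ outside $D_{2\d_0^S}(\bar S)$. The first computation to record is the identity $-\tfrac14\,dJ_{\cV}d(r^2)=r\,dr\wedge\eta+\tfrac12 r^2\,d\eta$, which fixes the sign convention for $J_{\cV}$ acting on forms and shows that the displayed model form reduces to $\o^0$ when $a=0$. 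I would then replace the vertical potential $r^2$ by its smoothing $\psi_a=(r^{2m}+a^2)^{1/m}$ and interpolate with a radial cutoff $\rho=\rho(r)$, with $\rho\equiv1$ for $r\le\d_2^S$ and $\rho\equiv0$ for $r\ge\d_0^S$, setting
$$
\o_a=\o^0-\tfrac14\,dJ_{\cV}\,d\bigl(\rho\,(\psi_a-r^2)\bigr).
$$
This is a global closed form on $X-B(\S^1)$ (the added exact term is supported in $\{r\le\d_0^S\}$); it equals the model $\pi^*\o_S-\tfrac14 dJ_{\cV}d\psi_a$ on $N_{\d_2^S}(S)$ and equals $\o^0=\o$ outside $N_{2\d_0^S}(S)$, as required.

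Next I would reduce everything to a one-variable convexity estimate. With $\Psi=\rho\psi_a+(1-\rho)r^2$, the radial identity $J_{\cV}d\Psi=-r\Psi'\,\eta$ yields
$$
\o_a=\pi^*\o_S+\tfrac14(r\Psi')'\,dr\wedge\eta+\tfrac14\,r\Psi'\,d\eta .
$$
Taming $\o_a$ by the split structure $J=J_S\oplus J_{\cV}$ then splits into two independent checks, once one verifies that there are no mixed contributions. On a vertical vector the only surviving term is $\tfrac14(r\Psi')'\,(dr\wedge\eta)(v,J_{\cV}v)$, with $(dr\wedge\eta)(v,J_{\cV}v)>0$; for $\Psi=\psi_a$ a direct computation gives $(r\psi_a')'>0$ for every $r>0$ and $a\ge0$ (equivalently $\tfrac{d}{ds}(sF'(s))>0$ with $s=r^2$ and $F(s)=(s^m+a^2)^{1/m}$), whereas $(r\psi_a')'\to0$ as $r\to0$; this is exactly non-degeneracy off $\S^*$ together with degeneracy along $\S^*$. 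On a horizontal vector $\o_a(v,Jv)=\o_S(\pi_*v,J_S\pi_*v)+\tfrac14\,r\Psi'\,(\pi^*d\eta)(v,Jv)$, and since $J_S$ tames $\o_S$ while the curvature term carries the factor $\tfrac14\,r\Psi'=O(r)$, this is positive once $\d_0^S$ is small enough that the $\pi^*\o_S$ term dominates.

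With these positivity statements I would fix the constants in the stated order: first $\d_0^S<\tfrac12\d_p^{\S^*}$, small enough both to dominate the curvature term and to keep the $N_{\e_0}(S)$ disjoint; then $\d_2^S<\tfrac13\d_0^S$ together with the cutoff $\rho$; and finally $a_2^S$. The only region where positivity is not automatic is the support of $\rho'$, where $r\ge\d_2^S$ is bounded away from $0$; there $\psi_a\to r^2$ in $C^\infty$ as $a\to0$, so $\o_a\to\o^0$ in $C^0$ uniformly on the transition annulus (compact because $\bar S$ is), and $\o_a$ stays non-degenerate for $a<a_2^S$. On $\cup_p\bigl(B_{2\e_p}(p)-B_{\e_p}(p)\bigr)-\S^*$, Lemma~\ref{lem:orbi-almost-kahler} and Lemma~\ref{lem:connection}(2) provide $\eta=d\h$ and the standard flat K\"ahler structure, so $\o^0=\o$ is $\frj$-K\"ahler there and the perturbation $-\tfrac14 dJ_{\cV}d(\rho(\psi_a-r^2))$ is a small $\frj$-$(1,1)$ form; hence $\o_a$ remains $\frj$-tamed and, being $(1,1)$ and positive, $\frj$-K\"ahler.

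\emph{The main obstacle} I anticipate is the positivity bookkeeping on the transition annulus coupled with the correct ordering of the three scales $\d_0^S,\d_2^S,a_2^S$: the vertical positivity needs $(r\Psi')'>0$ after the cutoff has been inserted, forcing $a$ to be chosen last and small relative to the already fixed $\rho$ and $\d$'s, while the horizontal positivity competes with the curvature term $\tfrac14 r\Psi'\,d\eta$ and forces $\d_0^S$ small. Checking that a single $\o_a$ meets all four requirements simultaneously---the prescribed model form on $N_{\d_2^S}(S)$, equality with $\o$ off $N_{2\d_0^S}(S)$, non-degeneracy away from $\S^*$, and the $\frj$-K\"ahler condition near $\S^1$---is where the care lies.
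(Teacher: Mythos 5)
Your proposal is correct and follows essentially the same route as the paper's own proof: you start from the form $\o^0$ of Lemma \ref{lem:omega^0}, replace the vertical potential $r^2$ by the smoothing $(r^{2m}+a^2)^{1/m}$, interpolate the two potentials with a radial cutoff, split the taming condition into independent vertical and horizontal checks (your condition $\tfrac{d}{ds}(sF'(s))>0$ is exactly the paper's computation that $f'>0$ and $f''>0$ for $f(x,a)=(x^m+a^2)^{1/m}$), and fix $a$ last via $C^2$-convergence of the cutoff potential on the compact transition annulus. The only phrase to repair is the justification near $\S^1$: the perturbation $-\tfrac14 dJ_{\cV}d\bigl(\rho(\psi_a-r^2)\bigr)$ is \emph{not} small near $\S^*$ (its vertical component is comparable to $r\,dr\wedge\eta$, whose norm is $O(1)$), so the $\frj$-taming on $\cup_p\bigl(B_{2\e_p}(p)-B_{\e_p}(p)\bigr)-\S^*$ should be deduced, as the paper does, by running your vertical/horizontal positivity computation with $J_S\oplus J_{\cV}=\frj$ in the flat chart, rather than by a perturbation argument --- everything needed for this is already contained in your second paragraph.
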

\begin{proof} 
We describe the process in a neighborhood of a fixed singular surface $S$.
Note that $J_{\cV}(dr)=-r \eta$ for $r\neq 0$, so in particular
$$
\frac{1}{2} d(r^2\eta)= -\frac{1}{2} d(rJ_{\cV}dr)=  -\frac{1}{4} dJ_{\cV}dr^2.
$$
Let $\o^0$ be the symplectic form of Lemma \ref{lem:omega^0}
such that 
$$
\o^0 = \pi^*(\o_S) + \frac{1}{2} d(r^2\eta)=  \pi^*(\o_S) - \frac{1}{4} dJ_{\cV}dr^2
$$
on $N_{\d_0}(S)$ and $\o^0=\o$ on $X-N_{2\d_0}(S)$, 
for some $\d_0$ with $0<\d_0< \tfrac{1}{2} \d^{\S^*}_p$.

Define the $2$-form: 
$$
\o^0_{a}  = \pi^*(\o_S) - \frac{1}{4}  dJ_{\cV}d(f(r^2,a)),
$$
where 
$ 
f(r,a)=(r^{m}+ a^2)^{\frac{1}{m}}.
$

Given a function $\bar{f}\colon \RR \to \RR$, the $2$-form 
$-\tfrac{1}{4} dJ_{\cV}d(\bar{f}(r^2))$ is expressed as follows:
\begin{align*}
-\tfrac{1}{4}  dJ_{\cV}d\bar{f}(r^2) =& - \tfrac{1}{2} dJ_{\cV}(r\bar{f}'(r^2)dr)= \tfrac{1}{2} d(\bar{f}'(r^2)r^2\eta)\\ 
=& \tfrac{1}{2}  r^2\bar{f}'(r^2)\pi^* \k + ( r^2 \bar{f}''(r^2) + \bar{f}'(r^2) ) rdr\wedge \eta, 
\end{align*}
where $\pi^*(\k)=d\eta$ is the curvature of the connection. In addition, we observe:
\begin{enumerate}
\item The projection of $-\tfrac{1}{4} dJ_{\cV}d\bar{f}$ to the space $\L^2 {\cV}^*$ is 
$$
-\tfrac{1}{4} dJ_{\cV}d\bar{f}|_{\cV}=( r^2 \bar{f}''(r^2) + \bar{f}'(r^2) ) rdr\wedge \eta.
$$
It is $J_{\cV}$-tamed on an annulus $R_0 \leq r \leq R_1$
as long as $ x \bar{f}''(x) + \bar{f}'(x) >0$ for $x \in [R_0^2,R_1^2]$.

\medskip

\item Denote $\| \cdot \|$ the norm with respect to the metric $g_S + g_{\cV}$. If $r\leq 1$ then,
\begin{align*}
\| \tfrac{1}{4}  dJ_v d\bar{f} \| \leq & \tfrac{1}{2} |\bar{f}'(r^2)| \|\pi^*\k\| + |\bar{f}''(r^2)| + |\bar{f}'(r^2))|. 
\end{align*} 
In particular, if $\Delta= [\bar{\d}_1,\bar{\d}_2]\subset [0,1]$ and $\bar{f}_a$ is a family of functions such that $\bar{f}_a|_{[\bar{\d}_1^2,\bar{\d}_2^2]}$ tends uniformly to $0$ as $a \to 0$ in the $C^2$ norm, then given $\e>0$ one can choose $a_0>0$ small enough such that for $a<a_0$,
$
\| \frac{1}{4}  dJ_v d\bar{f}_a \| < \e
$
on $r\in \Delta$.

\end{enumerate}

We now check that we can choose $\d_1<\tfrac{1}{2} \d_0$ and $a_1>0$ such that for every $a<a_1$ the form $\o^0_{a}$ is non-degenerate on $0<r<\d_1$. The vertical part $ \o^0_a|_{\cV}= -\tfrac{1}{4} dJ_{\cV}df(r^2,a)|_{\cV} $ is non-degenerate and $J_{\cV}$-tamed on $r\neq 0$ because:
\begin{align*}
\left(\frac{d}{dr}f\right)(r,a)=& r^{m-1}(r^{m} + a^2)^{\frac{1}{m}-1} >0 \\
\left(\frac{d}{dr^2}f\right)(r,a)=& a^2(m-1) r^{m-2}(r^{m} +a^2)^{\frac{1}{m}-2} >0 \, .
\end{align*}
The horizontal part is $\o^0_a|_{\cH} = \pi^*(\o_S) + \frac{1}{2} r^2\left( \frac{d}{dr}f \right) (r^2,a)\pi^*\k $,
whose first summand $\pi^*(\o_S)$ is non-degenerate and $J_S$-tamed on ${\cH}$; since $r^2\left( \frac{d}{dr}f \right) (r^2,0)=r^2$ we conclude the existence of $\d_1<\tfrac{1}{2} \d_0$ and $a_1>0$ such that ${\o}^0_a|_{\cH}$ is non-degenerate and $J_S$-tamed on ${\cH}$ for $r<\d_1$ and $a<a_1$.

Choose $\d_2<\frac{1}{3} \d_1$; we now show that there exists $a_2<a_1$ such that for every $a<a_2$ there is a form
$\o_{a}$ on $X$ with $\o_{a}=\o^0_{a}$ if $0\leq r \leq \d_2$, $\o_{a}=\o^0$ if $r>2\d_2$ and
such that $\o_a$ is $J_{\cV} + J_{\cH}$ tamed on $\d_2<r<2\d_2$. 
Let $\rho=\rho(x)$ be a smooth function such that $\rho=1$ if $x\leq 1$ and $\rho=0$ if $x\geq 4$ and define $\rho_\d(x)=\rho(\frac{x}{\d^2})$. We also define $h(x,a)= f(x,a)- x$, $H(x,a)=\rho_{\d_2}(x)h(x,a)$ and the closed form
$$
\o_{a}= \o^0 - \frac{1}{4}dJ_{\cV}d (H(r^2,a)).
$$
We now show that this is $J_{\cV} + J_{\cH}$ tamed on $\d_2<r<2\d_2$.
Note that the function $H$ is smooth on $(x,a)\in (0,\infty)\times \RR$ and verifies that $H(x,0)= \rho_{\d_2}(x) h(x,0)=0$. Thus, the family $\bar{f}_a(x)=H(x,a)$ converges uniformly to $0$ in the $C^2$ norm on the domain $x\in [\d_2^2,4\d_2^2]$.

Let $\e>0$ be such that an $\e$-ball with respect to $g_S + g_{\cV}$
around $\o^0$ is $(J_S + J_{\cV})$-tamed on $\d_2 \leq r \leq 2\d_2$.
Our previous observation ensures the existence of $a_2>0$ such that for every $a<a_2$:
$$
\|  \o_{a}- \o^0\|= \| \tfrac{1}{4} dJ_{\cV}d (H_\d (r^2,a)) \| < \e
$$
on $\d_2 \leq r \leq 2\d_2$, and thus $\o_{a}$ is $J_{\cV} + J_{\cH}$-tamed on $\d_2 \leq r \leq 2\d_2$,
so it is a symplectic form there.

Note also that on the chart $B_{2\e_p}(p) \subset V_p$ the connection is flat, i.e. $\eta=d \h$, and moreover $(\o_S,J_S)$ becomes the standard K{\"a}hler structure on $S \cap V_p$, so that $J_{\cV}+ J_{\cH} = \frj$ becomes standard on $U_p \subset \CC^2$. 
Thus, the computation above proves that $\o_a$ is $\frj$-tamed and K{\"a}hler on the
$\cup_{p\in \S^1}(B_{2\e_p}(p)-B_{\e_p}(p))-\S^*$. 
\end{proof}

\begin{remark} 
For a fixed surface $S$, the formula defining $\o_a$ near $S$ clearly extends to a non-degenerate closed $2$-form on $D_{\e_0}(\bar S) - \S^1$.
However, for different surfaces $S, S'$ these extended $2$-forms may differ in $D_{\e_0}(\bar S_i) \cap D_{\e_0}(\bar S_j) \subset B(\S^1)$. That is why we restrict the definition of $\o_a$ to $X - B(\S^1)$.
\end{remark}

To construct $\o_a^*$ we interpolate $\o_a$ with $0$ near $\S^1$; for that purpose we first prove that $\o_a$ admits a K{\"a}hler potential on a neighbourhood of $\S^1$. This neighbourhood consists of the union of annulus $A_p=B_{2 \e_p}(p) - B_{\e_p}(p) \subset X$ for each $p\in \S^1$; these are covered by the orbifold chart $U_{A_p}=B_{2 \e_p}(0) - B_{\e_p}(0)$. 

\begin{proposition} \label{prop:kahler-potential}
Let $p\in \S^1$; there is a K{\"a}hler potential 
$F_a \colon A_p \to [0,\infty)$ for the lifting of $\o_{a}$ to the chart $U_{A_p}$. 
That is, in $U_{A_p}$ we have
$$
\o_{a}= \frac{\ii}{2} \bd \bar \bd F_a .
$$
In addition, $a$ can be chosen so that there exists $0<t_0<t_1$ such that:
$$
B_{\e_p}(0) \subset F_a^{-1}([0, t_0)) \subset B_{3\e_p/2}(0)  \subset F_a^{-1}([0, t_1)) \subset B_{2\e_p}(0)  \, .
$$
\end{proposition}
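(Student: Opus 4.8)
The plan is to write $F_a$ down by an explicit formula in the flat chart and to read the sublevel inclusions off from the fact that $F_a$ exceeds the squared distance to $p$ by an amount that is uniformly small once $a$ is small. First I would fix coordinates. By Lemma~\ref{lem:orbi-almost-kahler}(2) the chart $(U_p,V_p,\phi_p,\G_p,\o_0,g_0,\frj)$ is standard flat K\"ahler, and $\S^*\cap V_p$ lifts to a finite union of complex lines $L$ through $0$, namely the fixed lines of the complex reflections in $\G_p<\U(2)$. For each such line I choose, by Remark~\ref{rem:rotation}, unitary coordinates $(z_L,w_L)$ adapted to $L=\{w_L=0\}$, so that the radial function attached to $L$ is $r_L=|w_L|$ and the squared distance to $p$ is $|Z|^2:=|z_L|^2+|w_L|^2$, independent of $L$. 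Since $U_{A_p}=B_{2\e_p}(0)-B_{\e_p}(0)$ lies at distance at least $\e_p$ from $0$ and distinct lines meet only at $0$, after shrinking $\d_2$ so that it is small compared with $\e_p$ and with the mutual angles of the lines, the supports $\{r_L\le 2\d_2\}$ meet $A_p$ in pairwise disjoint sets.

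Next I would assemble the potential. By Lemma~\ref{lem:omega^0} we have $\o^0=\o=\o_0=\tfrac{\ii}{2}\bd\bar\bd|Z|^2$ on $U_p$, and by the construction in Proposition~\ref{prop:omega-a} the form $\o_a$ is obtained from $\o^0$ by adding, near each line $L$, the correction $-\tfrac14 dJ_{\cV}dH_L(r_L^2,a)$ with
\[
H_L(x,a)=\rho_{\d_2}(x)\bigl((x^{m_L}+a^2)^{1/m_L}-x\bigr),
\]
where $\ZZ_{m_L}$ is the isotropy along $L$ and $J_{\cV}$ is the associated vertical structure. Because the connection is flat on $U_p$, so that the curvature term $\pi^*\k$ in the computation of Proposition~\ref{prop:omega-a} vanishes, and $H_L$ depends only on $|w_L|$, that same computation gives $-\tfrac14 dJ_{\cV}dH_L(r_L^2,a)=\tfrac{\ii}{2}\bd\bar\bd H_L(|w_L|^2,a)$. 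Using the disjointness of supports I obtain
\[
\o_a=\tfrac{\ii}{2}\bd\bar\bd F_a,\qquad F_a=|Z|^2+\sum_L H_L(r_L^2,a),
\]
which I take as the definition of $F_a$; the formula is smooth on all of $B_{2\e_p}(0)$ because $a>0$, and it is $\G_p$-invariant since $\G_p$ permutes the lines $L$ preserving the $m_L$, so $F_a$ descends to $A_p$.

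For the estimates I would first check that $0\le h_L(x,a):=(x^{m_L}+a^2)^{1/m_L}-x\le a^{2/m_L}$ for every $x\ge 0$: nonnegativity is immediate, and a one-line derivative computation shows $h_L$ is non-increasing in $x$, so its maximum is $h_L(0,a)=a^{2/m_L}$. By disjointness of supports this gives $0\le F_a-|Z|^2\le C(a)$ with $C(a)=\max_L a^{2/m_L}\to 0$ as $a\to 0$; in particular $F_a\ge 0$, so $F_a$ takes values in $[0,\infty)$. Hence $|Z|^2\le F_a\le |Z|^2+C(a)$, which yields $\{F_a<t\}\subset B_{\sqrt t}(0)$ and $B_r(0)\subset\{F_a<r^2+C(a)\}$. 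Finally I would fix $a$ small enough that $C(a)<\tfrac12\e_p^2$ and set $t_0=\tfrac32\e_p^2$, $t_1=3\e_p^2$; the four inclusions then follow from $\sqrt{t_0}<\tfrac32\e_p$, $\sqrt{t_1}<2\e_p$, $\e_p^2+C(a)<t_0$ and $\tfrac94\e_p^2+C(a)<t_1$.

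The step I expect to be the main obstacle is the assembly in the second paragraph: one has to be sure that over $A_p$ the global form $\o_a$ is genuinely the standard form plus the disjointly supported, individually $\bd\bar\bd$-exact corrections. This rests on three facts that must be verified with care---that $\o^0$ is standard throughout $U_p$ (Lemma~\ref{lem:omega^0}), that each component of the isotropy lifts to a complex line so that its vertical correction equals $\tfrac{\ii}{2}\bd\bar\bd$ of a function of $|w_L|^2$ in the ambient structure $\frj$, and that the correction supports are pairwise disjoint inside $A_p$, which is what forces $\d_2$ to be small relative to $\e_p$ and to the angles between the lines.
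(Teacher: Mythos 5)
Your proof is correct and takes essentially the same route as the paper: both write the potential in the flat chart as $F_a=|z|^2+|w|^2+\sum_S H(d_S^2,a)$ with the per-surface corrections disjointly supported, convert each vertical correction into a $\tfrac{\ii}{2}\bd\bar\bd$-term using that $J_{\cV}$ agrees with $\frj$ on vertical forms (so $dJ_{\cV}d=d\frj d=-2\ii\bd\bar\bd$), and deduce the sublevel-set inclusions from the uniform smallness of $H(\cdot,a)$ as $a\to 0$. Your explicit bound $h(x,a)\le a^{2/m}$ via monotonicity is a minor quantitative sharpening of the paper's qualitative uniform-convergence statement, but the argument is the same.
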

\begin{proof}
First of all recall that the preparation of Lemma \ref{lem:omega^0} does not alter $\o|_{V_p}$, being $V=V_p$ a neighborhood of $p$ containing $B_{2\e_p}(p)$. To ease notation
let us suppose from now on that $V=B_{2\e_p}(p)$,
so $V$ is covered by an orbifold chart $U=B_{2 \e_p}(0) \to V$ with coordinates $(z,w)$.
Fix a surface $S \subset \S^*$. We cover $V-B_{\e_p}(p)$ with the charts $W= U_p-(B_{\e_p}(0) \cup_S N_\d(S))$ and $W'=\cup_S W'_S$, with $W'_S= N_{2\d}(S) \cap V$, so $V=W \cup W'$. The K{\"a}hler potential over $W$ is of course:
$$
F_a|_W= |z|^2 + |w|^2.
$$
We now look for the K{\"a}hler potential near a singular surface $S$.
Consider a rotation of $V$ in which $S$ corresponds to $w=0$. By remark \ref{rem:rotation}, on the set $W'_S=N_{2\d}(S) \cap V$ the expression of $\o_a$ is:
$$
\o_{a} = \frac{\ii}{2} \left(dz\wedge d\bar{z} + dw\wedge d\bar{w} \right)  - \frac{1}{4} dJ_Vd H(|w|^2,a)
$$
where $H(x)=\rho_{\d_2}(x) h(x,a)$. In addition, $ dJ_{\cV} d H(|w|^2,a) = d\frj d H(|w|^2,a)$ because $J_{\cV} +J_S = \frj$, and $dH(|w|^2,a)\in \cV^*$. Moreover, taking into account that
$\frj(d\zeta)=\ii d\zeta$ and $\frj(d\bar{\zeta})=-\ii d\bar{\zeta}$ for a complex variable $\zeta$,
we get $\frj \bd = \ii \bd$ and $\frj \bar{\bd}= - \ii \bar{\bd}$. Hence we obtain:
$$
d \frj d  = (\bd + \bar{\bd })\frj (\bd + \bar{\bd })= -2 \ii \bd \bar{\bd} \, .
$$
Thus, $- \frac{1}{4} dJ_Vd H(|w|^2,a) = \frac{\ii}{2} \bd \bar{\bd} H(|w|^2,a)$ and the K{\"a}hler potential is:
$$
F_a|_{W_S'} = |z|^2 + |w|^2 + H(|w|^2,a).
$$ 
Note that if $|w|>2\d_2$ then $H(|w|^2,a)=0$; thus $F_a|_{|w|>2\d_2}= |z|^2 + |w|^2$. 

If we consider another singular surface $S'$ with $p\in \bar{S}'$, we make another rotation in $V$ and repeat the process to construct $F_a$ near $S'$. Since transition functions are rotations, these functions glue together and give a function $F_a$ well-defined in $A$.
Note that, as discussed above, the global expression of $F_a$ in $A$ depends on both the radius $r^2=|z|^2 + |w|^2$
and the distance $d_S$ from a surface $S$. That is, we have in global coordinates $(z,w) \in A$ the expression:
$$
F_a(z,w)= |z|^2 + |w|^2 + \sum_{S}H(d_S(z,w)^2,a)
$$
where each $H(d_S(z,w)^2)$ extends as $0$ outside $N_{2\d_2}(S)$. Finally, the choice of $0<t_0<t_1$ with
$$
B_{\e_p}(0) \subset F^{-1}([0, t_0)) \subset B_{3\e_p/2}(0) \subset F^{-1}([0, t_1)) \subset B_{2\e_p}(0) 
$$
can be made for $a$ small enough. Indeed, the function $|z|^2 + |w|^2$ verifies the above property for $t_0= \frac{5}{4}\e_p$ and $t_1= \frac{7}{4}\e_p$, and $H_a(x)=H(x,a)$ are positive functions that converge uniformly to $0$ as $a \to 0$.
\end{proof}

We now prove a technical result that enables us to perform the desired interpolation.

\begin{lemma} \label{lem:extension}
Let $V \subset \CC^n$ open, and $F \colon V \to \RR$ a smooth function
such that $\tfrac{\ii}{2} \bd \bar \bd F$ is $\frj$-semipositive.
Let $h \colon \RR \to \RR$ smooth with $h'\ge 0$, $h''\ge 0$.
Denote $\o=\tfrac{\ii}{2} \bd \bar \bd F$, $\o_h=\tfrac{\ii}{2} \bd \bar \bd (h \circ F)$.

Then the form $\o_h$ is $\frj$-semipositive.
Moreover, $\o_h$ is $\frj$-positive on the subset of $V$ where
$\o=\tfrac{\ii}{2} \bd \bar \bd F$ is $\frj$-positive and $h'(F)>0$.
\end{lemma}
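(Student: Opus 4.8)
The plan is to reduce the statement to a pointwise fact about the complex Hessian by expanding $\bd\bar\bd(h\circ F)$ via the chain and Leibniz rules. Since $F$ is real-valued, one has $\bar\bd(h\circ F)=(h'\circ F)\,\bar\bd F$, and applying $\bd$ gives
\[
\bd\bar\bd(h\circ F)=(h''\circ F)\,\bd F\wedge\bar\bd F+(h'\circ F)\,\bd\bar\bd F .
\]
Multiplying by $\tfrac{\ii}{2}$ then exhibits $\o_h$ as a weighted sum
\[
\o_h=(h'\circ F)\,\o+(h''\circ F)\,\tfrac{\ii}{2}\,\bd F\wedge\bar\bd F ,
\]
in which $\o$ is scaled by the nonnegative function $h'\circ F$ and a second form is scaled by the nonnegative function $h''\circ F$. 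The whole argument then rests on the $\frj$-positivity of these two summands.

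Next I would isolate the key algebraic observation: for any real smooth $F$ the form $\tfrac{\ii}{2}\,\bd F\wedge\bar\bd F$ is $\frj$-semipositive. The point is that $\beta:=\bd F$ is a $(1,0)$-form whose conjugate is exactly $\overline{\beta}=\bar\bd F$, because $F$ is real; thus the form in question has the shape $\tfrac{\ii}{2}\,\beta\wedge\overline{\beta}$. Writing $\beta=\sum_i a_i\,dz_i$ in local holomorphic coordinates, the Hermitian matrix associated to $\tfrac{\ii}{2}\,\beta\wedge\overline{\beta}$ is $(a_i\overline{a_j})_{i,j}=a\,a^{*}$, a rank-one positive semidefinite matrix, so the form is $\frj$-semipositive everywhere (and degenerate).

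Finally I would assemble the conclusion. Since $\o$ is $\frj$-semipositive by hypothesis and $h'\circ F\ge 0$, the term $(h'\circ F)\,\o$ is $\frj$-semipositive; the term $(h''\circ F)\,\tfrac{\ii}{2}\,\bd F\wedge\bar\bd F$ is $\frj$-semipositive because $h''\circ F\ge 0$ together with the algebraic observation. A nonnegative combination of $\frj$-semipositive $(1,1)$-forms is $\frj$-semipositive, whence $\o_h$ is $\frj$-semipositive. On the subset where $\o$ is $\frj$-positive and $h'(F)>0$, the first term $(h'\circ F)\,\o$ is strictly $\frj$-positive, and adding the semipositive second term preserves strict positivity, giving the refined assertion.

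The computation is entirely routine, and the single step I would flag as the main (mild) obstacle is fixing the positivity dictionary: one must check that $\frj$-semipositivity of a real $(1,1)$-form corresponds to positive semidefiniteness of the associated Hermitian matrix, and that the pointwise fact ``positive-definite $+$ positive-semidefinite $=$ positive-definite'' transfers to the tameness inequality $\o_h(v,\frj v)>0$ for all nonzero $v$. Once that correspondence is in place, every step reduces to elementary linear algebra of Hermitian matrices.
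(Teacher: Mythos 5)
Your proposal is correct and follows essentially the same route as the paper: the identical chain-rule decomposition $\o_h=(h'\circ F)\,\o+(h''\circ F)\,\tfrac{\ii}{2}\,\bd F\wedge\bar\bd F$, followed by the observation that $\tfrac{\ii}{2}\,\bd F\wedge\bar\bd F$ is $\frj$-semipositive (the paper verifies this by computing $\b(u,\bar u)=|\bd F(u)|^2$ for $u=v-\ii\frj v\in T^{1,0}V$, which is exactly your rank-one matrix $aa^{*}$ argument in evaluated form, including the positivity dictionary $\b(v,\frj v)=-\tfrac{\ii}{2}\b(v-\ii\frj v,v+\ii\frj v)$ that you flagged). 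No gaps; the two arguments differ only in phrasing.
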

\begin{proof}
A computation in the complexified tangent bundle $TV \otimes \CC$ gives that 
$$
\tfrac{\ii}{2}\bd \bar \bd (h \circ F)=\tfrac{i}{2} h''(F) \bd F \wedge \bar \bd F + \tfrac{i}{2} h'(F) \bd \bar \bd F.
$$
On the other hand denote
$$
\b=\bd F \wedge \bar \bd F= \sum_{i,j}{(\bd_{z_i} F) (\bd_{\bar z_j}F) dz_i \wedge d \bar z_j} \, .
$$
Recall that $\b(v,\frj v)=-\tfrac{\ii}{2} \b(v-\ii \frj v, v+\ii \frj v)$
for every $v \in TV$, with $v-\ii \frj v \in T^{1,0}V$.
Take a vector $u=v-\ii \frj v=\sum_i a_i \bd_{z_i} \in T^{1,0}V$ and compute:
\begin{align*}
\b(u,\bar u)& = \sum_{i,j}{(\bd F \wedge \bar \bd F)( a_i \bd_{z_i},  \bar a_j \bd_{\bar z_j})}
= \sum_{i,j}{ a_i \bar a_j (\bd_{z_i} F) (\bd_{\bar z_j}F)}  \\
& = \sum_{i,j}{( a_i \bd_{z_i}F ) ( \bar a_j \bd_{\bar z_j}F )} =| \sum_{i}{ a_i \bd_{z_i}F} |^2
=|\bd F(u)|^2 \, .
\end{align*}
Here he have taken into account that $\bd_{\bar{z}_j} F= \overline{\bd_{z_j} F}$ because $F$ is real. 
This shows that $\tfrac{\ii}{2} \b(v, \frj v)=\tfrac{1}{4} \b(u,\bar u)= \tfrac{1}{4} |\bd F(u)|^2$ for $v \in TV$.
Finally, since $\o_h=\tfrac{\ii}{2} h''(F)\b + h'(F) \o$, the result is clear.
\end{proof}

Consider the K{\"a}hler potential for $\o_a$ in the chart $U_A$, given by
$F_a \colon U_A \to [0,\infty)$. As shown in Proposition \ref{prop:kahler-potential}, 
we can take numbers $t_1 >t_0>0$ so that 
$$
B_{\e_p}(0) \subset F_a^{-1}([0, t_0)) \subset B_{3\e_p/2}(0)  \subset F_a^{-1}([0, t_1)) \subset B_{2\e_p}(0)  \, .
$$
Let $h \colon \RR \to \RR$ be a function which vanishes for $t \le  t_0$, such that
$h(t)=t+c$ for $t \ge  t_1$, and with $h', h'' \ge 0$.
For instance one can take a bump function $\varrho$ with $\varrho' \ge 0$ so that $\varrho$ vanishes in $(-\infty,t_0)$ and equals $1$ in $(t_1,+\infty)$, and then define $h(t)=\int_{-\infty}^t \varrho$.

Let us define $\o_a^*=\frac{\ii}{2} \bd \bar \bd (h \circ F_a)$. This gives
a closed $2$-form in $U_A=B_{2\e_p}(0)-B_{\e_p}(0)$ which is $\frj$-semipositive by Lemma \ref{lem:extension} above; moreover it extends to $B_{\e_p}(0)$ as zero. The global formula on $U_A$ for the Kahler potential $F_a$ shows
that $F_a$ is invariant by the isotropy group $\G_p$, therefore $h \circ F_a$ is also $\G_p$-invariant.
On the other hand, as $\G_p$ acts by holomorphic maps, we have that
$\bar \bd \g^*=\g^* \bar \bd$ and $\bd \g^*=\g^* \bd$ as operators acting on forms, for any $\g \in \G_p$.

It follows that $\o^*_a= \bd \bar \bd (h \circ F_a)$ is $\G_p$-invariant in $U_p$. 
Since $\o_a^*$ equals $\o_a$ outside $B_{2 \e_p}(0)$, 
we see that $\o^*_a$ is a global orbifold $2$-form defined in $X$.
We summarize the above discussion in the following:
\begin{corollary} \label{cor:omega^*_a}
There exists a closed orbifold $2$-form $\o_a^*$ in $X$ satisfying:
\begin{itemize}
\item It vanishes in $B_{\e_p}(p)$.
\item It is $\frj$-positive in $B_{2 \e_p}(p)-(B_{\e_p}(p)\cup \S^*)$. In fact, $\o_a^*=\bd \bar \bd (h \circ F_a)$ there.
\item It coincides with $\o_a$ outside $B_{2 \e_p}(p)$.
\end{itemize}
\end{corollary}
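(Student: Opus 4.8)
The plan is to take $\o_a^*$ to be exactly the form $\tfrac{\ii}{2}\bd\bar\bd(h\circ F_a)$ produced by the construction preceding the statement, where $F_a$ is the K\"ahler potential of Proposition \ref{prop:kahler-potential} on the annulus chart $U_A=B_{2\e_p}(0)-B_{\e_p}(0)$ and $h$ is the chosen function with $h',h''\ge 0$, vanishing on $(-\infty,t_0]$ and equal to $t+c$ on $[t_1,\infty)$. I would then verify the three listed properties together with global well-definedness; closedness is automatic since $\o_a^*$ has the form $\bd\bar\bd(\cdot)$.

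First I would pin down the behaviour at the two radii. Because $h\equiv 0$ on $(-\infty,t_0]$ and $B_{\e_p}(0)\subset F_a^{-1}([0,t_0))$, the function $h\circ F_a$ is (locally) constant on $B_{\e_p}(p)$, so $\o_a^*=0$ there and extends smoothly by zero into the inner ball. At the other end, $F_a^{-1}([0,t_1))\subset B_{2\e_p}(0)$ forces $F_a\ge t_1$ on the outer shell of the annulus, where $h(t)=t+c$; hence $\o_a^*=\tfrac{\ii}{2}\bd\bar\bd(F_a+c)=\tfrac{\ii}{2}\bd\bar\bd F_a=\o_a$ on that shell. This lets $\o_a^*$ glue with $\o_a$ (already a global form on $X-B(\S^1)$) outside $B_{2\e_p}(p)$, giving the third bullet.

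The positivity is the heart of the matter and is where I would invoke Lemma \ref{lem:extension}. On $(B_{2\e_p}(p)-B_{\e_p}(p))-\S^*$ the form $\o_a=\tfrac{\ii}{2}\bd\bar\bd F_a$ is $\frj$-positive (K\"ahler) by Proposition \ref{prop:omega-a}, with $F_a$ a genuine potential by Proposition \ref{prop:kahler-potential}. Since $h',h''\ge 0$, Lemma \ref{lem:extension} yields that $\o_a^*$ is $\frj$-semipositive throughout and strictly $\frj$-positive exactly where $h'(F_a)>0$, i.e.\ where $F_a>t_0$ (there $h'=\varrho>0$). Thus on the part of $B_{2\e_p}(p)-(B_{\e_p}(p)\cup\S^*)$ on which $\o_a^*$ is nonzero it is strictly $\frj$-positive. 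I expect the main care point to be precisely this dichotomy: one must record that, away from $\S^*$, any degeneracy of $\o_a^*$ occurs only on $\{F_a\le t_0\}$, where $h\circ F_a$ is flat, so no spurious degeneracy is introduced and the statement of the second bullet is clean.

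Finally I would check global well-definedness. The global expression for $F_a$ in Proposition \ref{prop:kahler-potential} shows $F_a$ is $\G_p$-invariant, hence so is $h\circ F_a$; and since $\G_p<\U(2)$ acts by biholomorphisms we have $\g^*\bd=\bd\,\g^*$ and $\g^*\bar\bd=\bar\bd\,\g^*$, so $\o_a^*$ is $\G_p$-invariant and descends to an orbifold form on $V_p$. Combining this with the outer matching $\o_a^*=\o_a$ established above, the local pieces over the distinct points $p\in\S^1$ patch with $\o_a$ into a single closed orbifold $2$-form on all of $X$, which completes the verification.
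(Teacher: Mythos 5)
Your proposal is correct and follows essentially the same route as the paper, whose proof is precisely the discussion preceding the corollary: define $\o_a^*=\tfrac{\ii}{2}\bd\bar\bd(h\circ F_a)$ on the annulus chart, use the sublevel-set inclusions from Proposition \ref{prop:kahler-potential} to get vanishing on the inner region and agreement with $\o_a$ near the outer boundary, invoke Lemma \ref{lem:extension} for $\frj$-(semi)positivity, and use $\G_p$-invariance of $F_a$ together with holomorphicity of the $\G_p$-action to descend to a global orbifold form. Your extra observation that strict positivity holds only where $F_a>t_0$, the form being identically zero on the rest of the annulus away from $\S^*$, is a sharper reading than the corollary's literal second bullet and matches how the paper actually uses the result later (Lemma \ref{lem:gluing_forms} only requires semipositivity in the intermediate region).
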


\subsection*{Desingularisation}

As explained before, we now define a smooth atlas $\widehat \cA$ on $X-\S^1$ that makes the map $\Id \colon (X-\S^1,\cA) \to (X-\S^1,\widehat \cA)$ differentiable;  we also prove that $\widehat \o_a^*= \rId_*(\o_a^*)$ is the desired $2$-form. In order to make the presentation clearer, we first check on Proposition \ref{prop:manifold structure} that $\widehat \o_a = Id_*(\o_a)$ endows $(X-\cup_{p\in \S^1} B_{\e_p}(p), \widehat \cA)$ with the structure of a symplectic manifold. 

\begin{proposition} \label{prop:manifold structure} Notations and hipothesis as above.
 The following holds:
\begin{enumerate}
\item There is a manifold atlas $\widehat \cA= \{(\widehat U_\a,\widehat V_\a, \widehat \p_\a, \widehat \G_\a) \}$ on $X-B(\S^1)$
(i.e. an orbifold atlas with isotropy $\widehat \G_\a=\{1\}$) such that the identity
$$ 
\Id \colon (X-\S^1 ,\cA) \to (X-\S^1 ,\widehat \cA),
$$
is a smooth orbifold map, and it is a diffeomorphism out of $\S^*$.

\item The push-forward $\widehat \o_a=(\Id)_*(\o_a)$ is smooth on 
$(X- B(\S^1),\widehat \cA)$, and is a symplectic form for $a<a_2$.
In addition, on $(\cup_{p\in \S^1} (B_{2\e_p}(p)-B_{\e_p}(p)),\widehat \cA)$ we have that $\widehat \o_a$ is tamed by $\frj$.
\end{enumerate}
\end{proposition}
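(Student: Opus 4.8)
The plan is to manufacture $\widehat\cA$ from the bundle atlas $\cA$ of Lemma~\ref{lem:atlas bundle} by desingularizing each isotropy surface fibrewise through the branched cover $w\mapsto w^m$, and then to read off the pushed-forward form in the new coordinates, the whole point being that the constant $a>0$ turns the singular potential $r^2=|\widehat w|^{2/m}$ into a smooth one. For a chart $(U_\a,V_\a,\G,\p_\a)\in\cA$ adapted to a surface $S\subset\S^*$ with $V_\a\cap\S^1=\emptyset$, Lemma~\ref{lem:atlas bundle} gives $U_\a=S_\a\x D_{\e_0}$ with $\G\cong\ZZ_m$ acting by $w\mapsto e^{2\pi\ii/m}w$ on the disc. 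I would set $q_\a\colon U_\a\to\widehat U_\a$, $(z,w)\mapsto(z,w^m)$; this map is $\G$-invariant, so it descends to a homeomorphism $\overline{q}_\a\colon U_\a/\G\to\widehat U_\a$, and $\widehat\p_\a=\p_\a\circ\overline{q}_\a^{-1}$ is a manifold chart with trivial isotropy, while charts disjoint from $\S^*$ (already with $\G=\{1\}$) are kept unchanged. The change of charts must be checked smooth: since the orbifold transitions read $(z,w)\mapsto(\psi^1_{\a\b}(z),A_{\a\b}(z)w)$ with $A_{\a\b}\colon S_\a\to\U(1)$ smooth, in the coordinate $\widehat w=w^m$ they become $(z,\widehat w)\mapsto(\psi^1_{\a\b}(z),A_{\a\b}(z)^m\,\widehat w)$, and $z\mapsto A_{\a\b}(z)^m$ is a smooth $\U(1)$-valued function; hence these are smooth diffeomorphisms and $\widehat\cA$ is a manifold atlas. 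In each chart the identity is represented by $q_\a$, which is smooth, so $\rId\colon(X-\S^1,\cA)\to(X-\S^1,\widehat\cA)$ is a smooth orbifold map; its fibrewise differential is multiplication by $m w^{m-1}$, invertible for $w\neq0$, so it restricts to a diffeomorphism on $X-\S^*$.

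For the smoothness of $\widehat\o_a$, note that off $\S^*$ the identity is a diffeomorphism, so there $\widehat\o_a=(\rId^{-1})^*\o_a$ is smooth and, by Proposition~\ref{prop:omega-a}, closed and non-degenerate; the only issue is the extension across the desingularized surface $\widehat S=\{\widehat w=0\}$. On $N_{\d_2^S}(S)$, Proposition~\ref{prop:omega-a} together with the identity $-\tfrac14 dJ_{\cV}dF=\tfrac12 d(G\,\eta)$ from its proof (with $F=(r^{2m}+a^2)^{1/m}$) gives
$$
\o_a=\pi^*\o_S+\tfrac12\,d\big(G\,\eta\big),\qquad G=r^{2m}\big(r^{2m}+a^2\big)^{1/m-1}.
$$
The crucial observation is $r^{2m}=|w^m|^2=|\widehat w|^2$, so that in the manifold coordinate $G=|\widehat w|^2(|\widehat w|^2+a^2)^{1/m-1}$ is a \emph{smooth} function of $\widehat w$, because $a>0$ keeps $|\widehat w|^2+a^2$ bounded away from $0$. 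Writing $\eta=\tfrac1m d\widehat\h+\pi^*\n$ with $\widehat\h=\arg\widehat w$ (Lemma~\ref{lem:connection}), the angular term enters $G\,\eta$ only through $|\widehat w|^2 d\widehat\h=x\,dy-y\,dx$ (here $\widehat w=x+\ii y$), a smooth $1$-form vanishing at the origin; since $\pi^*\n$ and $\pi^*\k=d\eta$ are pulled back from the base, $G\,\eta$ is a smooth $1$-form on $\widehat U_\a$ and $\widehat\o_a=\pi^*\o_S+\tfrac12 d(G\,\eta)$ extends smoothly across $\widehat S$.

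For non-degeneracy, at a point of $\widehat S$ the curvature term $\tfrac12 G\,\pi^*\k$ and the mixed term in $\tfrac12 dG\wedge\eta$ both carry a factor $|\widehat w|^2$ and vanish, so the form becomes block diagonal, equal to $\pi^*\o_S$ on the horizontal block and to $g'(0)\,\tfrac{\ii}{2}d\widehat w\wedge d\bar{\widehat w}$ on the vertical one, where $g(s)=(s+a^2)^{1/m}$ and $g'(0)=\tfrac1m a^{2/m-2}>0$; since $\pi^*\o_S$ is non-degenerate on $\widehat S$, so is $\widehat\o_a$. Combined with the diffeomorphism case off $\S^*$, this shows $\widehat\o_a$ is symplectic on all of $(X-B(\S^1),\widehat\cA)$ for $a<a_2$ (closedness follows by continuity from $d\widehat\o_a=0$ off $\widehat S$). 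Finally, on the annuli $B_{2\e_p}(p)-B_{\e_p}(p)\subset V_p$ the connection is flat ($\eta=d\h$) and $(\o_S,J_S)$ is standard, so $J_{\cV}+J_S=\frj$ and $\widehat\o_a=\tfrac{\ii}{2}dz\wedge d\bar z+\tfrac{\ii}{2}\bd\bar\bd F$ with $F=(|\widehat w|^2+a^2)^{1/m}$ depending only on $\widehat w$; by Lemma~\ref{lem:extension} the second summand is $\frj$-semipositive with strictly positive vertical part, so $\widehat\o_a$ is $\frj$-tamed throughout the annulus, including along $\S^*$.

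The main obstacle is precisely the extension of $\widehat\o_a$ across $\widehat S$ treated in the second and third paragraphs: a generic $\G$-invariant symplectic form pushed through $w\mapsto w^m$ would involve $r^2=|\widehat w|^{2/m}$, which is not smooth at $\widehat w=0$. The entire role of the constant $a>0$ introduced in Proposition~\ref{prop:omega-a} is to replace this by the potential $(|\widehat w|^2+a^2)^{1/m}$, which is smooth in $\widehat w$ and still yields a strictly positive vertical block on the zero section; verifying these two properties is the heart of the argument, whereas the atlas construction of part~(1) is routine once the fibrewise model $w\mapsto w^m$ is fixed.
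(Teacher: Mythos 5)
Your proof follows essentially the same route as the paper's: the atlas is built from the bundle charts of Lemma~\ref{lem:atlas bundle} via the fibrewise map $w\mapsto w^m$, and the key mechanism is identical, namely that the $a$-smoothed potential $(r^{2m}+a^2)^{1/m}$ becomes a smooth function of the manifold coordinate $\widehat w=w^m$. The paper phrases this by writing $\widehat\o_a=\widehat\pi^*\o_S-\tfrac14\, dJ_{\cV}'d\bigl((r')^2+a^2\bigr)^{1/m}$ with $r'=|\widehat w|$ and the rescaled connection $\eta'=\tfrac1m \Id_*\eta$, while you expand $\o_a=\pi^*\o_S+\tfrac12 d(G\eta)$ with $G=r^{2m}(r^{2m}+a^2)^{1/m-1}=|\widehat w|^2(|\widehat w|^2+a^2)^{1/m-1}$; these are the same computation, and your block-diagonal evaluation along $\widehat S$ (horizontal block $\pi^*\o_S$, vertical block a positive multiple of $\tfrac{\ii}{2}d\widehat w\wedge d\bar{\widehat w}$) matches the paper's.

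Two blemishes, neither fatal. First, your appeal to Lemma~\ref{lem:extension} for the $\frj$-semipositivity of $\tfrac{\ii}{2}\bd\bar\bd(|\widehat w|^2+a^2)^{1/m}$ is inapplicable: that lemma requires $h''\ge 0$, and the outer function $t\mapsto t^{1/m}$ you would need is concave. The claim itself is true, by the direct computation you already perform: with $g(s)=(s+a^2)^{1/m}$, $s=|\widehat w|^2$, the vertical coefficient is $g'(s)+s\,g''(s)=\tfrac1m(s+a^2)^{1/m-2}\bigl(\tfrac{s}{m}+a^2\bigr)>0$, and there are no cross terms since the potential depends only on $\widehat w$; alternatively, as the paper does, tameness off $S$ transfers from Proposition~\ref{prop:omega-a} through the local biholomorphism $\Id$, and along $S$ it follows from your block computation. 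Relatedly, your single formula $\widehat\o_a=\tfrac{\ii}{2}dz\wedge d\bar z+\tfrac{\ii}{2}\bd\bar\bd(|\widehat w|^2+a^2)^{1/m}$ is valid only on the part of the annulus where the bump $\rho_{\d_2}$ equals $1$; in the interpolation region and outside $N_{2\d_2}(S)$ the potential is the interpolated $\widehat F_a$ of Proposition~\ref{prop:kahler-potential}, but there $\widehat\o_a$ coincides with $\o_a$ in unmodified charts and Proposition~\ref{prop:omega-a} already gives $\frj$-tameness, so the conclusion stands. Second, in part~(1) you only check transitions between two modified charts; one must also check the mixed transitions with unmodified charts meeting $D_{\e_0}(\bar S)$ (the paper's second case: such $V_\b$ satisfy $V_\b\cap\S^*=\emptyset$, and the induced transition $(\psi^1_{\a\b},(\psi^2_{\a\b})^m)$ is a local diffeomorphism because $\psi^2_{\a\b}\neq 0$). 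This is routine, as you say, but it is part of verifying that $\widehat\cA$ is an atlas.
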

\begin{proof}
We shall modify some orbifold charts of $\cA$ to obtain $\widehat \cA$. 
First, if $x \notin \S^*$ we consider an orbifold chart $(U_x,V_x,\p_x,\{1\})\in \cA$ around $x$ with $V_x \cap \S^*=\emptyset$ and we take this as a chart of $x$ in $\widehat \cA$. 
Now, given a singular surface $S$ with isotropy isomorphic to $\ZZ_m$, we consider the cover of $D_{\e_0}(\bar S)$ as in Lemma \ref{lem:atlas bundle}. Take $(U_\a,V_\a,\G_\a,\p_\a)$ in this cover with $U_\a=S_\a \x D_{\e_0}$ and $p \notin V_\a$. We define $\widehat U_\a = S_\a \times  D_{(\e_0)^{m}}$, $\widehat V_\a=V_\a$, and $\widehat \p_\a(z',w')=\p_\a(z',w'^{\frac{1}{m}})$. 
Despite of the fact that $w'^{\frac{1}{m}}$ is not well-defined on $\CC$, the composition $\p_\a \circ (z',w'^{\frac{1}{m}})$ is because $\phi_\a$ is a $\G_\a$-invariant map.
The manifold coordinates $(z',w')$ of $\widehat \cA$ and the orbifold coordinates $(z,w)$ of $\cA$ are related by $w'=w^m, z'=z$.
We now check that the change of charts of $\widehat \cA$ are smooth. 
Denote 
$$
\psi_{\a \b}=(\psi_{\a \b}^1,\psi_{\a \b}^2) \colon U_\a \to U_\b
$$ 
the change of charts of the atlas $\cA$.
Let $V_\a \subset D_{\e_0}(\bar S)$ a chart from $\cA$ not containing any $p\in \S^1$, and take $V_\b$ another chart in $\cA$. Two cases arise:
\begin{enumerate}
\item If $V_\b \subset D_{\e_0}(\bar S)-\S^1$ we have
induced transition functions given by
$$
\widehat \psi_{\a\b} \colon \widehat U_\a \to \widehat U_\b \, , \quad
(z',w') \mapsto (\q_{\a\b}(z'),A_{\a\b}(z')^{m}w') =\psi_{\a \b}(z',w'^{\frac{1}{m}})
$$  
because
$$
\widehat \p_{\b} (\widehat \psi_{\a \b}(z',w'))= \widehat \p_\b( \psi_{\a \b}(z',w'^{\frac{1}{m}}))=\p_\a(z',w'^{\frac{1}{m}})=\widehat \p_\a(z',w').
$$ 
The map $\widehat \psi_{\a\b}$ is a diffeomorphism because $A_{\a \b}(z) \in \U(1)$.

\medskip

\item If $p \notin V_\b \not \subset D_{\e_0}(\bar S)$ and $V_\b \cap D_{\e_0}(\bar S) \ne \emptyset$, then by construction $V_\b \cap \S^*=\emptyset$. The induced change of chart in the atlas $\widehat \cA$ is 
$$
\widehat \psi_{\a \b}(z',w')=(\psi_{\a \b}^1(z',w'),\psi_{\a \b}^{2}(z',w')^m) \, ;
$$
this is a local diffeomorphism since $V_\b \cap \S^*=\emptyset$ and therefore $\psi^2_{\a\b}(z,w)\neq 0$.
\end{enumerate}

The map $\Id$ restricted to $D_{\e_0}(\bar S)$ is covered by the local maps: 
$$
\Id_\a \colon U_\a \to \widehat U_\a, \quad (z,w)\to (z,w^m)=(z',w'),
$$
which are diffeomorphisms outside $w=0$. Note that the radial function $r'=|w'|$ is again well-defined on $(D_{\e^m_0}(\bar S),\widehat \cA)$ and $\rId^*(r')=r^m=|w|^m$.

We now consider the symplectic form around a singular surface $S$; we follow the notation of Proposition \ref{prop:omega-a}.
First observe that if $\eta_\a= \pi^*(\n_\a) + d\h$, then  $(\rId)_*(\eta_\a)=\widehat \pi^*(\n_\a) + m \, d\h$, where $\widehat \pi \colon D_{\e^m_0}(\bar S) \to \bar S$ is the projection.
If we define $\eta'= \frac{1}{m}\Id_*(\eta)$, then $\eta'$ is a connection form in $D_{\e^m_0}(\bar S)$. Again, one can define smooth distributions $H'=\ker(r' dr' \wedge\eta')$ and 
$V'=\ker d \widehat \pi = \Id_*(\ker d \widehat \pi)$, and almost K{\"a}hler structures as before: 
$(\widehat \pi^*\o_S, J_S')$, $(r' dr' \wedge \eta', J_V'=\ii)$. 
Taking into account that $\Id^*(r')=r^m$ and the fact that $(\Id_*)J_V = J_V'$ (since $\Id$ is holomorphic), we obtain:
$$
(\rId)^*( \pi^*\o_S -\frac{1}{4} dJ_V'd(r'\,^2 + a^2)^{\frac{1}{m}} )= \o_a, \quad  r' \leq \d_2^{m}.
$$
Therefore $\widehat \o_a=\Id_*(\o_a)$ extends smoothly to $(X-B_{\e_p}(p),\widehat \cA)$, it is closed, 
and it is non-degenerate out of $S$. Moreover, near $S$ it has the form:
$$
\widehat \o_a=\pi^*\o_S -\frac{1}{4} dJ_V'd(r'\,^2 + a^2)^{\frac{1}{m}} \, .
$$
In every point of $S=\{r'=0\}$, taking into account the formula obtained for $dJ_Vd\bar{f}(r^2)$ in Proposition \ref{prop:omega-a}, the form $\widehat \o_a$ coincides with:
$$
\widehat \pi^*\o_S  +   \frac{1}{m a^{1-\frac{1}{m}}} r'dr'\wedge \eta', 
$$
which is $J_V' + J_H'$-tamed.

Take the set $V^*_S=(B_{2\e_p}(p)-B_{\e_p}(p)) \cap N_{\d_2}(S)$, covered by the chart  $U^*_S=  (B_{2 \e_p}(0) -  B_{ \e_p}(0)) \cap (\CC \x D_{\d_2}) \in \cA$; this has isotropy $\tilde{\G}=\{ \g \in \G_p \mbox{ s.t. }\g(z,0)=(z',0)\}$. Consider the induced chart $\widehat U^*_S \in \widehat \cA$ 
which has coordinates $(z',w')$ given by
$$
\Id_{U^*_S} \colon U^*_S \to \widehat U^*_S
\quad , \quad (z,w) \mapsto (z,w^m)=(z',w') \, .
$$
Its isotropy is $\tilde{\G}/\ZZ_m$, that acts without fixed points on $U^*_S$.
We claim that $\widehat \o_a$ is tamed by the standard complex structure $\frj$ on $\widehat U^*_S$.
Moreover, we can push-forward an almost complex structure on $(X-\S^1,\cA)$ to $(X-\S^1,\widehat\cA)$,
and near $\S^1$ this push-forward gives the standard almost complex structure.
Note that $\o_a$ is $\frj$-tamed in $U^*_S$ and, outside $S$, $\widehat \o_a$ 
coincides with $\o_a$ via the local biholomorphism $\Id_{U^*_S}$. 
Indeed, we saw that the form $\o_a$ was tamed on $U^*_S$ by $J_H + J_V= \frj$; outside $S$ we have
$$
J_{V}'+ J_{H}' = (\Id_{U^*_S})_*(J_V + J_H)= (\Id_{U^*_S})_*(\frj)=\frj
$$
the last equality since $\Id_{U^*_S}$ is holomorphic. Hence $\widehat \o_a$ is $\frj$-tamed in $\widehat U^*_S - S$, 
and also in $S \cap \widehat U^*_S$ because it is $J_V' + J_H'$-tamed on $S$ and that $J_V' + J_H'=\frj$ near $p$.
\end{proof}

To finish this section we extend the form $\widehat{\o}_a$ by zero as we did with $\o_a$ in Corollary \ref{cor:omega^*_a}:

\begin{corollary} \label{cor:widehat-omega_a^*}
There exists a closed orbifold $2$-form $\widehat{\o}_a^*$ in $(X-\S^1,\widehat{\cA})$ satisfying: 
\begin{itemize}
\item It vanishes in $B(\S^1)$.
\item It is $\frj$-positive in $\cup_{p\in \S^1} B_{2 \e_p}(p)-B_{\e_p}(p)$.
\item It coincides with $\widehat{\o}_a$ outside $\cup_{p\in \S^1}\bar{B}_{2 \e_p}(p)$. In particular it is symplectic there.
\end{itemize}
\end{corollary}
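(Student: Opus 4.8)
The plan is to mirror the construction of Corollary~\ref{cor:omega^*_a}, but now inside the manifold atlas $\widehat{\cA}$: I will transport the K\"ahler potential $F_a$ of Proposition~\ref{prop:kahler-potential} through the branched chart map of Proposition~\ref{prop:manifold structure}, produce a \emph{smooth} potential $\widehat F_a$ for $\widehat\o_a$ near each $p\in\S^1$, and then apply the very same cut-off function $h$ together with Lemma~\ref{lem:extension}.

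First I would fix $p\in\S^1$ and recall from Proposition~\ref{prop:manifold structure} that near $p$ the chart map $\Id\colon(z,w)\mapsto(z,w^m)=(z',w')$ is holomorphic, restricts to a biholomorphism off $\S^*$, and satisfies $\widehat\o_a=\Id_*(\o_a)$, with $\widehat\o_a$ smooth and $\frj$-tamed (hence $\frj$-positive) on the whole chart, \emph{including} across the resolved divisor $S=\{w'=0\}$. I then define $\widehat F_a$ on the pushed-forward annulus chart by the requirement $\widehat F_a\circ\Id=F_a$. Since $\Id$ is holomorphic it commutes with $\bd\bar\bd$, so from $\Id^*\widehat\o_a=\o_a=\tfrac{\ii}{2}\bd\bar\bd F_a$ one obtains $\widehat\o_a=\tfrac{\ii}{2}\bd\bar\bd\widehat F_a$; that is, $\widehat F_a$ is a genuine K\"ahler potential for $\widehat\o_a$.

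Next I would set $\widehat\o_a^*=\tfrac{\ii}{2}\bd\bar\bd(h\circ\widehat F_a)$ with the same $h$ as in Corollary~\ref{cor:omega^*_a} ($h$ vanishing for $t\le t_0$, $h(t)=t+c$ for $t\ge t_1$, and $h',h''\ge0$). Lemma~\ref{lem:extension} then makes $\widehat\o_a^*$ $\frj$-semipositive, and $\frj$-positive wherever $\widehat\o_a$ is $\frj$-positive and $h'(\widehat F_a)>0$. Because $(h\circ\widehat F_a)\circ\Id=h\circ F_a$, one has $\widehat\o_a^*=\Id_*(\o_a^*)$, so the three asserted properties follow by transcribing Corollary~\ref{cor:omega^*_a} through $\Id$ together with the sublevel-set nesting of Proposition~\ref{prop:kahler-potential} transported to $\widehat F_a$: $\widehat\o_a^*$ vanishes where $\widehat F_a<t_0$ (a set containing $B(\S^1)$, on which it extends by zero across $S$), equals $\widehat\o_a$ where $\widehat F_a>t_1$ (in particular outside $\bar B_{2\e_p}(p)$), and is $\frj$-positive on $\cup_p B_{2\e_p}(p)-B_{\e_p}(p)$ by Lemma~\ref{lem:extension} on the locus $h'(\widehat F_a)>0$; note that here, unlike in Corollary~\ref{cor:omega^*_a}, no neighborhood of $S$ need be removed, precisely because $\widehat\o_a$ is already $\frj$-positive across $S$. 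Finally, as in Corollary~\ref{cor:omega^*_a}, $\widehat F_a$ is invariant under the residual isotropy $\G'=\G_p/\G^*$ of Lemma~\ref{lemma:trivial}, which acts holomorphically; hence $h\circ\widehat F_a$ is $\G'$-invariant and $\widehat\o_a^*$ is a bona fide orbifold form, which agrees with $\widehat\o_a$ outside the balls and so glues to a global closed $2$-form on $(X-\S^1,\widehat{\cA})$.

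The step I expect to be the main obstacle is the smoothness of $\widehat F_a$ across $S=\{w'=0\}$. The naive transport $\widehat F_a\circ\Id=F_a=|z|^2+|w|^2+H(|w|^2,a)$ looks singular at $w'=0$, since $|w|^2=|w'|^{2/m}$ is not smooth there. The resolution is that near $S$ the cut-off $\rho_{\d_2}$ equals $1$, so the linear term $|w|^2$ cancels against the $-|w|^2$ contained in $H(|w|^2,a)=f(|w|^2,a)-|w|^2$, leaving the regularized expression $|z|^2+(|w|^{2m}+a^2)^{1/m}=|z'|^2+(|w'|^2+a^2)^{1/m}$, which \emph{is} smooth exactly because $a>0$. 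Verifying this cancellation, and hence that $\widehat F_a$ and $\widehat\o_a^*$ extend smoothly and positively across $S$, is the only nontrivial point; everything else is a routine transcription of Corollary~\ref{cor:omega^*_a}.
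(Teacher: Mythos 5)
Your proposal is correct and follows essentially the same route as the paper's proof: both transport the K\"ahler potential $F_a$ through the holomorphic chart map $(z,w)\mapsto(z,w^m)$, identify $\Id_*(\o_a^*)$ with $\tfrac{\ii}{2}\bd\bar\bd(h\circ\widehat F_a)$ off $S$, and hinge on exactly the cancellation you single out — near $S$ the cut-off equals $1$, so the non-smooth terms cancel leaving $|z'|^2+(|w'|^2+a^2)^{1/m}$, which is smooth since $a>0$. The only cosmetic difference is that the paper defines $\widehat\o_a^*:=\Id_*(\o_a^*)$ and then verifies the extension, whereas you define it by the potential formula and verify it agrees with the push-forward; these are the same argument.
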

\begin{proof} 
Consider the orbifold symplectic form $\o_a^*$ on $(X,\cA)$ of Corollary \ref{cor:omega^*_a}.
We need to check that the form ${\Id}_*(\o_a^*|_{X-\S^*})$ extends 
to a closed two-form $\widehat{\o}_a^*$ on $(X-B(\S^1),\widehat \cA)$, 
and this form will have the required properties. 
As $\o_a^*=\o_a$ outside $B_{2 \e_p}(p)$ and $\Id_*(\o_a)=\widehat \o_a$, 
we only need to check that the push-forward of $\o_a^*$ extends on $B_{2\e_p}(p)$.

Let us consider an isotropy surface $S \subset \S^*$ and denote by $\widehat \p \colon \widehat U^*_S \to V^*_S$ 
the manifold chart in $\widehat \cA$ that we constructed in the proof of Proposition \ref{prop:manifold structure} 
in order to desingularize $V^*_S=(V-B_{\e_p}(p)) \cap N_{\d}(S)$.  The restriction of the identity map 
$$
\Id \colon (V^*_S, \cA) \to (V^*_S, \widehat \cA)
$$ 
is holomorphic, and its inverse is holomorphic on $V^*_S- S$.
This leads to the following equality on $V_S^*-S$:
$$
{\Id}_*(\o_a^*|_{X- \S^*})= \frac{\ii}{2} \bd \bar \bd (h \circ \widehat{F_a}), 
$$
where $h \colon \RR \to \RR$ is the smooth function constructed in Corollary \ref{cor:omega^*_a} and 
$$
\widehat F_a(z,w)= |z|^2 + |w|^{\frac{2}{m}} + 
\rho_{\d_2}(|w|^{\frac{2}{m}})\left( (|w|^{2} + a)^{\frac{1}{m}}- |w|^{\frac{2}{m}} \right).
$$
The function $\widehat F_a$ has a smooth extension defined on $V^*_S$ because near $w=0$ 
the expression of $\widehat F_a$ is $\widehat F_a(z,w)= |z|^2 + (|w|^{\frac{2}{m}} + a)^{\frac{1}{m}}$. 
Thus, we can extend $\Id_*(\o_a^*)$ over $S$. 
\end{proof}

\subsection{Symplectic orbifold structure with only isolated singularities.}

Now first extend our manifold atlas $\widehat \cA$ of $X-\S^1$ to an orbifold atlas of $X$ 
with only isolated singularities, and then we extend the symplectic form; ending up with
$(X,\widehat \cA,\widehat \o)$ a symplectic orbifold
with only isolated singularities. 

\subsection*{Extension of the orbifold structure.}

Let $p \in \S^1$ and let $(U,V,\G,\frj,\o_0)$ be a K{\"a}hler orbifold chart of $(X,\cA)$ around 
$p$. We have
$\G^* \lhd \G < \U(2)$, with $\G^*$ the isotropy group 
of the surfaces $S\subset \S^*$ accumulating at $p$ and $\G'=\G/\G^*$ the quotient, which acts in $U/\G^*$.
The manifold $(V-\{p\},\widehat \cA)$ has a complex structure induced
from the orbifold chart $(U-\{0\},\frj) \in \cA$, as was shown in Proposition \ref{prop:manifold structure}.
On the other hand, $V \cong U/\G$ has the structure of a complex orbifold induced by $\cA$. 
The identity map $\Id \colon (V-\{p\},\cA) \to (V-\{p\},\widehat \cA)$ 
is holomorphic and a biholomorphism out of $\S^*$. 
In both cases, the complex structure is the restriction to $U-\{0\}$ 
of the standard complex structure $\frj$ on $\CC^2$.

We also have a covering map $(U-\{0\})/\G^* \to (U-\{0\})/\G$ because $\G'$ acts freely on $(U-\{0\})/\G^*$. This allows us to consider the complex manifold $(U-\{0\}/\G^*,\widehat{\cA})$ and the complex orbifold $(U-\{0\}/\G^*,\cA)$; the complex structure is again in both cases induced from $\CC^2$, and the identity map
$(U-\{0\}/\G^*,\cA) \to (U-\{0\}/\G^*,\widehat{\cA})$ is holomorphic and
biholomorphic outside $\S^*$. 
The next proposition shows that the orbifold $(U-\{0\}/\G^*,\cA)$ can be naturally 
seen as an open set of $\CC^2$, 
allowing us to extend the complex structure $(U-\{0\}/\G^*,\widehat \cA)$ at the point $0$.

\begin{proposition}
The complex manifold structure on $((U-\{0\})/\G^*,\widehat \cA)$
can be naturally extended to a complex manifold structure on $U/\G^*$ so that
the group $\G'=\G/\G^*$ acts by biholomorphisms 
in the complex manifold $(U/\G^*,\widehat \cA)$.

In addition, there is an open set $\widehat U \subset \CC^2$ containing $0$,
a group $\G''$ acting on $\widehat U$ by biholomorphisms,
and a biholomorphic map $G \colon (\widehat U,\frj) \to (U/\G^*,\widehat \cA)$
such that $G$ is $(\G'',\G')$-equivariant.
\end{proposition}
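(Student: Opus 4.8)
The plan is to build the chart explicitly from the invariant-theoretic model of Lemma~\ref{lemma:trivial} and then read off both assertions from the compatibility of that model with $\widehat{\cA}$. Since $\G^*$ is generated by complex reflections, the proof of Lemma~\ref{lemma:trivial} provides $\CC[z_1,z_2]^{\G^*}=\CC[f,g]$ with $f,g$ homogeneous of positive degree, and the induced homeomorphism $\bar H\colon U/\G^*\to \widehat U$, where $H=(f,g)\colon U\to\CC^2$ and $\widehat U:=\bar H(U/\G^*)$ is open in $\CC^2$ (as $\bar H$ is a global homeomorphism $\CC^2/\G^*\to\CC^2$) and contains $\bar H(0)=0$. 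The candidate complex structure on $U/\G^*$ is the one transported from the standard structure of $\widehat U$ by declaring $\bar H$ biholomorphic, and the chart of the second assertion is $G=\bar H^{-1}$.

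First I would verify that $\bar H$ is holomorphic, hence biholomorphic, on $(U-\{0\})/\G^*$ with respect to $\widehat{\cA}$. The key geometric remark is that two distinct complex lines through the origin in $\CC^2$ meet only at $0$; consequently every point of $U-\{0\}$ lies either in the free locus of $\G^*$ or on exactly one reflection line $L$. Over the free locus the quotient map is a covering, so $\bar H$ is a local biholomorphism; near a generic point of $L$ the local model is a cyclic group $\ZZ_m$ acting by $(z,w)\mapsto(z,\zeta w)$, whose invariants are precisely the holomorphic functions of $(z,w^m)$, and $(z,w^m)$ are exactly the coordinates used to construct $\widehat{\cA}$ in Proposition~\ref{prop:manifold structure}. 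In either case $\bar H$ is holomorphic for $\widehat{\cA}$; being injective it is biholomorphic onto $\widehat U-\{0\}$ by the standard fact that an injective holomorphic map between equidimensional complex manifolds is biholomorphic onto its image. Hence the transported structure agrees with $\widehat{\cA}$ on $(U-\{0\})/\G^*$ and extends it across $p$, giving the first assertion.

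Next I would produce the actions and the equivariance. For $\g\in\G$, normality of $\G^*$ shows that $\g$ preserves $\CC[z_1,z_2]^{\G^*}=\CC[f,g]$, so $\g\cdot f,\g\cdot g$ are polynomials in $f,g$ and the induced self-map of $\widehat U$ in the coordinates $(f,g)$ is polynomial, with polynomial inverse induced by $\g^{-1}$; thus each $\g$ induces a biholomorphism of $\widehat U$. Letting $\G''$ be the image of $\G$ in the biholomorphism group of $\widehat U$, a locally-constant argument on the connected free locus (where $\g x=g_x\, x$ with $g_x\in\G^*$ uniquely determined and varying continuously) shows that the kernel of $\G\to\G''$ is exactly $\G^*$, so $\G''\cong\G/\G^*=\G'$. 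By construction $\bar H$ intertwines the action of $\G'$ on $U/\G^*$ with that of $\G''$ on $\widehat U$, so $G=\bar H^{-1}$ is $(\G'',\G')$-equivariant; and since $\bar H$ is biholomorphic for the extended structure, $\G'$ acts by biholomorphisms on $(U/\G^*,\widehat{\cA})$. This yields the remaining assertions.

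The main obstacle is the compatibility check of the second paragraph: one must confirm that the coordinates supplied by the invariant polynomials $f,g$ coincide, away from $p$, with the $w\mapsto w^m$ desingularization coordinates defining $\widehat{\cA}$. This is exactly where the separation of the reflection lines away from the origin is essential, as it reduces the verification to the two local models (free action and a single cyclic rotation) rather than to the full group $\G^*$, for which the matching of invariants with $\widehat{\cA}$-coordinates would otherwise be unclear.
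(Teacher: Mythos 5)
Your proposal is correct, and its overall skeleton coincides with the paper's: both proofs start from the invariant-polynomial map $H=(f,g)$ of Lemma \ref{lemma:trivial}, transport the standard complex structure of $\widehat U=H(U)$ through $\bar H$, and realize $\G''$ as the group that $\G$ induces on $\widehat U$. The execution of the two key steps is genuinely different, however. For the extension across $\S^*$, the paper does not argue through local models: it observes that $G|^{-1}$ is holomorphic off $\S^*$ and bounded, that $\S^*\cap U$ is a union of complex hypersurfaces, and invokes the Riemann extension theorem, followed by the inverse function theorem to get biholomorphicity; you instead check holomorphy of $\bar H$ at points of $\S^*$ directly, via the cyclic normal form $(z,w)\mapsto(z,\zeta w)$ and the matching of $\ZZ_m$-invariants with the $(z,w^m)$ coordinates of Proposition \ref{prop:manifold structure}, and then quote the standard fact that an injective holomorphic map between equidimensional complex manifolds is an open embedding (essentially the same fact underlying the paper's inverse-function-theorem step). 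For the group action, the paper again uses Riemann extension --- the $\G'$-action is holomorphic off the complex subvariety $\S^*\cup\{0\}$, hence everywhere, and $\G''$ is defined by conjugation with $G$ --- whereas you argue algebraically: normality of $\G^*$ makes $\G$ act on $\CC[f,g]$, so each $\g$ descends to a polynomial automorphism of $\widehat U$, and your kernel computation on the connected free locus identifies the image with $\G'$. The paper's route is shorter and needs no case analysis; yours is more constructive, exhibiting $\G''$ as polynomial maps and making explicit the compatibility between the invariant-theoretic coordinates and the $w\mapsto w^m$ desingularization charts, which is indeed the crux and which the paper handles implicitly through the extension theorem. Both arguments are complete.
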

\begin{proof}
As explained in the proof of Lemma \ref{lemma:trivial} there is an homeomorphism,
\begin{equation} \label{eq:H}
H\colon \CC^2 \to \CC^2 \quad , \quad H(z)=(f(z_1,z_2),g(z_1,z_2)).
\end{equation}
where $\{f,g\}$ is a basis of the algebra $\CC[z_1,z_2]^{\G^*}$ of $\G^*$-invariant polynomials.
This map induces an homeomorphism $\bar{H} \colon \CC^2/\G^* \to \CC^2$ which is holomorphic as an orbifold map and a biholomorphism out of the singular locus $\S^*$; here we have considered $\CC^2/ \G^*$ as a complex orbifold, covered by a unique chart $(\CC^2,\G^*)$. The structure that $(U-\{0\})/\G^*$ inherits when viewed as an open subset of $\CC^2/\G^*$ is precisely the orbifold structure determined by $\cA$. Let us call $G'=\bar{H}^{-1}$, define $\widehat U=H(U)\subset \CC^2$, so $U/\G^* \cong \widehat U$ via $\bar H$.
Let $G=\rId \circ G' \colon \widehat U \to U/\G^*$ and consider the restriction
$$
G| \colon (\widehat U-\{0\} , \frj)
\xrightarrow{G'} (U-\{0\}/\G^*, \cA) \
\xrightarrow{\Id} ((U-\{0\})/\G^*, \widehat \cA),
$$
which is bijective and biholomorphic out of $G|^{-1}(\S^*)$, 
and can be extended as a homeomorphism from $\widehat{U}$ to $U/\G^*$. 
The inverse $G|^{-1}$ is holomorphic out of $\S^*$,
being $\S^* \cap U$ a union of complex hyperplanes. 
Also, $G|^{-1}$ is a homeomorphism onto the set $\widehat U - \{0\} \subset \CC^2$ which is bounded.
By Riemann extension theorem, $G|^{-1}$ is holomorphic.
The inverse function theorem ensures that $G|$ is a biholomorphism.
This shows that the complex manifold structure in 
$((U-\{0\})/\G^*, \widehat \cA)$
can be extended naturally to all $U/\G^*$, in such a way that $G \colon (\widehat U , \frj) \to (U/\G^*, \widehat \cA)$ is a global complex chart, hence a biholomorphism.

We consider $\G '' = \{ \g''=G^{-1} \circ [\g] \circ G \colon [\g] \in \G'=\G/\G^* \}$.
Note that since $\g\in U(2)$ then clearly the action of $\G'$ in $(U/\G^*, \widehat \cA)$ 
is holomorphic outside the isotropy, i.e. on $((U-\S)/\G^*,\widehat \cA)$ with $\S=\S^* \cup \{0\}$ a complex subvariety. Again by Riemann extension theorem, the action of $\G'$ must be holomorphic in all $(U/\G^*,\widehat \cA)$. 
Since $G$ is a biholomorphism, every $\g'' \in \G''$ is a biholomorphism of $(\widehat U,\frj)$. Hence $\G''$ acts in $\widehat U$ by biholomorphisms. 

We call $q \colon U/\G^* \to (U/\G^*)/\G' \cong U/\G$ the quotient map.
Consider $Y=(U/\G^*,\widehat \cA)$ a complex manifold and $\G' \cong \G''$ equivalent groups acting on $Y$ by biholomorphisms, so the space $Y/\G'$ is a complex orbifold.
In addition, $(\widehat U,Y/\G', \p_0, \G'')$ gives a global orbifold chart of $Y/\G'$, 
with $\p_0= q \circ G \colon \widehat U \to Y/\G'$ the orbifold chart that induces $\bar{\p_0} \colon \widehat  U/\G'' \to Y/\G'$ a homeomorphism.

Finally, using the homeomorphism $h \colon Y/\G' \to V$ given by 
$Y/\G' = (U/\G^*)/\G' \cong U/\G \cong V$ 
we have $(\widehat U,V, \widehat\p, \G'')$ with $\widehat\p=h \circ \p_0$ ; this gives an orbifold chart around the point $p \in X$ which is compatible with
the manifold structure $(X - \S^1, \widehat \cA)$.
\end{proof}

\begin{corollary} \label{cor:new-orb-str}
The map $G$ induces an orbifold chart $(\widehat U,V,\widehat \p, \G'')$ of $V=V^p$ which is compatible
with the manifold structure $(X-\S^1,\widehat \cA)$.
\end{corollary}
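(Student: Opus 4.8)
Since the preceding Proposition already produces the biholomorphism $G \colon (\widehat U, \frj) \to (U/\G^*, \widehat\cA)$, the group $\G''$ acting on $\widehat U$, and the map $\widehat\p = h \circ q \circ G \colon \widehat U \to V$, the substance of the Corollary is just the extraction and explicit checking of the compatibility clause, and the plan is to make this last point precise. First I would record that $\widehat\p$ induces a homeomorphism $\widehat U/\G'' \cong V$ and that, because $0$ is the only point of $\widehat U$ lying over $p \in \S^1$, the group $\G''$ acts freely on $\widehat U - \{0\}$; hence over $V - \{p\}$ the new chart already carries a genuine manifold structure, and the only thing left to verify is that it agrees with the one $\widehat\cA$ assigns to $X - \S^1$.

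This verification I would carry out by comparing changes of charts. Fix any manifold chart $(\widehat U_\a, \widehat V_\a, \widehat\p_\a, \{1\}) \in \widehat\cA$ with $\widehat V_\a \subset X - \S^1$ and $\widehat V_\a \cap V \ne \emptyset$. By Definition \ref{def:change of charts}, and since the $\widehat\cA$-chart has trivial isotropy, compatibility at a point $x \in \widehat V_\a \cap (V - \{p\})$ reduces to showing that a local lift of $\widehat\p_\a^{-1} \circ \widehat\p$ is a diffeomorphism. Here I would invoke the factorization established in the Proposition,
$$
G| \colon (\widehat U - \{0\}, \frj) \xrightarrow{G'} ((U - \{0\})/\G^*, \cA) \xrightarrow{\Id} ((U - \{0\})/\G^*, \widehat\cA),
$$
in which $G'$ is a biholomorphism and the identity is holomorphic (biholomorphic off $\S^*$); composing with the covering quotient by $\G'$ and the identification $h$ shows that away from $\S^*$ the transition $\widehat\p_\a^{-1} \circ \widehat\p$ is holomorphic, hence a local biholomorphism there.

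The main obstacle is regularity along the resolved surfaces $S \subset \S^* \cap V$, where in orbifold coordinates the passage to $\widehat\cA$ is governed by the substitution $w' = w^m$ of Proposition \ref{prop:manifold structure} and so is not visibly a diffeomorphism. The point to push through is that $G$ is a biholomorphism intertwining $\frj$ on $\widehat U$ with the complex structure $\widehat\cA$ on $U/\G^*$, so that the desingularization of $\S^*$ seen by $\widehat\cA$ and the one carried by the new chart are identified holomorphically; the Riemann extension theorem, already used in the Proposition to extend $G|^{-1}$ across $\S^*$, then guarantees that the transition extends holomorphically, hence smoothly, over the resolved surfaces with no loss of regularity. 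Once this is in place the change of charts is a diffeomorphism at every $x \in V - \{p\}$, the monomorphism $\{1\} \hookrightarrow \G''$ and the equivariance conditions of Definition \ref{def:orbifold} being trivially satisfied there, and therefore $(\widehat U, V, \widehat\p, \G'')$ is compatible with $\widehat\cA$, as claimed.
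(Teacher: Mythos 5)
Your proposal is correct and takes essentially the same route as the paper: in fact the paper gives no separate proof of this Corollary, because the compatibility assertion is precisely the closing statement of the preceding Proposition's proof (the chart $\widehat\p = h \circ q \circ G$ is compatible since $G$ is a biholomorphism onto $(U/\G^*,\widehat\cA)$ and $q$ restricts to a covering, hence a local biholomorphism, away from the origin). Your verification via transition maps, the free action of $\G''$ on $\widehat U - \{0\}$, and the Riemann extension argument across $\S^*$ simply unpacks the same ingredients already established there.
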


\subsection*{Symplectic form on $(X,\widehat \cA)$}

Adding to $\widehat \cA$ the charts defined on Corollary \ref{cor:new-orb-str}
we obtain an orbifold atlas $\cA'$ on $X$ with isolated singularities.
We also have a symplectic form $\widehat \o_a^*$ on $(X-B(\S^1),\widehat \cA)$ 
given by Proposition \ref{prop:manifold structure}. 
The last step now is extending the symplectic form $\widehat \o_a^*$ to all the orbifold $(X, \cA')$.
For this it will be useful the following Lemma.

\begin{lemma} \label{lem:gluing_forms}
Denote $B_r=B_r(0)$ a ball of radius $r$, and let $U \subset \CC^n$ open set containing $B_r$.
Let $\o_1, \o_2 \in \O^2(U)$ be closed $2$-forms so that:
\begin{itemize}
\item The form $\o_1$ vanishes on $\bar B_{\e_1}$, it is $\frj$-semipositive
in $B_{\e_2} - \bar B_{\e_1}$, and it is $\frj$-positive in $U - B_{\e_2}$, for some $\e_2 < \e_1 < r$.
\item The form $\o_2$ is non-degenerate in $U$ and $\frj$-tame.
\end{itemize}
Then, for any choice of $\e_3$ with $r> \e_3 > \e_2$ there is a $\frj$-tame symplectic form $\o$ in $U$ 
so that $\o|_{B_{\e_1}}=\d \o_2$ for some $\d>0$ small, $\o=\o_1$ outside $B_{\e_3}$.
\end{lemma}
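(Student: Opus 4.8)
The plan is to follow the same inflation/gluing scheme used above for isolated singularities (the construction of $\o_\l$), treating $\o_1$ as the ``large'' ambient form that is already positive on the outer region and $\d\o_2$ as a ``small'' form to be installed near the centre. The essential feature that makes this work is that $\o_1$ \emph{vanishes} on $\bar B_{\e_1}$: this is exactly what lets the glued form equal $\d\o_2$ on $B_{\e_1}$ on the nose, while positivity of $\o_1$ on the outer shell absorbs the gluing error.

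First I would produce a primitive. Since $B_r \subset U$ is a ball and both $\o_1,\o_2$ are closed, the Poincar\'e lemma gives a $1$-form $\eta \in \O^1(B_r)$ with $d\eta = \o_2 - \o_1$ on $B_r \supset B_{\e_3}$. Next choose a radial cutoff $\rho \colon U \to [0,1]$ with $\rho \equiv 1$ on $\bar B_{\e_2}$ and $\rho \equiv 0$ outside $B_{\e_3}$ (so $\rho\,\eta$ extends smoothly by $0$ to all of $U$), and set, for a parameter $\d>0$ to be fixed small,
\[
\o_\d = \o_1 + \d\, d(\rho\,\eta) \in \O^2(U).
\]
This form is closed for every $\d$, it equals $\o_1$ outside $B_{\e_3}$, and on $\bar B_{\e_2}$, where $\rho\equiv 1$, it equals $\o_1 + \d\,d\eta = (1-\d)\o_1 + \d\o_2$; in particular on $B_{\e_1}$, where $\o_1$ vanishes, it is exactly $\d\o_2$, as required.

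It remains to verify $\frj$-positivity of $\o_\d$ on all of $U$, which I would do region by region. On $B_{\e_1}$ the form is $\d\o_2$, which is $\frj$-positive since $\o_2$ is $\frj$-tame. On the shell $B_{\e_2}-\bar B_{\e_1}$ it is the convex combination $(1-\d)\o_1 + \d\o_2$ of a $\frj$-semipositive and a $\frj$-positive form (with $0<\d\le 1$), hence $\frj$-positive. Outside $B_{\e_3}$ it equals $\o_1$, positive there. The only delicate region is the transition annulus $B_{\e_3}-\bar B_{\e_2}$, which lies inside $U-B_{\e_2}$ where $\o_1$ is already $\frj$-positive; there
\[
\o_\d = \o_1 + \d\,\rho\,(\o_2-\o_1) + \d\, d\rho\wedge\eta = \o_1 + \d\bigl(\rho(\o_2-\o_1)+ d\rho\wedge\eta\bigr).
\]

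The hard part---and indeed the only genuine estimate---is controlling this correction term on the annulus. Since $\rho,\eta,\o_1,\o_2$ are fixed independently of $\d$ and the relevant closed annulus (shrunk slightly so as to stay inside the open set where $\o_1>0$) is compact, the quantity $\rho(\o_2-\o_1)+ d\rho\wedge\eta$ is bounded there, so the whole correction is $O(\d)$ uniformly. Because $\o_1$ is $\frj$-positive on this compact set and $\frj$-positivity is an open condition, there is $\d_0>0$ such that $\o_\d$ stays $\frj$-positive on the annulus for all $\d<\d_0$. Choosing $\d<\min\{1,\d_0\}$ then makes $\o:=\o_\d$ a globally $\frj$-tame---hence nondegenerate and closed---form on $U$ taking the two prescribed values. (For the nested balls to be meaningfully distinct the intended ordering is $\e_1<\e_2<\e_3<r$; the inequality ``$\e_2<\e_1$'' as written appears to be a typo.)
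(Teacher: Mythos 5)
Your proof is correct and follows essentially the same route as the paper's: a cutoff function times a primitive, the ansatz $\o_\d=\o_1+\d\,d(\rho\cdot\mathrm{primitive})$, region-by-region verification of $\frj$-positivity, and a compactness estimate on the transition annulus $\e_2\le r\le\e_3$ absorbing the $O(\d)$ error against the uniform positivity of $\o_1$; the only (immaterial) difference is that you take a primitive of $\o_2-\o_1$ where the paper takes $d\b=\o_2$, giving $(1-\d)\o_1+\d\o_2$ instead of $\o_1+\d\o_2$ on the inner region. Your observation that the stated inequality $\e_2<\e_1$ is a typo for $\e_1<\e_2$ is also right, as the paper's own proof uses the ordering $\e_1<\e_2<\e_3$.
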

\begin{proof}
Let $\rho=\rho_\e(r)$ be a radial bump function which equals $1$ in $0 \le r \le \e_2$
and equals $0$ in $r \ge \e_3$. Let $\b \in \O^1(B_r)$ such that $d \b=\o_2$.
Let us define $\o=\o_\d= \o_1 + \d d (\rho \b)$.
We have that $\o= \d \o_2$ on $r\le \e_1$, so it is symplectic and $\frj$-tame there.
On $\e_1 \le r \le \e_2$ we have $\o= \o_1 + \d \o_2$; as $\o_1$ is $\frj$-semipositive
and and $\o_2$ is $\frj$-positive in $B_{\e_2} - B_{\e_1}$, we see that $\o$ 
is $\frj$-positive in $B_{\e_2} - B_{\e_1}$. Also, $\o=\o_1$ on $r \ge {\e_3}$.

Finally, on $ \e_2 \le r \le \e_3$ we have $\o=\o_1+ \d d\rho \wedge \b + \d \rho \, \o_2$.
Since $\o_1$ is $\frj$-positive on the compact annulus $ \e_2 \le r \le \e_3$,
there exists a constact $C>0$ with $\o_1(u,\frj u) \ge C |u|^2$ for all $u \in \RR^{2n}$
and all point in the annulus.
Hence 
\begin{align*}
|\o(u,\frj u) |
&  =|\o_1(u,\frj u)+ \d d\rho \wedge \b(u,\frj u) + \d \rho \, \o_2(u,\frj u)| \\
& \ge \o_1(u,\frj u) + \d \rho \, \o_2(u,\frj u) - |\d d\rho \wedge \b(u,\frj u)| \\
& \ge C |u|^2 - \d \|d\rho \wedge \b \| |u|^2 \\
& =(C- \d \|d\rho \wedge \b \|)|u|^2 \, 
\end{align*}
so if $\d < \frac{C}{\|d\rho \wedge \b \| + 1}$ then $\o$ is $\frj$-tame and symplectic.
\end{proof}

Now recall the closed $2$-form $\widehat \o_a^*$
of Corollary \ref{cor:widehat-omega_a^*}. The form $\widehat \o_a^*$ is defined on $(X,\cA')$,
vanishes in $B(\S^1)$, coincides with $\widehat \o_a$ outside $B_{2 \e_p}(p)$,
and it is $\frj$-positive in $\cup_{p\in \S^1} (B_{2 \e_p}(p)-B_{\e_p}(p))$.
By Lemma \ref{lem:gluing_forms} we can glue $\widehat \o_a^*$ with the standard symplectic form $\o_0$ near $p$ 
to construct an orbifold symplectic form on $(X,\cA')$ extending $\widehat \o_a$.

\begin{corollary}
There exists an orbifold symplectic form $\bar \o_a$ on $(X,\cA')$ 
which coincides with $\widehat \o_a$ out of some neighborhood of $\S^1$.
\end{corollary}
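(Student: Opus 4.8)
The plan is to apply the gluing Lemma~\ref{lem:gluing_forms} in a neighbourhood of each point $p \in \S^1$ and then to patch the resulting local forms with the globally defined symplectic form $\widehat \o_a$. Concretely, fix $p \in \S^1$ and work in the orbifold chart $(\widehat U, V, \widehat \p, \G'')$ of Corollary~\ref{cor:new-orb-str}, in which the complex structure is the standard $\frj$ on $\widehat U \subset \CC^2$. By Corollary~\ref{cor:widehat-omega_a^*}, the lift of $\widehat \o_a^*$ to this chart vanishes on $\widehat\p^{-1}(B_{\e_p}(p))$, is $\frj$-positive on $\widehat\p^{-1}(B_{2\e_p}(p)-B_{\e_p}(p))$, and agrees with the symplectic form $\widehat \o_a$ outside $B_{2\e_p}(p)$. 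I would feed $\o_1 = \widehat \o_a^*$ and $\o_2 = \o_0$ (the standard form) into Lemma~\ref{lem:gluing_forms} to produce a $\frj$-tame symplectic form that equals $\d\,\o_0$ near $p$ and equals $\widehat \o_a^*$ near the outer boundary of the chart.

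To match the hypotheses of Lemma~\ref{lem:gluing_forms}, which are phrased in terms of concentric Euclidean balls, I would sandwich the vanishing locus of $\widehat \o_a^*$ between two balls: choose radii $\e_1 < \e_2 < \e_3 < R$ so that $B_{\e_1} \subset \widehat\p^{-1}(B_{\e_p}(p))$, the set $\widehat\p^{-1}(B_{\e_p}(p)) \cap B_R$ is contained in $B_{\e_2}$, and $\bar B_R \subset \widehat\p^{-1}(B_{2\e_p}(p))$. Then the lift of $\widehat \o_a^*$ vanishes on $B_{\e_1}$, is $\frj$-semipositive on $B_{\e_2} - \bar B_{\e_1}$ (being either zero or positive there), and is $\frj$-positive on $B_R - B_{\e_2}$, which maps into the annulus $B_{2\e_p}(p)-B_{\e_p}(p)$ where $\widehat\o_a^*$ is positive; the required $\frj$-tameness on the outer part is exactly the statement recorded in Proposition~\ref{prop:manifold structure}. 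These are the hypotheses of the Lemma with $U = B_R$, so it yields a closed $\frj$-tame symplectic $2$-form $\o^{(p)}$ on $B_R$ equal to $\d\,\o_0$ on $B_{\e_1}$ and to $\widehat\o_a^*$ on $B_R - B_{\e_3}$.

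The \emph{hard part} is to ensure that this local construction is $\G''$-invariant, so that the output is a genuine orbifold form rather than merely a smooth form on the chart. Since $\G''$ acts on $\widehat U$ by biholomorphisms fixing $0$, I would first linearise the action by Bochner averaging and conjugate it into $\U(2)$; this is done by a holomorphic change of coordinates, which preserves $\frj$ and the $\frj$-positivity of $\widehat \o_a^*$, and one is free to re-choose the Euclidean balls afterwards. In these coordinates $\o_0$, its standard $\U(2)$-invariant primitive $\b_0 = \tfrac{\ii}{4}\sum_j (z_j\, d\bar z_j - \bar z_j\, dz_j)$, and the radial bump function used in the proof of Lemma~\ref{lem:gluing_forms} are all $\G''$-invariant, so the form $\o^{(p)} = \widehat \o_a^* + \d\, d(\rho\,\b_0)$ produced there is automatically $\G''$-invariant. (Alternatively, one averages $\o_0$, a primitive, and a radial bump over $\G''$ without changing coordinates; averaging preserves $\frj$-tameness because each $\g'' \in \G''$ is holomorphic.) Finally, since the balls $B_{2\e_p}(p)$ about distinct points of $\S^1$ are disjoint and each $\o^{(p)}$ agrees with $\widehat \o_a^*=\widehat\o_a$ on $B_R - B_{\e_3}$, the local forms glue with the globally defined $\widehat \o_a$ to a single orbifold symplectic form $\bar \o_a$ on $(X,\cA')$ which coincides with $\widehat \o_a$ off the neighbourhood $\cup_{p\in\S^1} B_{2\e_p}(p)$ of $\S^1$.
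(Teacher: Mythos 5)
Your overall strategy coincides with the paper's: work in the chart $(\widehat U,V,\widehat\p,\G'')$ of Corollary~\ref{cor:new-orb-str}, apply Lemma~\ref{lem:gluing_forms} with $\o_1=\widehat\o_a^*$ and $\o_2=\o_0$, and then enforce $\G''$-invariance. Your invariance discussion is correct (the paper simply averages the output over $\G''$, which is exactly your parenthetical alternative; your linearization route also works, since biholomorphisms preserve $\frj$-positivity and Euclidean balls can be re-chosen after the coordinate change), and your verification of the hypotheses of Lemma~\ref{lem:gluing_forms} inside $B_R$ is fine.

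There is, however, a genuine error in your last step, caused by the ``inside-out'' ball arrangement $\bar B_R\subset\widehat\p^{-1}(B_{2\e_p}(p))$. You claim that $\o^{(p)}$ agrees with ``$\widehat\o_a^*=\widehat\o_a$'' on $B_R-B_{\e_3}$ and then glue with the global form $\widehat\o_a$ across $\partial B_R$. But Corollary~\ref{cor:widehat-omega_a^*} only gives $\widehat\o_a^*=\widehat\o_a$ \emph{outside} $\cup_p\bar B_{2\e_p}(p)$; inside the annulus the two forms differ wherever $h'(\widehat F_a)\neq 1$, i.e.\ on $\widehat F_a^{-1}([0,t_1))$, a region that contains (the preimage of) $B_{3\e_p/2}(p)$ and can therefore meet $B_R-B_{\e_3}$ and cross $\partial B_R$. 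As written, your global form would be $\widehat\o_a^*$ just inside $\partial B_R$ and $\widehat\o_a$ just outside, hence discontinuous. The repair is one line: glue with the globally defined closed form $\widehat\o_a^*$ rather than with $\widehat\o_a$, i.e.\ set $\bar\o_a=\o^{(p)}$ on $B_R$ and $\bar\o_a=\widehat\o_a^*$ elsewhere. This is consistent (since $\o^{(p)}=\widehat\o_a^*$ on $B_R-B_{\e_3}$), it is symplectic (by Corollary~\ref{cor:widehat-omega_a^*}, $\widehat\o_a^*$ is $\frj$-positive on the annuli and equals the symplectic form $\widehat\o_a$ outside $\cup_p\bar B_{2\e_p}(p)$), and it coincides with $\widehat\o_a$ off the neighbourhood $\cup_p\bar B_{2\e_p}(p)$ of $\S^1$, which is all the Corollary asks. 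Alternatively you could order the balls as the paper does, $\widehat\p^{-1}(B_{2\e_p}(p))\subset B_{\e_2}\subset B_{\e_3}\subset\widehat U$, so that $\o_1=\widehat\o_a$ genuinely holds outside $B_{\e_2}$; note, though, that your smaller gluing region has the minor virtue of invoking $\frj$-positivity only where Proposition~\ref{prop:manifold structure} and Corollary~\ref{cor:widehat-omega_a^*} actually establish it, namely inside $B_{2\e_p}(p)$.
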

\begin{proof}
Consider the orbifold chart $(\widehat U,V,\widehat \p, \G'')$ around a point $p\in \S^1$ of Corollary \ref{cor:new-orb-str}.
Consider a local representative of $\widehat \o_a^*$ in the chart $\widehat U$, denote it $\o_1=\widehat \o_a^*$.
Consider also $\o_2=-\frac{\ii}{2}( dz \wedge d \bar z + dw \wedge d \bar w)$ 
the standard symplectic form in $\widehat U \subset \CC^2$. Take balls $B_{\e_i}$ in $\widehat U$
so that 
$$
B_{\e_1} \subset \widehat \p^{-1}(B_{\e_p}(p))  \subset \widehat \p^{-1}(B_{2\e_p}(p)) \subset B_{\e_2} \subset B_{\e_3} \, .
$$
We have that $\o_1$ vanishes in $B_{\e_1}$, it is $\frj$-semipositive in $B_{\e_2}- B_{\e_1}$ and coincides with
$\widehat \o_a$ outside $B_{\e_2}$, so it is $\frj$-positive there. We are in the hypothesis of Lemma \ref{lem:gluing_forms},
and this gives our desired symplectic form $\bar \o_a$ in $\widehat U$ with $\bar \o_a=\widehat \o_a$
outside $B_{\e_3}$. The only point is that $\bar \o_a$ may not be
$\G''$-invariant; in case it is not, replace it by its average over $\G''$, which is also $\frj$-tame because diffeomorphisms on $\G''$ are holomorphic. Being $\o_1$ invariant under $\G''$, the average coincides with $\widehat \o_a$ outside $B_{\e_3}$.
\end{proof}
\begin{corollary}
The symplectic orbifold $(X, \cA', \bar \o_a)$ has only isolated singularities.
\end{corollary}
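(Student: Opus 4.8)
The plan is to localize the singular set of $(X,\cA',\bar\o_a)$ and reduce the claim to a freeness statement for a finite group action. First I would note that $\cA'$ restricts on $X-\S^1$ to the manifold atlas $\widehat\cA$ of Proposition \ref{prop:manifold structure}, whose charts have trivial isotropy; hence no point of $X-\S^1$ is singular, and every singularity of $(X,\cA',\bar\o_a)$ must sit at some $p\in\S^1$. Fixing such a $p$, with orbifold chart $(U,V,\G,\p)$ of $\cA$ and $\G<\U(2)$, I would invoke Corollary \ref{cor:new-orb-str} to replace it by the chart $(\widehat U,V,\widehat\p,\G'')$, in which $\G''$ acts on $\widehat U\subset\CC^2$ by biholomorphisms and is conjugate, through the equivariant biholomorphism $G\colon(\widehat U,\frj)\to(U/\G^*,\widehat\cA)$ carrying $0$ to the class of $0$, to the quotient $\G'=\G/\G^*$ acting on $U/\G^*$ as in Lemma \ref{lemma:trivial}. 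Since conjugate actions have matching fixed-point sets, it then suffices to prove that $\G'$ acts freely on $(U/\G^*)-\{[0]\}=(U-\{0\})/\G^*$; this exhibits $p$ as an isolated singular point.

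The heart of the argument is then a single computation: for every $u\in U$ with $u\ne0$, the stabilizer $\G_u=\{h\in\G:hu=u\}$ is contained in $\G^*$. I would argue that any $h\in\G_u-\{\id\}$ fixes the complex line $L=\CC u$ pointwise and, being unitary, acts on $L^\perp$ by a scalar of modulus $1$ different from $1$, so that $h$ is a complex reflection with $\fix(h)=L$. Consequently every point of $\p(L-\{0\})$ is singular with all its isotropy elements fixing the same line $L$; by the trichotomy of Section~\ref{subsec:local} this forces $\p(L-\{0\})$ to be an isotropy surface of $\S^*$ whose closure contains $p=\p(0)$, i.e.\ one of the surfaces $S_i$ accumulating at $p$, with $L=L_i$. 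By the description of $\S^*$, the isotropy group $\G_i$ of $S_i$ equals $\{h\in\G:L_i\subset\fix(h)\}=\G_u$, so $\G_u=\G_i\subset\G^*$ by the very definition of $\G^*$ in Lemma \ref{lemma:trivial}; when $\G_u=\{\id\}$ the inclusion is trivial.

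With this inclusion the freeness is immediate: if $[\g]\in\G'$ fixes $\G^*u$ for some $u\ne0$, then $\g u=\g^* u$ for some $\g^*\in\G^*$, whence $(\g^*)^{-1}\g\in\G_u\subset\G^*$ and therefore $\g\in\G^*$, i.e.\ $[\g]=[\id]$. This yields the desired freeness and shows that each $p\in\S^1$ is an isolated singularity, completing the proof. The step I expect to be the main obstacle is precisely the inclusion $\G_u\subset\G^*$: it rests on correctly matching the complex line fixed by a nontrivial stabilizer element with one of the isotropy surfaces $S_i$ accumulating at $p$, and on the fact (from Section~\ref{subsec:local}) that the isotropy group of such a surface consists of \emph{all} elements of $\G$ fixing the corresponding line, so that no complex reflection of $\G$ can escape $\G^*$.
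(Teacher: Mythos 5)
Your proof is correct and follows the same route the paper intends: the paper states this corollary without any proof, treating it as an immediate consequence of the construction of $\cA'$ (manifold charts away from $\S^1$, plus the charts $(\widehat U, V, \widehat\p, \G'')$ of Corollary \ref{cor:new-orb-str} at each $p\in\S^1$) together with the assertion—made earlier, also without justification—that $\G'$ acts freely on $(U-\{0\})/\G^*$. Your stabilizer computation, showing $\G_u\subset\G^*$ for every $u\neq 0$ via the observation that a nontrivial unitary element fixing $u$ is a complex reflection whose fixed line projects onto one of the surfaces $S_i$ accumulating at $p$, supplies exactly the missing justification for that freeness claim, so your argument is if anything more complete than the paper's.
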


\subsection{Cohomology groups of the resolution}

The computation of the cohomology groups of the resolution can be obtained from the results in \cite{CFM}.

\begin{proposition}\label{prop:cohom}
Let $\pi \colon (\widetilde X, \widehat \o) \to (X,\o)$ be a symplectic resolution of a symplectic orbifold. 
Define the subset of $\S^1$
$$
\Delta = \{ x \in \S^1 \mbox{ s.t. } \G_x/\G^*_x \neq \{1\} \},
$$
where $\G^*_x$ is the subgroup of $\G_x$ generated by the isotropy surfaces accumulating at $x$. 
For each $p\in \Delta \cup \S^0$, let $E_p=\pi^{-1}(p)$ be the exceptional set. 
For $k>0$ there is a short exact sequence:
$$
0 \rightarrow H^k(X) \xrightarrow{\pi^*} H^k(\widetilde{X}) \xrightarrow{i^*} \bigoplus_{p \in \S^0\cup \Delta} H^k(E_p) \rightarrow 0.
$$
\end{proposition}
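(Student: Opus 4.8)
The plan is to read $\pi$ as a map that collapses a disjoint union of exceptional sets to points and is a homeomorphism elsewhere, and then to run the long exact sequences of the corresponding pairs. Write $P=\S^0\cup\Delta$ for the finite set of points over which the resolution is genuinely modified: away from $\S^0$ and from $\Delta$ the construction of Section \ref{sec:resolution} only changes the orbifold structure along the surfaces $S\subset\S^*$ through the fibrewise homeomorphism $z\mapsto z^m$ and, at the points of $\S^1\setminus\Delta$ (where $\G_x=\G^*_x$ is generated by reflections, so $U/\G_x$ is already a manifold), produces no genuine singularity; hence on the complement of the exceptional sets $\pi$ is a homeomorphism of underlying topological spaces. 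Set $E=\bigsqcup_{p\in P}E_p$. First I would note that $\pi\colon \widetilde X\setminus E\to X\setminus P$ is a homeomorphism with $\pi(E_p)=\{p\}$, so collapsing yields a homeomorphism $\widetilde X/E\cong X/P$; since $(\widetilde X,E)$ and $(X,P)$ are good pairs, this gives a natural isomorphism $\pi^*\colon H^k(X,P)\xrightarrow{\ \cong\ }H^k(\widetilde X,E)$ for all $k$ (real coefficients).

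Next I would record the local topology of the fibres. Each $E_p$ is the exceptional set of the resolution of a normal surface quotient singularity $\CC^2/\G''$ of subsection \ref{ss:isolated-singularities} and \cite{CFM}; it is a connected configuration of rational curves whose dual graph is a tree, hence homotopy equivalent to a wedge of $r_p$ copies of $S^2$. Consequently $H^0(E_p)=\RR$, $H^2(E_p)\cong\RR^{r_p}$ with basis dual to the irreducible components $C_1,\dots,C_{r_p}$, and $H^k(E_p)=0$ for $k=1$ and $k\ge 3$. Since $P$ is finite, the long exact sequence of $(X,P)$ gives $H^k(X,P)\cong H^k(X)$ for $k\ge 2$, and composing with the isomorphism above identifies $H^k(\widetilde X,E)\cong H^k(X)$ in those degrees, with the composite $H^k(X)\cong H^k(\widetilde X,E)\xrightarrow{j^*}H^k(\widetilde X)$ equal to $\pi^*$ by naturality of the pair sequences under $\pi$.

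With this in place the proposition reduces to splitting the long exact sequence of $(\widetilde X,E)$,
$$\cdots\to H^k(\widetilde X,E)\xrightarrow{j^*}H^k(\widetilde X)\xrightarrow{i^*}H^k(E)\xrightarrow{\delta}H^{k+1}(\widetilde X,E)\to\cdots,$$
which happens exactly when the restriction $i^*\colon H^k(\widetilde X)\to H^k(E)$ is surjective: surjectivity in degree $k-1$ forces $\delta=0$ and hence $j^*$ injective, while surjectivity in degree $k$ produces the right-hand zero. As $H^k(E)=0$ except for $k=0,2$, the only substantive case is $k=2$, and this is the step I expect to be the main obstacle. I would prove it by pairing with the exceptional curves: each $C_i$ is a compact oriented surface in the closed oriented (symplectic) manifold $\widetilde X$, so it carries a Poincar\'e dual $\mathrm{PD}[C_i]\in H^2(\widetilde X)$ with $\langle\,\mathrm{PD}[C_i]|_{E_p},[C_j]\,\rangle=C_i\cdot C_j$. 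The intersection matrix $(C_i\cdot C_j)$ of the exceptional curves of a resolution of a normal surface singularity is negative definite, hence invertible; therefore the classes $\mathrm{PD}[C_i]|_{E_p}$ span $H^2(E_p)\cong\RR^{r_p}$ and $i^*$ is onto in degree $2$. This is precisely the local computation underlying \cite{CFM} (for the non-compact case one localizes near $E_p$, where the intersection form is still negative definite).

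Finally I would assemble the pieces. For $k\ge 2$ the surjectivity of $i^*$ in degree $2$, together with the vanishing of $H^{k-1}(E)$ for $k\ge 3$, splits the sequence into the desired short exact sequences with the maps $\pi^*$ and $i^*$; the degrees $k=3,4$ reduce to isomorphisms $\pi^*\colon H^k(X)\xrightarrow{\cong}H^k(\widetilde X)$ since $H^k(E)=0$. The one case not covered by the splitting is $k=1$, where $i^*\colon H^0(\widetilde X)\to H^0(E)$ fails to be surjective; there I would instead compare the four-term low-degree sequences of $(\widetilde X,E)$ and $(X,P)$ directly. Because $\pi$ identifies $H^0(\widetilde X)\cong H^0(X)$ and $H^0(E)\cong H^0(P)$ (each $E_p$ is connected and maps to $p$) as well as $H^1(\widetilde X,E)\cong H^1(X,P)$, the five lemma yields $\pi^*\colon H^1(X)\xrightarrow{\cong}H^1(\widetilde X)$, which is the asserted sequence since $H^1(E)=0$. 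The genuine difficulty throughout is the degree $2$ surjectivity, i.e.\ the negative-definiteness input; the remaining steps are bookkeeping with the exact sequences.
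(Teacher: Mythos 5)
Your proof is correct, but it takes a genuinely different route from the paper's. The paper's own proof is a two-step reduction: the partial resolution $(X,\widehat\cA,\widehat\o)\to(X,\cA,\o)$ leaves the underlying topological space unchanged and has isolated singularities exactly at $\S^0\cup\Delta$, and the short exact sequence is then simply quoted from \cite[Proposition 3.4]{CFM}. That cited computation is a Mayer--Vietoris argument: cover $\widetilde X$ by the complement of small balls around the singular points and by the resolved balls (which deformation retract onto the $E_p$); the overlaps are lens spaces $S^3/\G$, which are rational homology spheres, so over $\RR$ the gluing terms vanish and $H^k(\widetilde X)\cong H^k(X)\oplus\bigoplus_p H^k(E_p)$ for $k>0$ drops out at once. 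You instead run the long exact sequences of the pairs $(\widetilde X,E)$ and $(X,P)$, identified via the collapsing homeomorphism, and you obtain the one nontrivial point --- surjectivity of $i^*$ in degree $2$ --- from Poincar\'e duals of the exceptional curves together with Mumford's negative definiteness of the intersection matrix of a resolution of a normal surface singularity. Both arguments are sound; yours is self-contained and exhibits explicit classes hitting a basis of each $H^2(E_p)$ (in effect a splitting of the sequence), at the price of invoking negative definiteness, which the Mayer--Vietoris route never needs: there the degree-$2$ surjectivity comes for free from $H^2(S^3/\G,\RR)=0$. Two details you should tighten. First, the proposition does not assume $X$ compact (Theorem \ref{thm:main-thm-bis} only requires the closures $\bar{S}$ to be compact), so the phrase ``closed oriented manifold $\widetilde X$'' must be replaced throughout by the localized construction you mention only in passing: take as $\mathrm{PD}[C_i]$ the Thom class of the normal bundle of $C_i$, supported in a tubular neighbourhood and extended by zero, which pairs with $[C_j]$ to give $C_i\cdot C_j$ regardless of compactness. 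Second, the isomorphism $H^k(X,P)\cong H^k(\widetilde X,E)$ deserves the (easy) remark that $E$ and $P$ admit neighbourhoods deformation-retracting onto them --- the resolved balls retract onto the $E_p$, and the singular points of $X$ have cone neighbourhoods --- so both pairs are good pairs and the relative cohomologies can be computed from the homeomorphic quotients $\widetilde X/E\cong X/P$.
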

\begin{proof}
The symplectic resolution of $(X,\cA,\o)$ is divided into two steps; we first perform a partial resolution $(X,\widehat \cA, \widehat \o) \to (X,\cA,\o)$. The underlying topological space of the partial resolution does not change but its singularities are isolated and consists precisely of the points in $\Delta \cup \S^0$.
After this, we construct a resolution  $(\widetilde{X}, \widetilde{\cA}, \widetilde{\o})\to(X,\widehat \cA, \widehat \o)$ employing the method described in \cite[Theorem 3.3]{CFM}.
The cohomology ring of $\widetilde{X}$ was computed in \cite[Proposition 3.4]{CFM} and implies the statement.
\end{proof}

\section{Examples} \label{sec:examples}

In this section we give some examples of $4$-orbifolds to which the resolution described above can be applied.

\subsection*{Products of orbifolds}

Let $(S, \o)$ be a compact symplectic $2$-dimensional orbifold. 
Its isotropy set consists of an isolated set of points $\{p_0,\dots, p_n \}$; we denote the isotropy group of $p_j$ by $\mathrm{G}_j$.

Consider the product orbifold $(S \times S, \o + \o)$ and the symplectic involution $R(x,y)=(y,x)$ . 
Let us define the symplectic orbifold $X=S \times S / \ZZ_2 $, where $\ZZ_2=\{ R, \rId \}$, 
and denote $q \colon S \x S \to X$ the projection to the orbit space. The isotropy set of $X$ is $\S^* \cup \S^1$, where:
\begin{enumerate}
\item $\S^*= q (\cup_{j=1}^n (S-\{p_1,\dots,p_n\}) \times \{p_j\})  \cup 
q (\{(x,x), x \in S-\{p_1,\dots,p_n\}\})$,
\item $\S^1=q (\{(p_j,p_k), \quad 1 \leq j \leq k \leq n\})$.
\end{enumerate}
The isotropy group of points on $q ((S-\{p_1,\dots,p_n\}) \times \{p_j\})$ is $\mathrm{G}_{j}$, and for points on $q (\{(x,x), x \in S-\{p_1,\dots,p_n\}\})$ it is $\ZZ_2$. If $j<k$, the isotropy group of $(p_j,p_k)$ is $\mathrm{G}_{jk}=\mathrm{G}_j \times \mathrm{G}_k$. If $j=k$ a presentation of the isotropy group
$\mathrm{G}_{jj}$ is $ \la \xi , \mathrm{G}_j \times \mathrm{G}_j  |\quad  \xi^2= 1, \quad \xi (\g,\g')= (\g',\g) \xi \ra $. 
Indeed if $(U,V,\p,\G_j)$  is an orbifold chart around $p_j$ on $S$, 
then an orbifold chart around $q(p_j,p_j)$ on $X$ is:
$$
(U\x U, q (V\times V), q \circ (\p \times \p), G_{jj})
$$
where the action of $\xi$ is given by $\xi(z,w)=(w,z)$, and the action of $\mathrm{G}_j \x \mathrm{G}_j$ 
is $(\g,\g')(z,w)=(\g z, \g' w)$.

Theorem \ref{thm:main-thm} allows us to obtain a symplectic resolution of the orbifold $(X,\o)$; 
this resolution is homeomorphic to $X$ because one can check that $\mathrm{G}_{jk}'= \{1\}$, 
following the notation of Lemma \ref{lemma:trivial}.

\medskip

Now let $(S',\o')$ be a compact $2$-dimensional symplectic orbifold (possibly different from $(S,\o)$),
with singularities $\{ p_0',\dots, p_m'\}$ and isotropy groups $\G_1',\dots, \G_m'$. 
We can also consider the product orbifold $(S \x S', \o + \o')$.
The isotropy set is $\S^*\cup \S^1$, with $\S^1=\{ (p_j,p_k'), \quad 1 \leq j \leq n, 1 \leq k \leq m\}$. The isotropy group of $(p_j,p_k)$ is $G_{jk}=\G_j \x \G_k'$ and verify that $G_{jk}'=\{1\}$. 
The orbifold $(S \times S',\o + \o')$ verifies the hypothesis of Theorem \ref{thm:main-thm}, and its resolution is homeomorphic to $S \x S'$. Note that one could also have constructed the resolution as $(\widetilde S \x \widetilde S', \widetilde \o + \widetilde \o')$, where $q \colon (\widetilde S, \widetilde \o) \to (S,\o)$ 
and $q \colon (\widetilde S, \widetilde \o) \to (S,\o)$ 
are the symplectic resolutions provided in \cite{CFM}.

\subsection*{Mapping torus over a surface of genus $2$}

Consider $\S_2$ a genus $2$ surface smoothly embedded in $\RR^3$ with coordinates $(x,y,z) \in \RR^3$.
We require that $\S_2$ is symmetric with respect to the planes $\{x=0\}$, $\{y=0\}$ and $\{z=0\}$.
Consider the symplectic form in $\S_2$ given by $\o_{\S_2}= \iota_N (\vol_3)|_{\S_2}$, 
being $N$ the outer unit lenght normal to $\S_2$, and $\vol_3=dx \wedge dy \wedge dz$ the volume form of $\RR^3$.
Consider the maps $\p(x,y,z)=(-x,y,-z)$, $\g(x,y,z)=(-x,-y,z)$; these restrict to symplectomorphisms
of $(\S_2, \o_{\S_2})$ since they preserve $N$ and $\vol_3$.

\begin{center}
\includegraphics[scale=0.6]{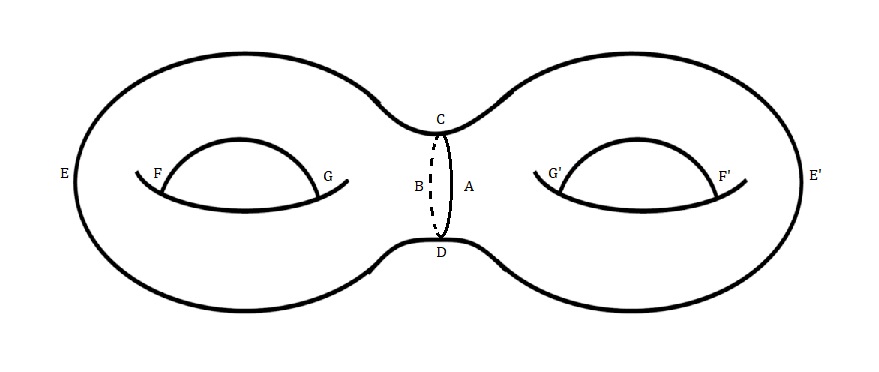}
\end{center}

Consider $M_\g(\S_2)$ the mapping torus of $\S_2$ by $\g$; that is, 
 $M_\g(\S_2)= (\S_2 \x I) / \hspace{-1.5mm} \sim$  where $(p,1) \sim (\g(p),-1)$ and $I=[-1,1]$.
In the space $M_\g(\S_2) \x S^1$ we lift the action of $\p$ as 
$$
\p([p,t],s)=([\p(p),t],s)
$$
for $[p,t] \in M_\g(\S_2)$, $s \in S^1=[-1,1]/ \hspace{-1mm} \sim$. 

Note that this action is well defined because if we take
$(p,1)$ and $(\g(p),-1)$ two representatives of the same class, they get mapped to $(\p(p),1)$
and $(\p(\g(p)),-1)=(\g(\p(p)),-1)$, so their images represent the same class also. 
Take also the map $\xi$ acting on $M_\g(\S_2) \x S^1$ as 
$$
\xi([p,t],s)=([p,-t],-s) \, .
$$
The above action is well-defined
because $(p,1,s)$ and $(\g(p),-1,s)$ are mapped to $(p,-1,-s)$ and $(\g(p),1,-s)$,
and $(\g(p),1) \sim (p,-1)$ since $\g^2=\Id$.
On the other hand let us consider the symplectic form on $M_\g(\S_2) \x S^1$ given in coordinates as 
$$
\o=\o_{\S_2} + dt \wedge ds.
$$
Near a point $([p,1],s) = ([\g(p),-1],s) \in M_\g(\S_2) \x S^1$ we can take a chart of the form 
$$
U^p \x (1-\e,1] \x (s-\e, s + \e) \cup \g(U^p) \x [-1,-1+\e) \x (s-\e, s + \e)
$$
where the above expression for $\o$ is well-defined, since $\g$ is a symplectomorphism of $\S_2$.

We can describe $M_\g(\S_2) \x S^1$ in an alternative manner. 
Consider $Y=\S_2 \x \CC^2$ and the isometries of $Y$, 
$\t_1(p,w)=(\g(p),w + 1)$, $\t_2(p,w)=(p,w + i)$. 
These determine a $\ZZ^2$-K\" ahler action on $Y$ and $M_\g(\S_2) \x S^1= Y / \ZZ^2$, 
hence $M_\g(\S_2) \x S^1$ is K\" ahler.

Note that in the symplectic manifold $(M_\g(\S_2) \x S^1,\o)$ the group $\G=\langle \p , \xi \rangle \cong \ZZ_2 \x \ZZ_2$ acts by symplectomorphisms.
We define a $4$-orbifold $X$ as 
$$
X= \frac{M_\g(\S_2) \x S^1}{\langle \p, \xi \rangle}
$$
so $(X,\o)$ is a symplectic orbifold.

Let us study the isotropy subset of $X$. 
We may abuse notation and identify
the isotropy points of $X$ with the isotropy points of the action
of $\langle \p , \xi \rangle$ in $M_\g(\S_2) \x \S^1$; the context should clarify each case.
The maps $\p, \g, \g \circ \p \colon \S_2 \to \S_2$ have the following fixed points 
$$
\fix(\p)=\{A, B\} \, , \,  \fix(\g)=\{C,D\} \, , \,  \fix(\g \circ \p)=\{E,F,G, E',F',G'\}  \subset \S_2
$$
with $A=(0,1,0), B=(0,-1,0), C=(0,0,1), D=(0,0,-1)$,
and $\fix(\g \circ \p)$ corresponds to the six points of intersection of $\S_2$ with the $x$-axis.
Note also that $\g(A)=B, \g(B)=A$, $\p(C)=D$, $\p(D)=C$, 
and $\p(E')=E$, $\p(F')=F$, $\p(G')=G$.

The isotropy points for the group $\langle \p, \xi \rangle$ acting on $M_\g(\S_2) \x S^1$ are as follows:
\begin{itemize}
\item Isotropy surfaces given by
\begin{align*}
S_\p & = \{([A,t],s) \mbox{ s.t. } (t,s) \in I^2\} \cup \{([B,t],s) \mbox{ s.t. } (t,s) \in I^2\} \, \, , \\
S^1_\xi & =\{([p,0],0) \mbox{ s.t. } p \in \S_2\} \, , \\
S^2_\xi & =\{([p,0],1) \mbox{ s.t. } p \in \S_2\} \, .
\end{align*}
Note that $S_\p$ is a torus, since $(A,1,s) \sim (B,-1,s)$, and $S^i_\xi$ are
surfaces with genus $2$, identified with $\S_2$.
The generic points of $S_\p$ have isotropy $\langle \p \rangle \cong \ZZ_2$, 
and those of $S_\xi$ have isotropy $\langle \xi \rangle \cong \ZZ_2$.

\medskip

\item The intersection of $S_\p$ and the $S^i_\xi$ are the points $A_0=([A,0],0)$,
$B_0=([B,0],0)$, $A_1=([A,0],1)$, and $B_1=([B,0],1)$; these are points of isotropy $\langle \p, \xi \rangle \cong \ZZ_2 \x \ZZ_2$.

\medskip

\item Eight isolated isotropy points. Two of them, $C_1=([C,1],1)$ and $D_1=([D,1],1)$,
have isotropy $\langle \xi \rangle \cong \ZZ_2$; the rest of them are the points $E_1=([E,1],1)$, $F_1=([F,1],1)$, 
$G_1=([G,1],1)$, $E'_1=([E',1],1)$, $F'_1=([F',1],1)$, 
$G'_1=([G',1],1)$, all with isotropy $\langle \p \circ \xi \rangle \cong \ZZ_2$.
\end{itemize}

Of the above fixed points in $M_\g(\S_2) \x S^1$ not all of them are different in the quotient $X$: we have $E_1 \sim E'_1$, $F_1 \sim F'_1$, $G_1 \sim G'_1$.
Moreover $S^i_\xi$ becomes a torus $\S_2/\langle \p \rangle$ in $X$,
and $S_\p$ becomes a sphere. 

Following the previous notation for the isotropy points of an orbifold $X$,
the isotropy subset $\S$ of $X$ decomposes as $\S=\S^* \cup \S^1 \cup \S^0$,
with $\S^1=\{A_0,B_0,A_1,B_1\}$, $\S^*= (S_\p \cup S^1_\xi \cup S^2_\xi) - \S^1$,
and $\S^0=\{C_1,D_1,E_1, F_1, G_1\}$.

Now we compute the betti numbers of $X$. 
For this it is useful to express $X$ in an alternative way.
Recall that the quotient $T=\S_2/\hspace{-0.2mm}\langle \p \rangle$ is a torus; its fundamental domain
being $D_T=\S_2 \cap \{x \ge 0\} \subset \RR^3$ with identifications $(0,y,z) \sim (0,y,-z)$.
The map $\g \colon \S_2 \to \S_2$ commutes with $\p$, so it descends to a homeomorphism of $T$.
Consider the mapping torus 
$$
M_\g(T)= (T \x [-1,1])/\sim
$$ 
where $([p],1) \sim ([\g(p)],-1)$.
It is immediate to check that $X = ( M_\g(T) \x S^1 ) / \langle \xi \rangle$.

The following well-known Lemma is necessary for the computation of the fundamental group of $X$.
\begin{lemma} \label{lem:mapping-torus}
Let $T$ be a $CW$-complex, and $\g \colon T \to T$ a homeomorphism which fixes a point $x_0 \in T$.
Let $M_\g(T)= T \x [0,1]/\hspace{-1mm}\sim$ with $(x,0) \sim (\g (x),1)$.
Then $\pi_1(M_\g(T)) \cong \pi_1(S^1) \ltimes_{\g_*} \pi_1(T)$.
\end{lemma}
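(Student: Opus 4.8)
The plan is to exploit the fact that, because $\g$ is a homeomorphism, the mapping torus $M_\g(T)$ is the total space of a locally trivial fibre bundle over the circle with fibre $T$, namely $p \colon M_\g(T) \to S^1$, $p([x,t])=[t]$, where $S^1=[0,1]/(0\sim 1)$. First I would record this bundle structure together with the associated long exact sequence in homotopy
\[
\cdots \to \pi_2(S^1) \to \pi_1(T) \xrightarrow{\iota_*} \pi_1(M_\g(T)) \xrightarrow{p_*} \pi_1(S^1) \to \pi_0(T) \to \cdots
\]
Since $\pi_2(S^1)=0$ and $T$ is path-connected (so that $\pi_0(T)$ is trivial), exactness collapses this to a short exact sequence
\[
1 \to \pi_1(T) \xrightarrow{\iota_*} \pi_1(M_\g(T)) \xrightarrow{p_*} \pi_1(S^1) \to 1 .
\]

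Next I would produce a splitting. The hypothesis that $\g$ fixes $x_0$ guarantees that $\sigma([t])=[x_0,t]$ is a well-defined section $\sigma \colon S^1 \to M_\g(T)$ of $p$, because $[x_0,0]=[\g(x_0),1]=[x_0,1]$. Hence $\sigma_*$ splits $p_*$, the short exact sequence above is split, and we obtain $\pi_1(M_\g(T)) \cong \pi_1(S^1) \ltimes \pi_1(T)$ as an abstract semidirect product in which $\pi_1(T)$ is the normal factor.

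It then remains to identify the action of $\pi_1(S^1)\cong\ZZ$ on $\pi_1(T)$. By construction of the splitting, the generator $t\in\pi_1(M_\g(T))$ coming from $\sigma$ acts on a class $\alpha\in\iota_*\pi_1(T)$ by conjugation $\alpha\mapsto t\alpha t^{-1}$, and this conjugation is precisely the monodromy of the bundle once around the circle. A direct homotopy computation---dragging a loop in the fibre $T\times\{0\}$ along $\sigma$ and using the gluing $(x,0)\sim(\g(x),1)$---shows that this monodromy is induced by $\g$, i.e.\ equals $\g_*$, which identifies the semidirect product as $\pi_1(S^1)\ltimes_{\g_*}\pi_1(T)$. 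The main obstacle is exactly this last step: pinning down the monodromy with the correct orientation so that it is $\g_*$ rather than $\g_*^{-1}$, which requires careful bookkeeping of the direction in which $\sigma$ traverses the circle relative to the gluing convention (a sign error here replaces $\g_*$ by its inverse, corresponding merely to the other generator of $\pi_1(S^1)$ and hence leaving the isomorphism type unchanged). Alternatively one could bypass the fibration altogether and obtain the same presentation by applying van Kampen's theorem to a cover of $M_\g(T)$ by two mapping-cylinder neighbourhoods, at the cost of a longer but entirely elementary computation.
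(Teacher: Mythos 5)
Your proof is correct and follows essentially the same route as the paper's: the fibre bundle $T \to M_\g(T) \to S^1$, the resulting short exact sequence on $\pi_1$, the section $t \mapsto [x_0,t]$ through the fixed point giving the splitting, and the identification of the conjugation action of the generator of $\pi_1(S^1)$ with the monodromy $\g_*$. The only difference is one of detail: the paper writes out the explicit homotopy $H_s$ realizing $\a g \a^{-1} \simeq \g_*(g)$, which is precisely the ``direct homotopy computation'' you defer to careful bookkeeping.
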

\begin{proof}
Recall first that the operation in $\pi_1(S^1) \ltimes_{\g^*} \pi_1(T)$
is 
$$
(n,g) \cdot (n',g')=(n+n', g \cdot  \g_*^n(g')) \, ,
$$
where $\g_* \colon \pi_1(T,x_0) \to \pi_1(T,x_0)$ is the induced map.

We have a bundle structure on $M_\g(T)$ given by $T \xrightarrow{i} M_\g(T) \xrightarrow{\pi_*} S^1$,
where $i(x)=[x,0]$ and $\pi([x,t])=t$.
This gives a short exact sequence 
$$
1 \to \pi_1(T) \xrightarrow{i_*} \pi_1(M_\g(T)) \xrightarrow{\pi_*} \pi_1(S^1) \to 1 \, .
$$
There is a section $s \colon S^1 \to M_\g(T), t \mapsto (x_0,t)$; it is well-defined because $\g(x_0)=x_0$.
This gives $s_* \colon \pi_1(S^1) \to \pi_1(M_\g(T))$
a right inverse for $\pi$, which gives an splitting of the above short exact sequence; 
then $\pi_1(M_\g(T))$ is the semi-direct product of $\pi_1(T)$
and $\pi_1(S^1)$, where the action of $\pi_1(S^1)$ in $\pi_1(T)$ is by conjugation.

Let us call $\a=s_*(1)$, where $1 \in \pi_1(S^1)$ is the generator.
Note that $\a(t)=[(x_0,t)]$, $t \in [0,1]$.
It only remains to see that every $g \in \pi_1(T)$ satisfies that 
$\a g  \a^{-1}=\g_*(g)$ in $\pi_1(M_\g(T))$.

Consider the homotopy $H \colon S^1 \x [0,1] \to M_\g(T)$ given as
$$
H_s(t)=
\begin{cases}
(x_0,3ts),     & t\in[0,\tfrac{1}{3}]\\
(\g ( g(3t-1)),s), & t\in[\tfrac{1}{3},\tfrac{2}{3}]\\
(x_0,3(1-t)s), & t\in[\tfrac{2}{3}, 1].
\end{cases}
$$
It is immmediate to check that $[H_0]= \g_*(g)$ and $[H_1] =  \a g \a^{-1}$, proving the Lemma.
\end{proof}

Now we compute the fundamental group of $M_\g(T)$, with $T=\S_2/\langle \p \rangle$ as above. 
Take as base point $[C]=[D] \in T$,
which is a fixed point by $\g$, and choose generators $a, b$ for $\pi_1(T)$ so that
a representative for $a=[\a]$ in the fundamental domain $D_T$ is the circle 
$$
\a=D_T \cap \{z=1\}=\{(2+\cos t, \sin t, 1) \colon 0 \le t \le 2 \pi\} \, .
$$
Similarly, a representative for $b=[\b]$ is a semicircle
$$
\b=\{(\cos t, 0, \sin t) \colon \pi/2 \le t \le 3 \pi/2\}
$$
going from $C$ to $D$ in $D_T \cap \{y=0\}$; $\b$ descends 
to a loop in the quotient $T=D_T/\sim$.
By Lemma \ref{lem:mapping-torus}, the fundamental group of $M_\g(T)$ is
$$
\pi_1(M_\g(T)) \cong \pi_1(S^1) \ltimes_{\g_*} \pi_1(T)  \cong \ZZ \ltimes_{\g_*} \ZZ^2
$$
with operation $(n,x) \cdot (n',x')=(n+n', x + (\g_*)^n(x'))$,
being $\g_* \colon \pi_1(T) \to \pi_1(T)$ the automorphism induced by $\g \colon T \to T$
in $\pi_1(T)=\pi_1(T,[C])$.

In order to compute $\g_*$, we take the representatives in $D_T$ of $a$ and $b$ described above and compute
their image by $\g_*$; note that $\g$ seen as a map in $\S_2$ does not map $D_T$ to itself,
but $\p \circ \g(x,y,z)=(x,-y,-z)$ does, 
and both maps induce the same map on the quotient $T=\S_2/\langle \p \rangle$. 
The loop $a=[(2+\cos t, \sin t, 1)]$, $0 \le t \le 2 \pi$,
is mapped to $\p \circ \g (a)=[(2+\cos t, -\sin t, -1)]$, and this is a circle in $D_T \cap \{z=-1\}$
homotopic to $a$ but with the opposite orientation as $a$, so $\g_*(a)=-a$.
Similarly, $b=[(\cos t, 0, \sin t)]$, $\pi/2 \le t \le 3 \pi/2$, 
is mapped to $\p \circ \g (b)=[(\cos t, 0, -\sin t)]$, 
again the same circle but with opposite orientation, so $\g_*(b)=-b$. 
We conclude that 
$$
\g_*=-\Id \colon \pi_1(T) \to \pi_1(T) , \, x \mapsto -x \, .
$$
It follows that $\pi_1(M_\g(T)) \cong \ZZ \ltimes \ZZ^2$
with operation given by 
$$
(n,x)\cdot(n',x')=(n+n',x + (-1)^n x') \, .
$$
We claim that the abelianization of this group is $H_1(M_\g(T),\ZZ) \cong \ZZ \x \ZZ_2 \x \ZZ_2$.
Indeed, if we impose the condition that $(1,x) \cdot (0,x)$ and $(0,x) \cdot (1,x)$ coincide
we get that $(1,0)$ equals $(1,2x)$, hence $2x=0$ for all $x$ in the abelianization. 
This applies to the generators $a, b$. Once we impose that in the abelianization every $x$ equals $-x$, 
the operation $\cdot$ becomes commutative, hence the claim.

From this it follows that
\begin{align*}  
\pi_1(M_\g(T) \x S^1) & \cong (\ZZ \ltimes \ZZ^2) \x \ZZ \\
H_1(M_\g(T) \x S^1,\ZZ) & \cong \ZZ \x \ZZ_2 \x \ZZ_2 \x \ZZ \cong \ZZ_2^2 \x \ZZ^2 \, .
\end{align*}
Note that torsion part of the homology comes from the torus $T$ and the free part comes from the two circles
associated to the coordinates $(s,t)$.
When passing to real coefficients we can consider de Rham cohomology
and we get $H^1(M_\g(T) \x S^1,\RR)= \langle dt, ds \rangle$.
As the action of $\xi$ in $M_\g(T) \x S^1$ sends $dt, ds$ to $-dt, -ds$,
it follows that the cohomology of the orbifold $X=(M_\g(T) \x S^1)/\langle \xi \rangle$ is 
the $\xi$-invariant part of $\langle dt, ds \rangle$, i.e. $H^1(X,\RR)=0$.

\medskip

Now let us compute the fundamental group of $X$. Recall that $M_\g(T) \x S^1$
is a torus bundle over a torus, i.e. $T \to M_\g(T) \x S^1 \to S^1 \x S^1$
where fibers are given by $T=\{([p,t_0],s_0) \mbox{ s.t. } p \in T\}$ and the bundle map
sends $([p,t],s)$ to $(t,s)$.
We have a short exact sequence
$$
1 \to \pi_1(T) \xrightarrow{i_*} \pi_1(M_\g(T) \x S^1) \xrightarrow{\pi_*} \pi_1(S^1 \x S^1) \to 1
$$
where $i \colon T \to M_\g(T) \x S^1,  \,  p \mapsto ([p,t],s)$ is the inclusion of the fiber $F_{(t,s)} \cong T$,
and the bundle map is $\pi \colon M_\g(T) \x S^1 \to S^1 \x S^1 , \, ([p,t],s) \mapsto (t,s)$.
Consider 
$$
q \colon M_\g(T) \x S^1 \to X= (M_\g(T) \x S^1)/\langle \xi \rangle
$$
the quotient map. Take as base points $A_0$ and $q(A_0)$ respectively.
Since $A_0$ is fixed by $\xi$, we have $q^{-1}(q(A_0))=\{ A_0 \}$.
This gives that $q_* \colon \pi_1(M_\g(T) \x S^1) \to \pi_1(X)$ is an epimorphism by \cite[Corollary 6.3]{Bre}.

In $M_\g(T) \x S^1$ there are two fibers invariant by the action of $\xi$ and not formed by fixed points, namely $F_{(1,0)}$ and $F_{(1,1)}$. Let us take as base points $A$, $([A,1],0)$ and $(1,0)$ respectively.
Call $F \cong T$ any of these fibers. Under the quotient map $q$, $F$ is mapped to $q(F) \cong T/\langle \g \rangle$. This is so because 
\begin{align*}
\xi([p,1],0)=([p,-1],0)=([\g(p),1],0) \, ,\\
\xi([p,1],1)=([p,-1],-1)=([\g(p),1],1) \, ;
\end{align*}
hence $q \circ i(p)=q \circ i(\g(p))$ for $p \in T$,
being $i \colon T \to F \subset M_\g(T) \x S^1$ the inclusion. 
Recall that $q(F)=F/\langle \g \rangle \cong \S_2 /\langle \p, \g \rangle \cong S^2$
is topologically a sphere, so we call $S^2=T/\langle \g \rangle$.
The map $q_* \circ i_* \colon  \pi_1(F) \to \pi_1(M_\g(T) \x S^1)$
factors through $\pi_1(S^2)=\{1\}$, so it is constant.
Hence $\im(i_*)=\ker (\pi_*) \subset \ker(q_*)$, so the map $q_*$
induces a map $\bar q_* \colon \pi_1(S^1 \x S^1) \to \pi_1(X)$
in the quotient $\pi_1(M_\g(T) \x S^1)/\pi_1(F) \cong \pi_1(S^1 \x S^1)$.

Note that $\pi_1(S^1 \x S^1)$ can be seen as a subgroup of $\pi_1(M_\g(T) \x S^1)$
via the section 
$$
f \colon S^1 \x S^1 \to M_\g(T) \x S^1 , \, (t,s) \mapsto 
\begin{cases}
([A,1+2t],s) \, ,     & t\in[-1,0]\\
([B,-1+2t],s) \, , & t\in[0,1].
\end{cases}  \, .
$$

The image of $f$ is precisely the isotropy surface $S_\p$,
whose image by $q$ is $q(S_\p)=S_\p/\langle \xi \rangle \cong S^2$,
homeomorphic to a sphere.
As $\bar q_*=q_* \circ f_*$ factors through $\pi_1(q(S_\p))={1}$,
we see that $\bar q_*=1$, so $q=1$ and $X$ is simply connected.


Now let us compute the second homology of $X$ over $\RR$.

\begin{proposition} \label{prop:cohom-X}
$$H^2(X,\RR)= \la \o_{\S_2} , dt\wedge ds \ra$$
\end{proposition}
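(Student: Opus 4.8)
The plan is to exploit the alternative description $X=(M_\g(T)\x S^1)/\la\xi\ra$ obtained above, where $T=\S_2/\la\p\ra$ is a torus. Since $\la\xi\ra\cong\ZZ_2$ is a finite group acting on $Y=M_\g(T)\x S^1$, the quotient map induces an isomorphism onto the invariant subspace in real cohomology,
$$
H^2(X,\RR)\cong H^2(M_\g(T)\x S^1,\RR)^{\la\xi\ra},
$$
so it suffices to compute $H^2(Y,\RR)$ and the induced $\xi$-action on it.

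For the cohomology of $Y$ I would first apply the K\"unneth formula,
$$
H^2(Y,\RR)\cong H^2(M_\g(T),\RR)\,\oplus\,\bigl(H^1(M_\g(T),\RR)\ox H^1(S^1,\RR)\bigr),
$$
the remaining summand $H^0(M_\g(T))\ox H^2(S^1)$ vanishing because $H^2(S^1,\RR)=0$. The cohomology of the mapping torus $M_\g(T)$ is then computed from the Wang exact sequence of the fibration $T\hookrightarrow M_\g(T)\to S^1$, whose connecting maps are $\g^*-\id$ acting on $H^*(T,\RR)$. The computation of $\g_*$ carried out above shows $\g^*=-\id$ on $H^1(T,\RR)\cong\RR^2$; dually, since $H^2(T,\RR)=\Lambda^2 H^1(T,\RR)$, we obtain $\g^*=+\id$ on $H^2(T,\RR)$ (equivalently, $\g$ preserves the orientation of $T$, consistent with $M_\g(T)$ being orientable). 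Because $\g^*-\id=-2\,\id$ is invertible on $H^1(T,\RR)$ its kernel and cokernel vanish, while on $H^0$ and $H^2$ the map $\g^*-\id$ is zero. Feeding this into the Wang sequence yields $H^1(M_\g(T),\RR)=\la dt\ra$ and $H^2(M_\g(T),\RR)=\la\o_{\S_2}\ra$, the latter generated by the class of the fiberwise area form (well defined since $\o_{\S_2}$ is $\p$- and $\g$-invariant, and nonzero because it integrates nontrivially over a fiber $T$). This is consistent with the already computed $H^1(Y,\RR)=\la dt,ds\ra$ and with Poincar\'e duality $b_2(M_\g(T))=b_1(M_\g(T))=1$. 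Hence
$$
H^2(Y,\RR)=\la\o_{\S_2},\,dt\wedge ds\ra.
$$

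Finally I would read off the $\xi$-action on the two generators. As $\xi([p,t],s)=([p,-t],-s)$ fixes the fiber coordinate $p\in T$ and reverses $t$ and $s$, we get $\xi^*\o_{\S_2}=\o_{\S_2}$ and $\xi^*(dt\wedge ds)=(-dt)\wedge(-ds)=dt\wedge ds$, so both classes are $\xi$-invariant. Therefore the invariant subspace is all of $H^2(Y,\RR)$, and $H^2(X,\RR)=\la\o_{\S_2},\,dt\wedge ds\ra$.

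The main obstacle is pinning down the action $\g^*$ on $H^*(T,\RR)$; this, however, reduces entirely to the computation $\g_*=-\id$ on $H_1(T)$ already performed, after which the remaining work is the routine bookkeeping in the Wang sequence and the verification that the fiber area class generates $H^2(M_\g(T))$. A secondary technical point is the identification $H^2(X,\RR)\cong H^2(Y,\RR)^{\la\xi\ra}$, which is the standard transfer isomorphism for the real cohomology of a finite quotient.
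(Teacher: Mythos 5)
Your proof is correct, and it takes a genuinely different route from the paper's --- in fact a safer one. The paper computes $H^2(X,\RR)$ as the $\la \p,\xi \ra$-invariant part of $H^2(M_\g(\S_2)\times S^1,\RR)$, which requires knowing $\g^*$ on $H^1(\S_2,\RR)$; you instead pass through the intermediate quotient $X=(M_\g(T)\times S^1)/\la \xi \ra$ with $T=\S_2/\la \p \ra$, so that the only input is $\g^*$ on $H^*(T,\RR)$, i.e.\ precisely the computation $\g_*=-\id$ on $\pi_1(T)\cong\ZZ^2$ already established in the paper. This difference matters, because the paper's intermediate claim that $\g^*=-\rId$ on $H^1(\S_2,\RR)$ is false: $\g$ is an involution of $\S_2$ with exactly two fixed points, each of local index $+1$ (at a fixed point $d\g=-\id$ on the tangent plane), so the Lefschetz fixed point theorem gives $\operatorname{tr}\left(\g^*|_{H^1(\S_2,\RR)}\right)=2-L(\g)=0$, hence $\g^*$ has eigenvalues $(1,1,-1,-1)$ on $H^1(\S_2,\RR)\cong\RR^4$; equivalently, $\S_2/\la \g \ra$ is a torus, so a $2$-dimensional $\g$-invariant subspace survives (it is $(\g\circ\p)^*$, with its six fixed points, that equals $-\rId$ on $H^1(\S_2,\RR)$). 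Consequently $H^2(M_\g(\S_2),\RR)$ is $3$-dimensional rather than $\la \o_{\S_2} \ra$, and $H^1(M_\g(\S_2),\RR)\wedge\la ds \ra$ is $3$-dimensional rather than $\la dt\wedge ds \ra$; the paper's final answer survives only because the four extra classes come from the $\p$-anti-invariant subspace $\ker(\g^*-\id)\subset H^1(\S_2,\RR)$ (note $\p^*\g^*=-\rId$ forces $\g^*=-\p^*$ there), so they are killed upon taking $\la \p,\xi \ra$-invariants. Your decomposition sidesteps this pitfall entirely: on $H^1(T,\RR)=H^1(\S_2,\RR)^{\la \p \ra}$ one genuinely has $\g^*=-\id$, so $\g^*-\id$ is invertible there, the Wang sequence gives $b_1(M_\g(T))=b_2(M_\g(T))=1$ as you state, and your remaining steps --- K\"unneth, the transfer isomorphism $H^2(X,\RR)\cong H^2(M_\g(T)\times S^1,\RR)^{\la \xi \ra}$, and the $\xi$-invariance of $\o_{\S_2}$ and $dt\wedge ds$ --- are all sound. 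In short, your argument is a complete proof, while the paper's reaches the correct statement through an incorrect intermediate claim that your choice of covering avoids.
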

\begin{proof}
First of all one can prove that  $H^2(X,\RR) \cong H^2(M_\g(\S_2) \times S^1,\RR)^{\la \p, \xi \ra }$ by averaging closed forms. Kunneth formula ensures that  
$$
H^2(M_\g(\S_2) \times S^1,\RR)= H^1(M_\g(\S_2),\RR)\wedge \langle ds \rangle \oplus H^2(M_\g(\S_2),\RR).
$$
The first summand is of course equal to $\la dt \wedge ds \ra$; to compute the second we take into account \cite[Lemma 12]{BFM}:
\begin{align*}
H^{2}(M_\g(\S_2)) =& \ker( \rId - \g^* \colon H^2(\S_2, \RR) \to H^2 (\S_2,\RR)) \\
 &\oplus  \coker( \rId - \g^* \colon H^1(\S_2, \RR) \to H^1(\S_2,\RR)) \wedge \langle dt \rangle.
\end{align*}
On the one hand, $\g^*=\rId \colon H^2(\S_2) \to H^2(\S_2)$ because $ \g_*(\o_{\S_2})=\o_{\S_2}$, as was previously argued. On the other,  $\g^*=-\rId \colon H^1(\S_2, \RR) \to H^1(\S_2,\RR)$; this can be deduced from the fact that $\g_*=-\rId$. Thus, 
$$
H^{2}(M_\g(\S_2))= \la \o_{\S^2} \ra.
$$
The proof concludes by observing that both $\o_{\S^2}$ and $dt \wedge ds$ 
are invariant under the action of $\la \p, \xi \ra$.
\end{proof}

\begin{proposition}
Let $\pi \colon \widetilde X \to X$ the symplectic resolution of $X$.
Denote $\S^0=\{p_1,\dots,p_5\}$; then $E_j=\pi^{-1}(p_j)$ is diffeomorphic to $\CP^1$. In addition, 
\begin{enumerate}
\item $\pi_1(\widetilde{X})=\{1\}$.
\item $H^2(\widetilde{X},\RR)= \la \pi^*(\o_{\S_2}), \pi^*(dt\wedge ds), \o_1,\o_2,\o_3,\o_4,\o_5 \ra$,
where $\o_j$ is the Thom class of $E_j$.
\end{enumerate}
\end{proposition}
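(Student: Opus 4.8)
The plan is to read off all three assertions from the local structure of the singularities of $X$ together with the general statements of Proposition~\ref{prop:cohom} and Proposition~\ref{prop:cohom-X}. First I would identify the exceptional fibres. Each $p_j \in \S^0$ is by definition an isolated isotropy point, so its isotropy group $\ZZ_2$ acts freely on $\CC^2 - \{0\}$; by Corollary~\ref{cor:isotropy unitary} the generator is a unitary involution, and freeness forces it to have no eigenvalue $+1$, hence both eigenvalues equal $-1$ and the generator is $-\Id$. Thus every $p_j$ is an ordinary $A_1$ (quadric cone) singularity $\CC^2/\{\pm\Id\}$, whose minimal resolution — the one produced by the procedure of \cite{CFM} — has a single exceptional curve $E_j \cong \CP^1$ of self-intersection $-2$.

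Next I would compute $\pi_1(\widetilde X)$. The key observation is that $\Delta = \emptyset$ in this example: at each point of $\S^1=\{A_0,B_0,A_1,B_1\}$ the two surfaces $S_\p$ and $S^i_\xi$ accumulate, with isotropy groups $\la \p \ra$ and $\la \xi \ra$, and these generate all of $\G = \la \p,\xi\ra \cong \ZZ_2 \x \ZZ_2$; hence $\G^*_x = \G_x$ and $\G_x/\G^*_x = \{1\}$. Therefore the partial resolution $(X,\widehat \cA)$ has the same (simply connected) underlying space as $X$, with singularities only at the five $A_1$ points of $\S^0$. Passing to $\widetilde X$ replaces, near each $p_j$, a contractible cone neighbourhood over the link $S^3/\{\pm\Id\} \cong \RR P^3$ by the resolution $F_j$, which deformation retracts onto $E_j \cong \CP^1$ and is therefore simply connected. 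A van Kampen argument then shows that substituting a simply connected $F_j$ for the contractible cone (both glued along the common link) leaves the fundamental group unchanged, so $\pi_1(\widetilde X) = \pi_1(X) = \{1\}$.

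Finally I would extract $H^2(\widetilde X,\RR)$ from Proposition~\ref{prop:cohom}. With $k=2$, $\Delta = \emptyset$ and $E_{p_j} \cong \CP^1$ the exact sequence reads
$$
0 \to H^2(X) \xrightarrow{\pi^*} H^2(\widetilde X) \xrightarrow{i^*} \bigoplus_{j=1}^5 H^2(\CP^1) \to 0.
$$
By Proposition~\ref{prop:cohom-X} the image of $\pi^*$ is $\la \pi^*\o_{\S_2},\, \pi^*(dt\wedge ds)\ra$, so it only remains to split $i^*$. For this I would use that the Thom class $\o_j$ of $E_j$ is supported in a neighbourhood of $E_j$, whence $i^*\o_j$ vanishes on $E_k$ for $k\neq j$ and restricts to $E_j$ as the Euler class of its normal bundle, i.e. $E_j\cdot E_j = -2$ times the generator of $H^2(E_j,\RR)$. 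As $-2 \neq 0$, the classes $\o_1,\dots,\o_5$ map to a basis of $\bigoplus_j H^2(\CP^1)$ and hence give a section of $i^*$; combined with the injectivity of $\pi^*$ this yields $H^2(\widetilde X,\RR) = \la \pi^*\o_{\S_2},\, \pi^*(dt\wedge ds),\, \o_1,\dots,\o_5\ra$.

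I expect the main obstacle to be the $\pi_1$ step rather than the cohomology count: one must make the van Kampen comparison between the cone and its resolution precise and confirm that the five local resolutions are indeed simply connected. Once $\Delta=\emptyset$ and the $A_1$ model are in hand, both the identification $E_j\cong\CP^1$ and the dimension count $\dim H^2(\widetilde X,\RR) = 2+5 = 7$ follow directly from the results already established.
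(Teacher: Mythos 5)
Your proposal is correct and follows essentially the same route as the paper: both establish $\Delta=\emptyset$ and the local model $\CC^2/\{\pm\Id\}$ with exceptional curve $E_j\cong\CP^1$, both obtain $\pi_1(\widetilde X)=\pi_1(X)=\{1\}$ by a Seifert--Van Kampen comparison of $X$ and $\widetilde X$ glued along the links $S^3/\ZZ_2$, and both split the short exact sequence of Proposition \ref{prop:cohom} (with image of $\pi^*$ given by Proposition \ref{prop:cohom-X}) using that the Thom class $\o_j$ restricts to a non-zero multiple of the generator of $H^2(E_j,\RR)$ because the normal bundle of $E_j$ is non-trivial. Your eigenvalue argument showing the generator of the isotropy is $-\Id$, and your explicit identification of the self-intersection $-2$, are simply more detailed justifications of facts the paper asserts directly.
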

\begin{proof}
First observe that $\Delta=\emptyset$, where $\Delta$ is defined as in
Proposition \ref{prop:cohom}. In addition, if $p \in \S^0$ is an isolated singularity
then $\G_p= \ZZ_2$; the K\" ahler local model around $p$ is necessarily of the form $\CC^2/ \ZZ_2$, with $\ZZ_2= \la \rId, -\rId \ra$. The algebraic resolution of this space is $\widetilde \CC^2/ \ZZ_2$, where $\widetilde \CC^2$ stands for the blow-up of $0$ in $\CC^2$; that is:
$$
\widetilde \CC^2/ \ZZ_2= \{ (v,l)\in \CC^2 \times \CP^1 \mbox{ s.t. } v\in l \}/ (v,l)\sim (-v,l).
$$

We compute $\pi_1(\widetilde{X})$ using Seifert-Van Kampen theorem. Let $B_j^\e$ be an $\e$-ball centered at $p_j$ with $\e$ small enough to ensure that $B_j^\e$ are pairwise disjoint. Let $N_{j}$ be a neighbourhood of a path between $p_j$ and $p_{j+1}$ that does not intersect $B_j^\e$ for $k\neq j, j+1$. 
Define:
$$
 U= \left( \cup_{j=1}^5{B_j}^\e \right) \cup \left( \cup_{j=1}^4{N_j}\right), \qquad V= X-\cup_{j=1}^5 \overline{B_j}^{\frac{\e}{2}}.
$$
The space $U\cap V$ is pathwise connected and has the homotopy type of $\bigvee_{j=1}^5 S^3_j/\ZZ_2$, where we denoted a copy of $S^3$ as $S^3_j$. Its fundamental group is the free product of 5 copies of $\ZZ_2$. Being $U$ contractible, it holds that $1=\pi_1(X)= \pi_1(V)/ i_*(\pi_1(U\cap V))$, with $i \colon U\cap V \to V$.

In addition define $\widetilde{U}= \pi^{-1}(U)$, $\widetilde{V}=\pi^{-1}(V)$. The space $\widetilde U $ has the homotopy type of $\bigvee_{j=1}^5 \CP^1_j$; which is simply connected. Thus, $\pi_1(\widetilde X)= \pi_1(\widetilde V)/ j_*(\pi_1(\widetilde U\cap \widetilde V))$, with $j \colon \widetilde U\cap \widetilde V \to \widetilde V$. Taking into account that $\pi \colon (\widetilde V, \widetilde U \cap \widetilde V) \to (V, U\cap V)$ is an homeomorphism of pairs; this ensures that $\pi_1(\widetilde X)=\pi_1(X)=\{ 1\}$.

We finally compute $H^2(\widetilde X,\RR)$. By Propositions \ref{prop:cohom} and \ref{prop:cohom-X} there is a short exact sequence:
$$
0 \rightarrow \la \o_{\S_2}, dt \wedge ds \ra \xrightarrow{\pi^*} H^2(\widetilde{X},\RR) \xrightarrow{i^*}  \sum_{j=1}^5 H^2(E_j,\RR) \rightarrow 0.
$$

The restriction of $\o_j$ to $E_j$ is a volume form of $E_j$ because the bundle $\widetilde \CC^2 \to \CP^1$ is non-trivial. This yields a splitting:
$
 i^*(\o_j) \longmapsto \o_j.
$
This finishes the proof.

\end{proof}

\end{document}